\theoremstyle{plain}
\newtheorem{thm}{Theorem}[section]
\newtheorem{lem}[thm]{Lemma}
\newtheorem{prop}[thm]{Proposition}
\newtheorem{cor}[thm]{Corollary}
\theoremstyle{definition}
\newtheorem{defn}[thm]{Definition}
\newtheorem{rem}[thm]{Remark}
\newcommand{\ri}{\mathfrak{o}}
\newcommand{\mi}{\mathfrak{p}}
\newcommand{\rad}{\mathfrak{a}}
\newcommand{\seqrad}{\mathfrak{a}}
\newcommand{\Z}{\mathbf{Z}}
\newcommand{\C}{\mathbf{C}}
\newcommand{\He}{\mathcal{H}}
\newcommand{\Irr}{\mathrm{Irr}}
\newcommand{\rU}{\mathrm{U}}
\newcommand{\Lamdba}{\Lambda}
\newcommand{\ad}{\mathrm{ad}}
\newcommand{\Ad}{\mathrm{Ad}}
\newcommand{\e}{\sqrt{\varepsilon}}
\newcommand{\p}{\varpi}
\newcommand{\J}{\mathfrak{J}}
\title{Representations of non quasi-split unramified $\rU(4)$ over 
a $p$-adic field I:
Representations of non-integral level}
\author{Michitaka Miyauchi}
\keywords{$p$-adic group, unitary group, Hecke algebra}
\subjclass[2010]{22E50}
\address{
Department of Mathematics, Faculty of Science, Kyoto University, 
Oiwake Kita-Shirakawa Sakyo Kyoto 606-8502 JAPAN
}
\email{miyauchi@math.kyoto-u.ac.jp}
\begin{document}

\maketitle

\pagestyle{myheadings}
\markboth{MICHITAKA MIYAUCHI}{REPRESENTATIONS OF 
NON QUASI-SPLIT $\rU(4)$ OVER A $p$-ADIC FIELD I}

\begin{abstract}
Let $F_0$ be a non-archimedean local field of 
odd residual characteristic
and let $G$ be the non quasi-split unramified unitary group 
in four variables defined over $F_0$.
In this paper,
we give a classification
 of the irreducible smooth representations of $G$
of non-integral level
using the Hecke algebraic method developed by Howe and Moy.
\end{abstract}

\section*{Introduction}
Let $F_0$ be a non-archimedean local field of 
odd residual characteristic
and let $G$ be the non quasi-split unramified unitary group
in four variables defined over $F_0$.
Konno \cite{Konno}
classified the non supercuspidal representations 
of $G$ modulo supercuspidal representations of proper Levi
subgroups of $G$,
by the method which Sally and Tadi\'c
\cite{Sally_Tadic} used for $\mathrm{GSp}(4)$.
Note that her result does not depend on whether 
$G$ is unramified over $F_0$ or not.
In this paper,
we give a classification
 of the irreducible smooth representations of $G$
of non-integral level
using the Hecke algebraic approach 
developed by Moy \cite{GSp4} for $\mathrm{GSp}(4)$.

One of the purposes of this paper is 
a classification of the supercuspidal representations of $G$.
Stevens \cite{St5} proved that 
every irreducible supercuspidal representation of 
a $p$-adic classical group
is irreducibly induced from 
an open compact subgroup when $p$ is odd.
Although
the supercuspidal representations of $G$
are exhausted by his construction,
the classification has not been completed.

The other purpose of this research is 
Jacquet-Langlands correspondence
of $p$-adic unramified unitary groups in four variables.
By the philosophy of Langlands program,
there seems to be  a certain correspondence of 
discrete series representations 
between $G$ and unramified $\mathrm{U}(2,2)$ over $F_0$.
In this paper and \cite{U22},
we can find same phenomenons on supercuspidal representations
of those two groups.

In \cite{U21} and \cite{GSp4},
Moy gave a classification of 
the irreducible smooth representations 
of unramified $\rU(2,1)$ and $\mathrm{GSp}(4)$ over $F_0$,
based on the concepts of
nondegenerate representations
and Hecke algebra isomorphisms.
A nondegerate representation of $\mathrm{GSp}(4)$
is an irreducible representation $\sigma$ 
of an open compact subgroup 
which satisfies a certain cuspidality or semisimplicity condition.
An important property of nondegenerate representations 
of $\mathrm{GSp}(4)$ is that
every irreducible smooth representation of $\mathrm{GSp}(4)$
contains some nondegenerate representation.
For $\sigma$ a nondegenerate representation of
$\mathrm{GSp}(4)$,
the set of equivalence classes of irreducible representations
of $\mathrm{GSp}(4)$
which contain $\sigma$
can be identified with
the set of equivalence classes of irreducible representations of the Hecke algebra $\He$ associated
to $\sigma$.
Moy described $\He$ as a Hecke algebra of
some smaller group
and 
reduced the classification of 
the irreducible smooth representations of $\mathrm{GSp}(4)$ which contain
$\sigma$
to that of a smaller group.

In \cite{MP},
Moy and Prasad developed
the concept of nondegenate representations 
into that of unrefined minimal $K$-types for reductive 
$p$-adic groups.
For classical groups,
Stevens \cite{St3} gave an explicit construction of 
unrefined minimal $K$-types as fundamental skew strata
in terms of the lattice theory by Bushnell and Kutzko \cite{BK2}
and Morris \cite{Morris-1}.

Throughout this paper,
we use the notion of fundamental skew strata introduced by
\cite{St3} for our nondegenerate representations of $G$.
We give a brief summary of this
in Section \ref{sec:preliminaries}.
Let $F$ be the unramified quadratic extension over $F_0$.
Then $G$ is realized as the group of isometries
of an $F/F_0$-hermitian form on 4-dimensional 
$F$-vector space $V$.
According to \cite{St3},
a skew stratum 
is a 4-tuple $[\Lambda, n, r, \beta]$.
A periodic lattice function $\Lambda$ with a certain duality
induces
a filtration $\{P_{k}(\Lambda)\}_{k \geq 1}$ on 
a parahoric subgroup $P_{0}(\Lambda)$ of $G$.
Integers $n > r \geq 0$ 
and an element $\beta$ in the Lie algebra of $G$
determine a character $\psi$ of the group $P_{r+1}(\Lambda)$
which is trivial on $P_{n+1}(\Lambda)$.
Writing $e(\Lambda)$ for the period of $\Lambda$,
we refer to $n/e(\Lambda)$ as the level of the stratum.
A skew stratum $[\Lambda, n, r, \beta]$ is called fundamental
if $\beta$ is not nilpotent modulo $p$,
and semisimple if $\beta$ is semisimple and satisfy
a good property on the adjoint action.
Every irreducible smooth representation of $G$ of positive level
contains 
a character $\psi$ induced by a fundamental skew stratum,
and the level of the fundamental skew strata contained in 
an irreducible smooth representation $\pi$ of $G$ is an invariant of
$\pi$.
We refer to it as the level of $\pi$.

In section \ref{section:Moy},
we give a generalization of a result of Moy \cite{U21} and 
\cite{GSp4}.
For a skew semisimple stratum $[\Lambda, n, n-1, \beta]$
with tamely ramified algebra $F[\beta]$ over $F$,
we construct an irreducible representation $\rho$ of 
an open compact subgroup $J$ with the following two 
properties:

\begin{enumerate}
\item[(i)] An irreducible smooth representation of $G$
contains the skew stratum $[\Lambda, n, n-1, \beta]$
if and only if it contains $\rho$;

\item[(ii)] Writing $G_E$ for the $G$-centralizer of $\beta$,
the intertwining of $(J, \rho)$ equals to $J G_E J$.
For $g \in G_E$,
we have 
$J g J\cap G_E = (J\cap G_E)g (J\cap G_E)$.
\end{enumerate}

From the second property,
we guess the Hecke algebra associated to the pair $(J, \rho)$
is isomorphic to a certain Hecke algebra
of $G_E$.
It is true when $G_E$ is compact 
like in many cases in \cite{U21} and \cite{GSp4}.
But in general,
to construct Hecke algebra isomorphisms,
we need to know at least 
the structure of Hecke algebras of $p$-adic classical
groups associated to Iwahori higher congruence subgroups.

From Section~\ref{strata},
we start to classify the irreducible smooth representations of $G$.
First,
we prove 
a rigid result on the existence of fundamental skew strata,
which implies that this property holds for 
a set of skew strata with finite $G$-orbits modulo $p$.
Based on this,
we replace fundamental skew strata with semisimple ones,
case by case.

In Sections~\ref{sec:over4} and \ref{sec:r_3},
we classify representations of $G$ of level $n/4$
and $n/3$, respectively.
All of these representations are supercuspidal.
The algebra $F[\beta]$ of a semisimple stratum of level $n/3$ 
possesses a simple component whose ramification index is 3.
Although 
we can't apply the Moy's construction to this case when $p$ is 3,
the method by Stevens \cite{St2} and \cite{St1}
to construct supercuspidal representations from maximal compact tori does work well.

In Section~\ref{sec:h_int},
we give a classification of the irreducible smooth representations
of $G$ of half-integral level.
First,
we replace fundamental skew strata with semisimple ones.
According to the form of the centralizer $G_E$,
there are three kinds of such strata.
In two cases, $G_E$ is compact
and in the other case $G_E$ is isomorphic to 
a product of unramified
$\rU(1,1)$ over $F_0$ and unramified $\rU(1)$ over a quadratic ramified 
extension over $F_0$.
In the non compact case,
we establish a Hecke algebra isomorphism 
by checking the relations of elements of the corresponding Hecke algebra.

In Section~\ref{comparison},
we compare semisimple skew strata for $G$
of non-integral level 
with those for unramified $\rU(2,2)$ over $F_0$
in \cite{U22}.
In the point of view of Hecke algebraic method,
behavior of 
supercuspidal representations of those two groups of non-integral 
level are quite similar.
In fact,
the value of level and characteristic polynomials are same
and the difference of the form of $G_E$ relates to 
inner forms.

\section{Preliminaries}\setcounter{equation}{0}\label{sec:preliminaries}
In this section, we recall the notion of fundamental skew strata
for $p$-adic classical groups
from \cite{BK2} and \cite{St2}.
We refer the reader to those papers 
for more details.
\subsection{Filtrations}
Let $F$ be a non-archimedean local field
of odd residual characteristic equipped with a 
(possibly trivial) Galois involution ${}^-$,
and let $\ri_F$ denote the ring of integers in $F$,
$\mi_F$ the maximal ideal in $\ri_F$,
$k_F = \ri_F/\mi_F$ the residue field.

Let $F_0$ denote the ${}^-$-fixed subfield of $F$.
We denote by
$\ri_0$, $\mi_0$, $k_0$ the objects for $F_0$ 
analogous to those above for $F$,
and by $q$ the number of elements in $k_0$.

We select a uniformizer $\p_F$ in $F$ so that
$\overline{\p_F} = -\p_F$ if $F/F_0$ is ramified.
Otherwise we take $\p_F$ in $F_0$.

Let $V$ be an $N$-dimensional $F$-vector space 
equipped with a nondegenerate hermitian form $h$
with respect to $F/F_0$.
We put
$A = \mathrm{End}_F(V)$,
$\widetilde{G} = A^\times$
and denote by
$\sigma$ the involution on $A$ induced by $h$.
We also put
$G= \{ g \in \widetilde{G}\ |\ g\sigma(g) = 1 \}$,
the corresponding classical group over $F_0$,
$A_- = \{ X \in A\ |\ X+\sigma(X)=0 \}
\simeq \mathrm{Lie}(G)$.

Recall from \cite{BK2} (2.1) that
an $\ri_F$-lattice sequence in $V$ is a function $\Lambda$ from $\mathbf{Z}$
to the set of $\ri_F$-lattices in $V$ such that
\begin{enumerate}
\item[(i)] $\Lambda(i) \supset \Lambda(i+1)$ for all $i \in \mathbf{Z}$;

\item[(ii)]
there exists an integer $e(\Lambda)$ 
such that
$\p_F \Lambda(i) = \Lambda(i+e(\Lambda))$ for all $i \in \mathbf{Z}$.
\end{enumerate}
\noindent
The integer $e(\Lambda)$
is called the $\ri_F$-period of $\Lambda$.
We say that
an $\ri_F$-lattice sequence $\Lambda$ is strict 
if 
$\Lambda(i) \supsetneq \Lambda(i+1)$ for all $i \in \Z$.

For $L$ an $\ri_F$-lattice in $V$,
we define its dual lattice $L^\#$ by
$L^\# = \{ v \in V\ |\ h(v, L) \subset \ri_F\}$.
Recall from \cite{St2} Section 3 that
an $\ri_F$-lattice sequence $\Lambda$ in $V$ is called
self-dual if there exists an integer $d(\Lambda)$
such that
$\Lambda(i)^\# = \Lambda(d(\Lambda)-i)$
for all $i \in \Z$. 

Recall from \cite{Kariyama}  \S 1.4 that
a $C$-sequence in $V$ is 
a self-dual $\ri_F$-lattice sequence $\Lambda$ in $V$
which satisfies
\begin{enumerate}
\item[$C$(i)] 
$\Lambda(2i+1) \supsetneq \Lambda(2i+2)$  for all $i \in \Z$,

\item[$C$(ii)] $e(\Lambda)$ is even and $d(\Lambda)$ is odd.
\end{enumerate}
\noindent
We remark that 
this is a realization of a $C$-chain in \cite{Morris-1} \S 4.3
as an $\ri_F$-lattice sequence.

An $\ri_F$-lattice sequence $\Lambda$ in $V$
induces
a filtration $\{\rad_n(\Lambda)\}_{n \in \Z}$ on $A$ by
\begin{eqnarray*}
\mathfrak{a}_n(\Lambda)
= \{ X \in A\ |\ X\Lambda(i) \subset \Lambda(i+n)\ \mathrm{for\ all}\ i \in 
\mathbf{Z}\},\ n \in \Z.
\end{eqnarray*}
Note that an $\ri_F$-lattice sequence 
$\Lambda$ in $V$ is self-dual if and only if
$\sigma(\seqrad_n(\Lambda)) = \seqrad_n(\Lambda)$,
$n \in \Z$. 
This filtration 
determines a kind of \lq\lq valuation"
$\nu_\Lambda$ on $A$ by
\[
\nu_\Lambda(x) = \sup\{ n \in \Z\ |\ x \in \seqrad_n(\Lambda)\},\ 
x \in A\backslash \{0\},
\]
with the usual understanding that
$\nu_\Lambda(0) = \infty$.

Let $\Lambda$ be an $\ri_F$-lattice sequence in $V$.
For $k \in \Z$,
we define an $\ri_F$-lattice sequence $\Lambda +k$ by
$(\Lambda +k)(i) = \Lambda(i+k)$, $i \in \Z$.
Then we have $\seqrad_n(\Lambda) = \seqrad_{n}(\Lambda +k)$, $n \in \Z$.
We refer to $\Lambda + k$ as a translate of $\Lambda$.
For $g \in G$,
we define an $\ri_F$-lattice sequence $g\Lambda$ by
$(g\Lambda)(i) = g\Lambda(i)$, $i \in \Z$.
Note that if $\Lambda$ is self-dual,
then $\Lambda + k$ and $g \Lambda$ are also self-dual.

For $\Gamma$ an $\ri_F$-lattice in $A$,
we define its dual $\Gamma^*$ by
$\Gamma^*  =  \{ X \in A\ |\ \mathrm{tr}_{A/F_0}(X\Gamma) \subset \mi_0\}$,
where $\mathrm{tr}_{A/F_0}$ denotes the composition of traces
$\mathrm{tr}_{F/F_0} \circ \mathrm{tr}_{A/F}$.
Recall from \cite{BK2} (2.10) that,
if $\Lambda$ is an $\ri_F$-lattice sequence in $V$,
then we have 
$\seqrad_n(\Lambda)^* = \seqrad_{1-n}(\Lambda)$,
$n \in \Z$.

For $S$ a subset of $G$, 
we write $S_- = S \cap A_-$.
Let $\Lambda$ be a self-dual $\ri_F$-lattice sequence
in $V$.
We  set
$P_{0}(\Lambda) = G\cap \seqrad_0(\Lambda)$ and
$P_{n}(\Lambda) = G \cap (1 + \seqrad_n(\Lambda))$,
for $n \in \Z$, $n \geq 1$.

We fix an additive character
$\psi_0$ of $F_0$ with conductor $\mi_0$.
Let 
${}^\wedge$ denote the Pontrjagin dual.
For $x$ a real number, 
we write $[x]$ for the greatest integer less than or equal to $x$.
\begin{prop}\label{thm:Morris}
\label{prop:p}
Let $\Lambda$ be a self-dual $\ri_F$-lattice sequence in  
$V$ and 
let 
$n, r \in \Z$ satisfy
$n > r \geq [n/2] \geq 0$.
Then
the map $x \mapsto x-1$ induces an
isomorphism
$P_{r+1}(\Lambda)/P_{n+1}(\Lambda) \simeq 
\rad_{r+1}(\Lambda)_-/\rad_{n+1}(\Lambda)_-$ and
there exists an isomorphism of finite abelian groups
\[
\rad_{-n}(\Lambda)_-/\rad_{-r}(\Lambda)_- \simeq
(P_{r+1}(\Lambda)/P_{n+1}(\Lambda))^\wedge;
b+ \rad_{-r}(\Lambda)_- \mapsto \psi_b,
\]
where
$\psi_b(x) =
\psi_0( \mathrm{tr}_{A/F_0}(b(x-1)))$, 
$x \in P_{r+1}(\Lambda)$.
\end{prop}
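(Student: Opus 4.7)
The plan is to prove the two assertions using the range condition $r \geq [n/2]$, which gives $2(r+1) \geq n+1$, together with the hypothesis of odd residual characteristic, which makes $\tfrac{1}{2} \in \ri_0$ and the skew-symmetrization $X \mapsto \tfrac{1}{2}(X - \sigma(X))$ available throughout the filtration.

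For the first isomorphism, I would send $x = 1 + X \in P_{r+1}(\Lambda)$ to the class of $X$. The unitarity condition $x\sigma(x) = 1$ rewrites as $X + \sigma(X) = -X\sigma(X) \in \rad_{2(r+1)}(\Lambda) \subset \rad_{n+1}(\Lambda)$, so the class of $X$ modulo $\rad_{n+1}(\Lambda)$ agrees with that of its skew part $\tfrac{1}{2}(X - \sigma(X)) \in \rad_{r+1}(\Lambda)_-$, and the map lands in $\rad_{r+1}(\Lambda)_-/\rad_{n+1}(\Lambda)_-$. Multiplicativity modulo $\rad_{n+1}(\Lambda)$ follows from $(1+X)(1+Y) - 1 = X + Y + XY$ with $XY \in \rad_{2(r+1)}(\Lambda)$, and injectivity is immediate from the definition $P_{n+1}(\Lambda) = G \cap (1 + \rad_{n+1}(\Lambda))$. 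For surjectivity I would produce a preimage via the Cayley transform $c(Y) = (1 - Y/2)^{-1}(1 + Y/2)$ for $Y \in \rad_{r+1}(\Lambda)_-$: the inverse converges in $1 + \rad_{r+1}(\Lambda)$, the skew condition $\sigma(Y) = -Y$ forces $c(Y)\sigma(c(Y)) = 1$, and direct expansion shows $c(Y) - 1 \equiv Y \pmod{\rad_{2(r+1)}(\Lambda)}$.

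For the second isomorphism, I would first use part (a) to identify $P_{r+1}(\Lambda)/P_{n+1}(\Lambda)$ with $\rad_{r+1}(\Lambda)_-/\rad_{n+1}(\Lambda)_-$ and translate $\psi_b$ into the linear character $Y \mapsto \psi_0(\mathrm{tr}_{A/F_0}(bY))$ on this abelian group. The key input is that $\mathrm{tr}_{A/F_0} \circ \sigma = \mathrm{tr}_{A/F_0}$ together with $p \neq 2$ forces the symmetric and skew subspaces of $A$ to be mutually orthogonal under the trace pairing; combined with the lattice-level duality $\rad_m(\Lambda)^\ast = \rad_{1-m}(\Lambda)$ from \cite{BK2}~(2.10), the annihilator of $\rad_{n+1}(\Lambda)_-$ inside $A_-$ is exactly $\rad_{-n}(\Lambda)_-$, and within this the annihilator of $\rad_{r+1}(\Lambda)_-$ is $\rad_{-r}(\Lambda)_-$. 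Hence $b \mapsto \psi_b$ descends to an injection $\rad_{-n}(\Lambda)_-/\rad_{-r}(\Lambda)_- \hookrightarrow (P_{r+1}(\Lambda)/P_{n+1}(\Lambda))^\wedge$, and surjectivity follows by cardinality comparison using perfectness of the trace pairing on the finite skew quotients.

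The main obstacle is this last perfectness: I need the $\sigma$-eigenspace decomposition to respect each $\rad_m(\Lambda)$ (straightforward once $p \neq 2$ permits $X = \tfrac{1}{2}(X + \sigma(X)) + \tfrac{1}{2}(X - \sigma(X))$ within the filtration, using self-duality of $\Lambda$ to see that $\sigma$ preserves $\rad_m(\Lambda)$), and then to deduce a perfect duality between the $A_-$-parts of complementary layers from the unrestricted duality above; the orthogonality of the symmetric and skew summands under $\mathrm{tr}_{A/F_0}$ is the glue that makes the two ingredients combine into the stated pairing.
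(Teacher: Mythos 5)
Your proposal is correct. The paper states this proposition without giving a proof, treating it as a standard result in the lattice-filtration framework of Morris and Bushnell--Kutzko, and the argument you supply is precisely the standard one: the Cayley transform $Y \mapsto (1-Y/2)^{-1}(1+Y/2)$ for surjectivity of $x \mapsto x-1$ (available because $p$ is odd and the commutator/product terms fall into $\rad_{2(r+1)}(\Lambda) \subset \rad_{n+1}(\Lambda)$ when $r \geq [n/2]$), and for the duality the combination of $\rad_m(\Lambda)^* = \rad_{1-m}(\Lambda)$ with the $\mathrm{tr}_{A/F_0}$-orthogonality of the $\pm 1$-eigenspaces of $\sigma$, which (again using $p \neq 2$ to split each $\rad_m(\Lambda)$ as $\rad_m(\Lambda)_+ \oplus \rad_m(\Lambda)_-$) restricts the perfect pairing on full lattice quotients to a perfect pairing on the skew quotients.
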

\subsection{Skew strata}
\begin{defn}[\cite{BK2} (3.1), \cite{St2} Definition 4.5]
(i) A stratum in $A$ is a 4-tuple $[\Lambda, n, r, \beta]$
consisting of an $\ri_F$-lattice sequence $\Lambda$ in $V$,
integers $n, r $ such that $n > r \geq 0$, 
and an element $\beta$ in $\seqrad_{-n}(\Lambda)$.
We say that two strata $[\Lambda, n, r, \beta_i]$, $i = 1,2$,
are equivalent if $\beta_1 \equiv \beta_2 
\pmod{\seqrad_{-r}(\Lambda)}$.

(ii) A stratum $[\Lambda, n, r, \beta]$ in $A$ is called skew
if $\Lambda$ is self-dual and $\beta \in \rad_{-n}(\Lambda)_-$.
\end{defn}

The fraction 
$n/e(\Lambda)$ is called the level of the stratum.
If $n > r \geq [n/2]$,
then by
Proposition~\ref{thm:Morris},
an equivalence class of skew strata $[\Lambda, n, r, \beta]$ corresponds to a character $\psi_\beta$ of
$P_{r+1}(\Lambda)/P_{n+1}(\Lambda)$.

For $g \in \widetilde{G}$ and $x \in A$,
we write $\Ad(g)(x) = gxg^{-1}$.
We define the formal intertwining of a skew stratum
$[\Lambda, n, r, \beta]$ to be
\[
I_{G}[\Lambda, n, r, \beta]
= \{ g \in G\ |\ (\beta + 
\rad_{-r}(\Lambda)_-)
\cap \Ad(g)(\beta + \rad_{-r}(\Lambda)_-) \neq \emptyset \}.
\]
If $n > r \geq [n/2]$, then it is just
the intertwining of the character $\psi_\beta$ of $P_{r+1}(\Lambda)$
in $G$.

For
$[\Lambda, n, n-1, \beta]$ a stratum in $A$,
we set
$y_\beta = \p_F^{n/k} \beta^{e(\Lambda)/k} \in \seqrad_0(\Lambda)$,
where
$k = (e(\Lambda), n)$.
Then the characteristic polynomial 
$\Phi_\beta(X)$ of $y_\beta$
lies in $\ri_F[X]$.
We define the characteristic polynomial 
$\phi_\beta(X) \in k_F[X]$ of the stratum 
to be the reduction  modulo $\mi_F$ of $\Phi_\beta(X)$.
\begin{defn}[\cite{BK1} (2.3)]
A
stratum $[\Lambda, n, r, \beta]$ in $A$ is called fundamental 
if
$\phi_\beta(X) \neq X^N$.
\end{defn}

Let 
$\pi$ be a smooth representation of $G$
and $[\Lambda, n, r, \beta]$
a skew stratum with $n > r \geq [n/2]$.
We say that $\pi$ contains 
$[\Lambda, n, r, \beta]$ if 
the restriction of $\pi$ to 
$P_{r+1}(\Lambda)$ contains the corresponding character
$\psi_\beta$.
A smooth representation $\pi$ of $G$
is called of positive level
if $\pi$ has no non-zero $P_{1}(\Lambda)-$fixed vector,
for $\Lambda$ any self-dual $\ri_F$-lattice sequence in $V$.

Let $\pi$ be an irreducible smooth representation of $G$
of positive level.
Due to \cite{St3} Theorem 2.11,
$\pi$
contains a fundamental skew stratum $[\Lambda, n, n-1, \beta]$.
Let $[\Lambda, n, n-1, \beta]$ be any skew stratum 
contained in $\pi$.
Then by the philosophy of minimal $K$-types,
$[\Lambda, n, n-1, \beta]$ is fundamental
if and only if the level $n/e(\Lambda)$ of the stratum 
is smallest among in those of skew strata occurring in $\pi$.
Thus we can define the level of $\pi$ by 
the level of the fundamental skew strata contained in $\pi$.

Recall that when skew strata $[\Lambda, n, n-1, \beta]$
and $[\Lambda', n', n'-1, \beta']$
are contained in an irreducible smooth representation of $G$,
there exists $g \in G$ such that
\begin{eqnarray}
(\beta + \rad_{1-n}(\Lambda)_-)\cap 
\Ad(g)(\beta' + \rad_{1-n'}(\Lambda')_-) \neq \emptyset.
\end{eqnarray}
We therefore see that
for an irreducible smooth representation $\pi$ of $G$ of positive level,
the characteristic polynomial of a fundamental skew stratum 
contained in $\pi$ depends only on $\pi$.
We refer to it as the characteristic polynomial of $\pi$.

\subsection{Semisimple strata}
\begin{defn}[\cite{BK1} (1.5.5), \cite{BK2} (5.1)]
A stratum 
$[\Lambda, n, r, \beta]$ in $A$ is called simple if
\begin{enumerate}
\item[(i)] the algebra $E = F[\beta]$ is a field,
and $\Lambda$ is an $\ri_E$-lattice sequence;

\item[(ii)] $\nu_\Lambda(\beta) = -n$;

\item[(iii)] $\beta$ is minimal over $F$ in the sense of \cite{BK1} p. 41.
\end{enumerate}
\end{defn}
\begin{rem}
The definition above is a special case of 
that in \cite{BK1}.
With the notion of \cite{BK1},
our simple stratum 
is a simple stratum $[\Lambda, n, r, \beta]$
with $k_0(\beta, \Lambda) = -n$.
\end{rem}

The following criterion of the simplicity of strata
is well known.
\begin{prop}[\cite{char_l} Proposition 1.5]\label{prop:pure}
Let
$\Lambda$ be a strict $\ri_F$-lattice sequence in $V$ 
with $e(\Lambda) = N$
and let $n$ be an integer such that $(n, N) = 1$.
Suppose that
$[\Lambda, n, r, \beta]$ is a fundamental stratum in $A$.
Then $F[\beta]$ is a totally ramified extension of degree $N$
over $F$ and
$[\Lambda, n, r, \beta]$ is simple.
\end{prop}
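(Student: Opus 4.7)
The plan is to extract the field structure of $E := F[\beta]$ from the fundamentality hypothesis, using $\gcd(n,N)=1$ crucially to relate the lattice filtration $\{\seqrad_m(\Lambda)\}$ to the $F$-valuation on $E$. Because $\Lambda$ is strict with $e(\Lambda) = N = \dim_F V$, each quotient $\Lambda(i)/\Lambda(i+1)$ is one-dimensional over $k_F$, so $\seqrad_0(\Lambda)/\seqrad_1(\Lambda) \cong k_F^N$ (a product of $N$ copies of $k_F$) and every graded piece of $\operatorname{gr}\seqrad_0(\Lambda) = \bigoplus_m \seqrad_m(\Lambda)/\seqrad_{m+1}(\Lambda)$ has $k_F$-dimension $N$. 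The image of $y_\beta = \p_F^n\beta^N$ in $\seqrad_0(\Lambda)/\seqrad_1(\Lambda)$ is $\bar{\p}_F^{\,n}\bar\beta^N$, so the fundamentality condition $\phi_\beta \neq X^N$ is equivalent to the nonvanishing of this image and hence to $\bar\beta^N \neq 0$ in the graded ring. This forces $\nu_\Lambda(\beta) = -n$ and moreover $\bar\beta^m \neq 0$ for every $1 \le m \le N$, since any such vanishing would propagate via $\bar\beta^N = \bar\beta^m \bar\beta^{N-m}$.

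The heart of the proof is showing $[E:F] = N$. Write $d := [E:F]$ and let $f(X) = X^d + \sum_{j=0}^{d-1} a_j X^j$ be the minimal polynomial of $\beta$ over $F$, giving $\beta^d = -\sum a_j\beta^j$ in $A$. Since each $a_j \in F$ acts as a scalar, $\nu_\Lambda(a_j\beta^j) = N v_F(a_j) - jn$; for such a term to contribute a nonzero class in graded degree $-dn$, we need $N v_F(a_j) = (j-d)n$, which by $\gcd(n,N)=1$ forces $N \mid (d-j)$. If $d < N$, no admissible $j \in \{0, \ldots, d-1\}$ exists, so the identity collapses in the graded ring to $\bar\beta^d = 0$, contradicting the non-nilpotence from Step 1. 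Combined with the trivial bound $d \leq N$ (the minimal polynomial divides the degree-$N$ characteristic polynomial of $\beta$ on $V$), this yields $d = N$, and irreducibility of $f$ shows $E$ is a field.

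For the ramification assertion, compare the two discrete valuations $\nu_\Lambda|_E$ and the unique extension $v_E$ of $v_F$ to $E$: writing $\nu_\Lambda|_E = c\,v_E$ for a positive integer $c$, evaluation at $\p_F$ gives $c\cdot e(E/F) = N$ (so $c \mid N$) and evaluation at $\beta$ gives $c\cdot v_E(\beta) = -n$ (so $c \mid n$); therefore $c = \gcd(n,N) = 1$, forcing $e(E/F) = N = [E:F]$, i.e.\ total ramification of degree $N$. Simplicity then closes: condition (ii) is Step 1; condition (i) follows because $\ri_E = \{x \in E : v_E(x) \ge 0\} \subset \seqrad_0(\Lambda)$, so $\Lambda$ is automatically an $\ri_E$-lattice sequence; condition (iii), minimality of $\beta$ over $F$ in the sense of \cite{BK1} p.~41, reduces in the totally ramified case to the arithmetic condition $\gcd(-v_E(\beta), e(E/F)) = \gcd(n,N) = 1$, which is the hypothesis.

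The main obstacle is the degree computation in the middle paragraph: one must promote the single non-nilpotence datum about $y_\beta$ to the global algebraic statement $[F[\beta]:F] = N$, and this rests on careful graded-ring bookkeeping that uses $\gcd(n,N)=1$ in an essential way to rule out any shorter algebraic relation on $\beta$.
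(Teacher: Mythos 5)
Your Step 1 analysis is correct: for a strict $\Lambda$ with $e(\Lambda) = N = \dim_F V$, the hereditary order $\rad_0(\Lambda)$ is an Iwahori order, the quotients $\Lambda(i)/\Lambda(i+1)$ are one-dimensional, and $\phi_\beta(X) \neq X^N$ is indeed equivalent to $\bar\beta^N \neq 0$ in the graded ring. Your degree computation, modulo a small case one should mention (a priori the minimum of $\nu_F(a_j)N - jn$ over nonzero terms could be strictly less than $-dn$ and achieved by two indices both $< d$; this is ruled out the same way and deserves a sentence), correctly yields $[F[\beta]:F] = N$. The verification of conditions (i)--(iii) in the definition of a simple stratum is fine once the ramification statement is in hand.

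The genuine gap is the sentence ``this yields $d = N$, and irreducibility of $f$ shows $E$ is a field.'' You assert irreducibility of $f$, but nowhere prove it, and $[F[\beta]:F] = N = \dim_F V$ alone does \emph{not} force $F[\beta]$ to be a field (think of $\beta$ diagonal with $N$ distinct eigenvalues in $F$). The very next paragraph then presupposes the missing fact twice: you invoke ``the unique extension $v_E$ of $v_F$ to $E$'' (which only makes sense if $E$ is a field) and you write ``$\nu_\Lambda|_E = c\,v_E$'' (which requires $\nu_\Lambda$ to be \emph{additive} on $E$, not merely sub-additive as the definition gives). Both of these need the hypothesis $(n,N) = 1$ in an essential way, and neither follows formally from $d = N$. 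This is not a cosmetic omission: it is precisely the place where the coprimality hypothesis does its second job.

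The missing step can be supplied by analyzing the graded object of $E$ with respect to $\nu_\Lambda$. The elements $1, \bar\beta, \ldots, \bar\beta^{N-1}$ occupy graded degrees $0, -n, \ldots, -(N-1)n$, which are pairwise distinct modulo $N$ because $(n,N) = 1$; together with $\bar\p_F^{\pm 1}$ in degree $\pm N$ and the relation $\bar\beta^N = u\,\bar\p_F^{-n}$ for some $u \in k_F^\times$ (both sides are nonzero elements of a one-dimensional graded piece), one sees that $\bigoplus_m(\rad_m(\Lambda)\cap E)/(\rad_{m+1}(\Lambda)\cap E)$ is isomorphic to a Laurent polynomial ring $k_F[t,t^{-1}]$ with $t$ in degree $-1$ --- in particular a domain. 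It follows that $\nu_\Lambda$ is additive on $E\setminus\{0\}$, so $E$ has no zero-divisors; being a finite-dimensional commutative $F$-algebra that is a domain, $E$ is a field. Equivalently, one can argue from the cyclic structure: since $(n,N)=1$, $\bar\beta$ acts on $\bigoplus_i\Lambda(i)/\Lambda(i+1)$ as an invertible cyclic permutation of the $N$ one-dimensional pieces, so any nontrivial idempotent $e \in F[\beta]$ would split $V$ in a way incompatible with $e V \cap \Lambda(i)$ hitting every graded degree (Nakayama) or none (forcing $eV = 0$). Either argument closes the gap; without one of them, the ``ramification assertion'' paragraph does not stand.
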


Let $[\Lambda, n, r, \beta]$ be a stratum in $A$.
We assume that there is a non-trivial $F$-splitting $V = V^1 \oplus V^2$ such that
\begin{enumerate}
\item[(i)]
$\Lambda(i) = \Lambda^1(i) \oplus \Lambda^2(i)$ for all
$i \in \Z$,
where $\Lambda^j(i) = \Lambda(i) \cap V^j$, for $j = 1, 2$;

\item[(ii)] $\beta V^j \subset V^j$ for $j =1,2$.
\end{enumerate}
\noindent
For $j = 1,2$, we write $\beta_j = \beta|_{V^j}$.
By \cite{BK2} (2.9),  we get
a stratum $[\Lambda^j, n, r, \beta_j]$
 in $\mathrm{End}_F(V^j)$.
Recall from \cite{BK2} (3.6) that
a stratum $[\Lambda, n, r, \beta]$ in $A$ is called split if
\begin{enumerate}
\item[(iii)] $\nu_{\Lambda^1}(\beta_1) = -n$ and
$X$ does not divide $\phi_{\beta_1}(X)$;

\item[(iv)] either $\nu_{\Lambda^2}(\beta_2) > -n$, or else all the following 
conditions hold:
\begin{enumerate}
\item[(a)] $\nu_{\Lambda^2}(\beta_2) = -n$ and
$X$ does not divide $\phi_{\beta_2}(X)$,

\item[(b)] $(\phi_{\beta_1}(X), \phi_{\beta_2}(X)) = 1$.
\end{enumerate}
\end{enumerate}
\begin{defn}[\cite{St2} Definition 4.8, \cite{St3} Definition 2.10]
(i) (Inductive definition on the dimension of $V$)
 A stratum $[\Lambda, n, r, \beta]$ is called semisimple
if it is simple, or else
it is split as above
and satisfies the following conditions:
\begin{enumerate}
\item[(a)] $[\Lambda^1, n, r, \beta_1]$ is simple;

\item[(b)]
$[\Lambda^2, n_2, r, \beta_2]$ is semisimple 
or $\beta_2 = 0$, where
$n_2 = \mathrm{max}\{-\nu_{\Lambda^2}(\beta_2),
r+1\}$.
\end{enumerate}

(ii)
A skew stratum in $A$ is called split
if it is split with respect to an orthogonal $F$-splitting
$V = V^1 \bot V^2$.
\end{defn}

If a skew stratum $[\Lambda, n, r, \beta]$ is split
with respect to $V = V^1 \bot V^2$,
then 
$\Lambda^j$ is a self-dual $\ri_F$-lattice sequence
in $(V^j, h|_{V^j})$ with $d(\Lambda^j) = d(\Lambda)$,
for $j =1, 2$.
\begin{rem}
Let $[\Lambda, n, r, \beta]$ be a skew semisimple stratum in $A$.
Then the $G$-centralizer $G_E$ of $\beta$ is a 
product of classical groups over extensions of $F_0$.

We claim that if $F/F_0$ is quadratic unramified,
then $G_E$ is a product of unramified unitary groups.
It suffices to prove this in the simple case.

Suppose that $[\Lambda, n, n-1, \beta]$ is a skew simple stratum.
Since $F/F_0$ is unramified, there is $\varepsilon \in \ri_0^\times$
such that $F = F_0[\e]$.
Let $E_0$ denote the $\sigma$-fixed subfield of $E$.
Then $E/E_0$ is a quadratic extension
and $G_E$ is the unitary group over $E_0$,
corresponding to the involutive algebra
$(\mathrm{End}_E(V), \sigma)$.
Suppose that $E/E_0$ is ramified.
We can take a uniformizer $\p_E$ of $E$ so that $\sigma(\p_E)
= -\p_E$.
Then the element $\p_E \e \in E_0$ is a uniformizer of $E$.
This contradicts the assumption.
\end{rem}

\subsection{Representations associated maximal tori}
\label{sec:max}
We recall from \cite{St2} and \cite{St1}
the construction of supercuspidal representations 
associated to maximal compact tori.

Let $[\Lambda, n, n-1, \beta]$ be a skew semisimple
stratum
such that the $G$-centralizer $G_E$ of $\beta$ is 
a maximal torus.
Then $G_E$ lies in $P_{0}(\Lambda)$ and 
by \cite{St2} Theorem 4.6,
we get 
\begin{eqnarray}\label{eq:intertwining}
I_G[\Lambda, n, [n/2], \beta] = G_E P_{[(n+1)/2]}(\Lambda).
\end{eqnarray}

Put $J = G_E P_{[(n+1)/2]}(\Lambda)$, 
$J^1 = P_{1}(\Lambda_{\ri_E}) P_{[(n+1)/2]}(\Lambda)$,
and
$H^1 = P_{1}(\Lambda_{\ri_E}) P_{[n/2]+1}(\Lambda)$,
where $P_1(\Lambda_{\ri_E}) = G_E \cap P_1(\Lambda)$.
The character $\psi_\beta$ of $P_{[n/2]+1}(\Lamdba)$ 
can extend to 
a character $\theta$ of $H^1$
since $H^1/P_{[n/2]+1}(\Lambda)$ is abelian.
Thank to \cite{St1} Proposition 4.1,
there exists a unique irreducible representation $\eta_\theta$
of $J^1$
which contains $\theta$.
Moreover the restriction of $\eta_\theta$ to $H^1$
is a multiple of $\theta$
and $\dim \eta_\theta = [J^1: H^1]^{1/2}$,
which is a power of $q$.

The order of the finite abelian group
$J/J^1 \simeq G_E/P_1(\Lambda_{\ri_E})$
is coprime to $q$,
and hence
$\eta_\theta$ can extend an irreducible representation $\kappa_\theta$ of $J$.
For any $\chi \in 
(G_E /P_1(\Lambda_{\ri_E}))^\wedge$,
we have 
$I_G(\chi \otimes \kappa_\theta) = I_G(\psi_\beta) = J$,
so that the compactly induced representation
$\mathrm{Ind}_J^G (\chi \otimes \kappa_\theta)$
is irreducible and supercuspidal.

It is easy to check that 
every irreducible smooth representation of $G$
which contains $[\Lambda, n, [n/2], \beta]$
can be constructed in this way.

We classify these representations.
Let $\theta$ and $\theta'$ be extensions of $\psi_\beta$
to $H^1$.
Let $\kappa_\theta$ and $\kappa'_{\theta'}$ be as above.
Let $\chi$  and $\chi'$ be characters of $G_E /P_1(\Lambda_{\ri_E})$.
Suppose that 
$\mathrm{Ind}_J^G (\chi \otimes \kappa_\theta)
\simeq 
\mathrm{Ind}_J^G (\chi' \otimes \kappa'_{\theta'})$.
Then there is $g \in G$ which intertwines 
$\chi \otimes \kappa_\theta$
and $\chi' \otimes \kappa'_{\theta'}$.
Thinking of  the restriction to $P_{[n/2]+1}(\Lambda)$,
we obtain $g \in I_G(\psi_\beta) = J$,
and hence
$\chi \otimes \kappa_\theta \simeq 
\chi' \otimes \kappa'_{\theta'}$.
Restricting it to $H^1$,
we get $\theta = \theta'$.
The representation $\kappa'_{\theta'}$
is isomorphic to $\chi'' \otimes \kappa_\theta$,
for some $\chi'' \in 
(G_E /P_1(\Lambda_{\ri_E}))^\wedge$,
and we get $\chi = \chi' \chi''$.

\begin{rem}
The number of the irreducible smooth representations of 
$G$ containing $[\Lambda, n, [n/2], \beta]$
equals to
$[G_E: G_E \cap P_{[n/2]+1}(\Lambda)]$.
\end{rem}

\subsection{Uniqueness of lattice sequences}\label{sec:recover}
Let $[\Lambda, n, n-1, \beta]$ be a skew semisimple stratum
associated to an orthogonal $F$-splitting
$V = V^1 \bot \ldots \bot V^k$.
Then $\Lambda = \Lambda^1 \bot \ldots \bot \Lambda^k$
and $\beta = \beta_1 + \cdots + \beta_k$,
where $\Lambda^i(j) = \Lambda(j)\cap V^i$, $j \in \Z$ and 
$\beta_i = \beta|_{V^i}$, for $1 \leq i \leq k$.
If we write $e = e(\Lambda) = e(\Lambda^i)$ and
$k = (n, e)$,
then we have $\Phi_\beta(X) = \prod_i \Phi_{i}(X)$
where $\Phi_{i}(X)$ is the characteristic polynomial 
of $y_i = \p_F^{n/k}\beta_i^{e/k} \in \mathrm{End}_F(V^i)$.

By the definition of split strata,
$\Phi_i(X)$ and $\Phi_j(X)$ are coprime modulo $\mi_F$,
for $i \neq j$.
Note that if $[\Lambda^i, n, n-1, \beta_i]$ is simple,
then $\Phi_i(X) \bmod{\mi_F}$ is a power of an irreducible polynomial
in $k_F$.
If $\beta_i = 0$, then we have $\Phi_i(X) \bmod{\mi_F}= X^{N_i}$,
where $N_i = \dim_F V^i$.
So we see that $V^i$ is just 
the kernel of $\Phi_i(y_\beta)$, for $1 \leq i \leq k$.
In particular, $y_\beta$ determines the $F$-splitting $V = V_1 \bot \ldots \bot V_k$ uniquely.

The algebra
$E_i = F[\beta_i]$ is an extension over $F$ and 
$\Lambda^i$ is an $\ri_{E_i}$-lattice sequence in $V^i$.
We write $B_i$ for the $\mathrm{End}_F(V^i)$-centralizer of $\beta_i$.
Then the involutive algebra $(B_i, \sigma)$ defines
a nondegenerate hermitian form $h_{E_i}$ on the $E_i$-space $V^i$
up to scalar in $E_i^\times$.
For $1 \leq i \leq k$,
since $\rad_j(\Lambda) \cap B_i$ is $\sigma$ stable for all $j \in \Z$,
the sequence
$\Lambda^i$ is a self-dual $\ri_{E_i}$-lattice sequence
in $(V^i, h_{E_i})$.
\begin{lem}\label{lem:recover}
Suppose that the space $(V^i, h_{E_i})$ is anisotropic for all $1 \leq i \leq k$.
Then we can recover $\Lambda$
from $\beta$, $n$, $e(\Lambda)$ and $d(\Lambda)$.
\end{lem}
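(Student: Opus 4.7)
The plan is to reduce to the simple case by using $\beta$ itself to recover the orthogonal splitting, and then to exploit the rigidity of self-dual lattice sequences in an anisotropic hermitian space.

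\emph{Reduction to the simple case.} I would first argue that $\beta$ alone already determines the orthogonal splitting $V = V^1 \bot \cdots \bot V^k$ and the fields $E_i = F[\beta_i]$. Indeed, $F[\beta]$ is a commutative semisimple $F$-algebra and decomposes as $\prod_i E_i$; the associated orthogonal idempotents cut out the $V^i$, and orthogonality of this decomposition with respect to $h$ follows from $\beta$ being skew-hermitian, which forces $h(V^i, V^j) = 0$ for $i \neq j$. Each $\Lambda^i = \Lambda \cap V^i$ then inherits $e(\Lambda^i) = e(\Lambda)$ and $d(\Lambda^i) = d(\Lambda)$, and $\Lambda$ is reconstructed as $\Lambda(i) = \Lambda^1(i) \oplus \cdots \oplus \Lambda^k(i)$. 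Thus it suffices to recover each $\Lambda^i$ from $\beta_i$, $n$, $e(\Lambda)$, $d(\Lambda)$ under the anisotropy hypothesis on $(V^i, h_{E_i})$, which reduces the lemma to the simple case.

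\emph{The simple, anisotropic case.} Assume now that $[\Lambda, n, n-1, \beta]$ is simple with $E = F[\beta]$ and $(V, h_E)$ anisotropic over $E/E_0$. Since $E$ is determined by $\beta$, so is the ramification index $e(E/F)$, and the $\ri_E$-period of $\Lambda$ equals $e(\Lambda)/e(E/F)$, hence is determined. Anisotropy of $(V, h_E)$ makes the unitary group $\rU(V, h_E)$ over $E_0$ compact; consequently its Bruhat--Tits building reduces to a point, and the $\ri_E$-lattices in $V$ that appear in self-dual lattice sequences form a single totally ordered chain $\cdots \supsetneq L_0 \supsetneq L_1 \supsetneq \cdots$ with $L_{j+t} = \mi_E L_j$ for a fixed $t \in \{1,2\}$ depending on the type of the space. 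Writing $\Lambda(i) = L_{k(i)}$, the lattice sequence is encoded by a non-decreasing function $k:\Z \to \Z$ respecting the recovered period; the duality $\Lambda(i)^\# = \Lambda(d(\Lambda)-i)$ translates into a reflection relation $k(d(\Lambda)-i) = c - k(i)$ for an explicit constant $c$; and the normalization $\nu_\Lambda(\beta) = -n$ fixes the shift of $k$ induced by multiplication by $\beta$. Together these constraints determine $k$, and hence $\Lambda$, uniquely.

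\emph{Main obstacle.} The least routine ingredient is the structural claim that the relevant $\ri_E$-lattices in an anisotropic $(V, h_E)$ form a single chain with at most two $\mi_E$-orbits. This is a case-by-case check, going through the classification of anisotropic hermitian/orthogonal forms over a local field: the orthogonal case $E = E_0$ with $\dim_E V \leq 4$, the unramified hermitian case with $\dim_E V \leq 1$, and the ramified hermitian case with $\dim_E V \leq 2$. In each case the self-dual lattices can be written down explicitly, and the chain structure is then elementary. Once this is settled, the reduction above reassembles $\Lambda$ from $(\beta, n, e(\Lambda), d(\Lambda))$.
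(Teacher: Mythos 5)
Your approach mirrors the paper's: reduce to the simple components, use anisotropy of $(V^i, h_{E_i})$ to show each $\Lambda^i$ is essentially rigid, and reassemble $\Lambda = \Lambda^1 \bot \cdots \bot \Lambda^k$. The paper's proof is terser --- it simply asserts that anisotropy makes $\Lambda^i$ the unique self-dual $\ri_{E_i}$-lattice sequence of $\ri_F$-period $e(\Lambda)$ up to translation, and then lets $d(\Lambda^i)=d(\Lambda)$ pick out the translate --- so your explicit chain/step-function analysis is in the same spirit, merely more detailed.

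Two supporting claims need correcting. First, $h(V^i,V^j)=0$ for $i\neq j$ does \emph{not} follow merely from $\beta$ being skew-hermitian: for $\beta\in A_-$, the involution $\sigma$ stabilizes $F[\beta]$ but can permute its primitive idempotents, yielding components paired rather than fixed by $\sigma$ (this is precisely the $G$-split situation of type (iii-d) discussed in Section~\ref{comparison}, which does occur for skew $\beta$). For this lemma the point is moot, because the orthogonal splitting is part of the hypothesis that the stratum is skew \emph{semisimple}, so you should simply invoke that rather than claim to derive it from skewness. Second, the statement that ``the normalization $\nu_\Lambda(\beta)=-n$ fixes the shift of $k$'' cannot be right: since $\rad_m(\Lambda+j)=\rad_m(\Lambda)$ for every $j$, the valuation $\nu_\Lambda(\beta)$ is invariant under translation of the lattice sequence and so cannot distinguish translates. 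It is your reflection relation $k(d(\Lambda)-i)=c-k(i)$ (with the $\ri_E$-period constraint) that pins the translate --- which is exactly why $d(\Lambda)$ appears among the invariants in the statement, and exactly the role it plays in the paper's proof. With those two mis-statements removed, the argument is correct and coincides with the paper's.
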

\begin{proof}
The assumption implies that 
$\Lambda^i$ is a unique self-dual $\ri_{E_i}$-lattice 
sequence in $(V_i, h_{E_i})$ of $\ri_F$-period $e(\Lambda)$,
up to translation.
Recall that $\Lambda^i$ is a self-dual $\ri_F$-lattice sequence
in $(V^i, h|_{V_i})$ with $d(\Lambda^i) = d(\Lambda)$.
So
$d(\Lambda)$ determines $\Lambda^i$, for all $1 \leq i \leq k$.
Therefore we can recover $\Lambda$ by the equation
$\Lambda = \Lambda^1 \bot \ldots 
\bot \Lambda^k$.
\end{proof}
\subsection{Hecke algebras}
Let $G$ be a unimodular, 
locally compact, totally disconnected topological group, 
$J$ an open compact subgroup of $G$,
and $(\sigma, W)$ an irreducible smooth representation of $J$.
For $g \in G$, we write
$\sigma^g$ for the representation of
$J^g = g^{-1}Jg$ defined by
$\sigma^g (x) = \sigma(g x g^{-1})$, $x \in J^g$.
We define the intertwining of $\sigma$ in $G$ by
\[
I_G(\sigma) = 
\{ g \in {G}\ 
|\ \mathrm{Hom}_{J\cap J^g}(\sigma, \sigma^g) \neq 0 \}.
\]

Let $(\widetilde{\sigma}, \widetilde{W})$ denote 
the contragradient representation of $(\sigma, W)$.
The Hecke algebra $\He(G//J, \sigma)$ 
is the set of 
compactly supported functions $f : G  \rightarrow \mathrm{End}_{\C}(\widetilde{W})$
such that
\[
f(k g k') = \widetilde{\sigma}(k)f(g) \widetilde{\sigma}(k'),\
k, k' \in J,\ g \in G.
\]
Let $dg$ denote the Haar measure on $G$ normalized
so that the volume $\mathrm{vol}(J)$ of $J$ is $1$.
Then $\He(G//J, \sigma)$ becomes an algebra
under convolution relative to $dg$.
Recall from \cite{BK1} (4.1.1) that the support 
of $\He(G//J, \sigma)$
is the intertwining of  $\widetilde{\sigma}$ in $G$,
that is, 
\[
I_G(\widetilde{\sigma}) = 
\bigcup_{f\in \He(G//J, \sigma)}\mathrm{supp}(f).
\]

Since $J$ is compact,
there exists a $J$-invariant, positive definite hermitian form 
on $\widetilde{W}$.
This form induces an involution $X \mapsto \overline{X}$
on $\mathrm{End}_{\C}(\widetilde{W})$.
For $f \in \He(G//J, \sigma)$,
we define $f^* \in \He(G//J, \sigma)$ by 
$f^*(g) = \overline{f(g^{-1})},\ g \in G$.
Then the map $*: \He(G//J, \sigma) \rightarrow 
\He(G//J, \sigma)$ is an involution on $\He(G//J, \sigma)$.

Let 
$\Irr(G)$ denote the set of equivalence classes of 
irreducible smooth representations of $G$
and $\Irr(G)^{(J, \sigma)}$
the subset of $\Irr(G)$ consisting of
elements
whose $\sigma$-isotypic components are not zero.
Let $\Irr \He(G//J, \sigma)$ denote the 
set of equivalence classes of irreducible representations
of $\He(G//J, \sigma)$.
Then, by \cite{BK1} (4.2.5), there is a bijection 
$\Irr(G)^{(J, \sigma)} \simeq \Irr \He(G//J, \sigma)$.

\section{A generalization of a result of Moy}\label{section:Moy}
Let $[\Lambda, n, n-1, \beta]$ be a  skew semisimple stratum 
in $A$ associated to an orthogonal $F$-splitting 
$V = V^1 \bot \ldots \bot V^k$.
As usual, we write $\beta = \beta_1 + \cdots + \beta_k$,
where $\beta_i = \beta|_{V^i}$.
We have $E = F[\beta] = \bigoplus_{1 \leq i \leq k} E_i$,
where $E_i = F[\beta_i]$.

Throughout this section,
we 
assume that $E_i$ is tamely ramified over $F$ for all $1 \leq i \leq k$.

We put $A^{ij} = \mathrm{Hom}_F(V^j, V^i)$, for $1 \leq i, j \leq k$.
When we write $B_i$ for the $A^{ii}$-centralizer of $\beta_i$,
the $A$-centralizer $B$ of $\beta$ equals to 
$\bigoplus_{1 \leq i \leq k}B_i$.
Let $B^\bot$ denote the orthogonal complement of $B$
in $A$ with respect to the pairing induced by
$\mathrm{tr}_{A/F}$,
and let $B_i^\bot$ denote that of $B_i$
with respect to $\mathrm{tr}_{A^{ii}/F}$.
Then 
we have 
\begin{eqnarray}
B^\bot = \bigoplus_{i \neq j} A^{ij} \oplus \bigoplus_i B_i^\bot.
\end{eqnarray}
and $A = B \oplus B^\bot$.
Note that
the set $B$ and $B^\bot$
are $\sigma$-stable since $\beta \in A_-$.
\begin{prop}\label{prop:decomp_E}
For $k \in \Z$,
we have
$\rad_k(\Lambda) = \rad_k(\Lambda)\cap B \oplus 
\rad_k(\Lambda)\cap B^\bot$.
\end{prop}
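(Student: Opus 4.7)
The plan is to reduce to the simple case using the orthogonal $F$-splitting $V = V^1 \bot \cdots \bot V^k$ and then to invoke the existence of a tame corestriction from Bushnell--Kutzko theory, whose filtration-preservation property is available precisely because each $E_i/F$ is tamely ramified.

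\medskip

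First I would use the splitting to write $A = \bigoplus_{1\leq i,j\leq k} A^{ij}$. Since $\Lambda(m) = \bigoplus_i \Lambda^i(m)$ and the sum is direct, an element $X = \sum_{i,j} X^{ij}$ with $X^{ij}\in A^{ij}$ satisfies $X\Lambda(m)\subset\Lambda(m+k)$ for all $m$ if and only if $X^{ij}\Lambda^j(m)\subset\Lambda^i(m+k)$ for every $i,j,m$. Hence
\begin{equation*}
\rad_k(\Lambda) = \bigoplus_{1\leq i,j\leq k}\bigl(\rad_k(\Lambda)\cap A^{ij}\bigr),
\end{equation*}
and in particular $\rad_k(\Lambda)\cap A^{ii} = \rad_k(\Lambda^i)$. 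Because $A^{ij}\subset B^\bot$ for $i\neq j$, the off-diagonal summands lie already inside $\rad_k(\Lambda)\cap B^\bot$, so the proposition reduces to proving, for each fixed $i$, that
\begin{equation*}
\rad_k(\Lambda^i) = \bigl(\rad_k(\Lambda^i)\cap B_i\bigr) \oplus \bigl(\rad_k(\Lambda^i)\cap B_i^\bot\bigr).
\end{equation*}

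\medskip

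Next, for each $i$ the stratum $[\Lambda^i, n_i, n_i-1, \beta_i]$ is simple with $E_i/F$ tamely ramified (or $\beta_i = 0$, in which case $B_i = A^{ii}$ and there is nothing to prove). In the simple tame case, the theory of tame corestrictions (\cite{BK1}~(1.3.4) and (1.4.10)) furnishes a surjective $(B_i,B_i)$-bimodule homomorphism $s_i \colon A^{ii}\to B_i$ whose kernel is $B_i^\bot$, normalised so that $s_i|_{B_i} = \mathrm{id}_{B_i}$; thus $s_i$ is exactly the projection onto $B_i$ along $B_i^\bot$. The decisive input is that this projection preserves the lattice filtration,
\begin{equation*}
s_i\bigl(\rad_k(\Lambda^i)\bigr) = \rad_k(\Lambda^i)\cap B_i,\qquad k\in\Z.
\end{equation*}
Given this, any $X\in\rad_k(\Lambda^i)$ may be written as $X = s_i(X) + \bigl(X - s_i(X)\bigr)$ with $s_i(X)\in\rad_k(\Lambda^i)\cap B_i$ and $X - s_i(X)\in\rad_k(\Lambda^i)\cap B_i^\bot$, which is the decomposition sought. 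Reassembling over $i$ and combining with the off-diagonal blocks from the first step yields the claimed equality $\rad_k(\Lambda) = \bigl(\rad_k(\Lambda)\cap B\bigr)\oplus\bigl(\rad_k(\Lambda)\cap B^\bot\bigr)$.

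\medskip

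The main obstacle is the filtration-preservation identity for $s_i$: it is not formal, and in the wildly ramified setting it fails in general. This is exactly the point at which the standing assumption that every $E_i/F$ be tamely ramified is used. A minor bookkeeping point is to ensure that the $(B_i,B_i)$-bimodule map supplied by \cite{BK1} can be taken with $s_i|_{B_i} = \mathrm{id}_{B_i}$, but since any $(B_i,B_i)$-bimodule endomorphism of the central simple $E_i$-algebra $B_i$ is multiplication by a scalar in $E_i^\times$, this is achieved by rescaling, which does not disturb the filtration identity.
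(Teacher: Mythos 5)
Your proposal is correct and takes essentially the same route as the paper: first the block decomposition $\rad_k(\Lambda)=\bigoplus_{i,j}\rad_k(\Lambda)\cap A^{ij}$ coming from the splitting (the paper cites \cite{BK2} Proposition 2.9 for this), then, on each diagonal block, the $(B_i,B_i)$-bimodule projection onto $B_i$ with kernel $B_i^\bot$ together with its filtration-preservation, which is exactly where tame ramification enters (the paper cites \cite{BK1} Remark (1.3.8)(ii), i.e.\ the identification of this projection with a tame corestriction, rather than your (1.3.4)/(1.4.10), but the content is the same). No gap.
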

\begin{proof}
By \cite{BK2} Proposition 2.9, we have
$\rad_k(\Lambda) 
= \bigoplus_{i, j} \rad_k(\Lambda)\cap A^{ij}$
and $\rad_k(\Lambda)\cap A^{ii} = \rad_k(\Lambda^i)$,
for $1 \leq i \leq k$.

Suppose that $[\Lambda^i, n, n-1, \beta_i]$ is simple.
It follows from \cite{BK1} Remark (1.3.8) (ii) that
the $(B_i, B_i)$-bimodule projection $s_i: A^{ii} \rightarrow B_i$
with kernel $B_i^\bot$
satisfies
$s_i(\rad_k(\Lambda^i)) = \rad_k(\Lambda)\cap B$
because we are assuming that $E_i$ is tamely ramified over $F$.
Hence
we get $\rad_k(\Lambda)\cap A^{ii}
= \rad_k(\Lambda)\cap B_i \oplus
\rad_k(\Lambda)\cap B_i^\bot$.
If $\beta_i = 0$,
then $B_i = A^{ii}$.
This completes the proof.
\end{proof}

For 
$k \in \Z$,
we abbreviate
$\rad_k = \rad_k(\Lambda)$,
$\rad'_k =  \rad_k(\Lambda) \cap B$ and
$\rad_k^\bot = \rad_k(\Lambda) \cap B^\bot$.
Define $\sigma$-stable $\ri_F$-lattices 
$\J$ and $\J_+$ in $A$ by
\begin{eqnarray}\label{eq:def_J}
\J = \rad'_n \oplus \rad_{[(n+1)/2]}^\bot,\
\J_+ = \rad'_n \oplus \rad_{[n/2]+1}^\bot,
\end{eqnarray}
and open compact subgroups $J$ and $J_+$ of  $G$ by
\begin{eqnarray}
J = (1+\J)\cap G,\
J_+ = (1 + \J_+)\cap G,
\end{eqnarray}
as in \cite{GSp4} (4.16).

Since $J_+ \subset P_{[n/2]+1}(\Lambda)$,
the quotient $J_+/P_{n+1}(\Lambda)$ is abelian.
As usual,
we get an isomorphism of finite abelian groups
\begin{eqnarray*}
(\rad_{-n})_-/ (\J_+^*)_- \simeq (J_+/P_{n+1}(\Lambda))^\wedge;\
b + (\J_+^*)_- \mapsto \Psi_b,
\end{eqnarray*}
where
\begin{eqnarray}\label{eq:Psi}
\Psi_b(p) = \psi_0(\mathrm{tr}_{A/F_0}(b(p-1))),\ p \in J_+.
\end{eqnarray}
Due to Proposition~\ref{prop:decomp_E} and \cite{BK2} (2.10),
we get
\begin{eqnarray}
\J_+^* = \rad_{1-n}' \oplus 
\rad_{-[n/2]}^\bot.
\end{eqnarray}

For $X \in A$,
we write $\mathrm{ad}(\beta)(X) = \beta X -X\beta$. 
Since $\beta \in (\rad_{-n})_-$,
the map $\ad(\beta)$ induces a quotient map
$\ad(\beta):
\rad_{k}^\bot/\rad_{k+1}^\bot \rightarrow 
\rad_{k-n}^\bot /\rad_{k-n+1}^\bot$, 
for $k \in \Z$.

\begin{lem}\label{lem:adjoint}
For $k \in \Z$,
the map 
$\ad(\beta):
\rad_{k}^\bot/\rad_{k+1}^\bot \rightarrow 
\rad_{k-n}^\bot /\rad_{k-n+1}^\bot$ 
is an isomorphism.
\end{lem}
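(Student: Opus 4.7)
The plan is to use the block decomposition of $B^\bot$ induced by the orthogonal splitting $V = V^1 \bot \ldots \bot V^k$ to reduce the claim to separate diagonal and off-diagonal cases, and to verify each by invoking tameness or semisimplicity as appropriate. Combining Proposition~\ref{prop:decomp_E}, applied to each simple component $[\Lambda^i, n, n-1, \beta_i]$, with the general decomposition $\rad_k(\Lambda) = \bigoplus_{i,j}\rad_k(\Lambda)\cap A^{ij}$ of \cite{BK2} Proposition~2.9, the graded piece $\rad_k^\bot/\rad_{k+1}^\bot$ splits as
\begin{eqnarray*}
\bigoplus_{i \neq j}\frac{\rad_k\cap A^{ij}}{\rad_{k+1}\cap A^{ij}} \oplus \bigoplus_i \frac{\rad_k(\Lambda^i)\cap B_i^\bot}{\rad_{k+1}(\Lambda^i)\cap B_i^\bot}.
\end{eqnarray*}
Since $\beta = \sum_l \beta_l$ preserves each $V^l$, the operator $\ad(\beta)$ respects this decomposition and carries each summand to the analogous summand of $\rad_{k-n}^\bot/\rad_{k-n+1}^\bot$, so it suffices to treat each block separately.

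On a diagonal summand inside $B_i^\bot$, the restriction of $\ad(\beta)$ is $\ad(\beta_i)$. If $\beta_i = 0$ then $B_i^\bot = 0$ and there is nothing to prove. Otherwise $[\Lambda^i, n, n-1, \beta_i]$ is a simple stratum in $\mathrm{End}_F(V^i)$ with $E_i/F$ tamely ramified, and the desired isomorphism is the classical tame corestriction result of Bushnell and Kutzko (\cite{BK1} Theorem~1.4.10 together with Remark~(1.3.8)(ii)), which is already invoked in the proof of Proposition~\ref{prop:decomp_E}.

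On an off-diagonal summand $A^{ij}$ with $i \neq j$, the map sends $X$ to $\beta_i X - X \beta_j$. The split condition in the definition of a semisimple stratum ensures that the reduced characteristic polynomials of $[\Lambda^i, n, n-1, \beta_i]$ and $[\Lambda^j, n, n-1, \beta_j]$ are coprime in $k_F[X]$, with the degenerate cases $\beta_i = 0$ or $\beta_j = 0$ handled separately using that the remaining component is simple with characteristic polynomial of non-zero constant term. Left multiplication $L_{\beta_i}$ and right multiplication $R_{\beta_j}$ are commuting $\ri_F$-endomorphisms of $A^{ij}$, each shifting $\nu_\Lambda$ by $-n$, and their induced actions on the graded piece are annihilated by the respective reduced characteristic polynomials; coprimality then forces $L_{\beta_i} - R_{\beta_j}$ to be injective on $\rad_k \cap A^{ij}/\rad_{k+1}\cap A^{ij}$, and matching of source and target $k_F$-dimensions (from periodicity of $\Lambda$) upgrades injectivity to bijectivity.

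The principal obstacle is this last step: one must verify that the polynomial relations satisfied by $\beta_i$ and $\beta_j$ descend cleanly to annihilation statements on the graded piece, keeping careful track of the $-n$ filtration shift that both $L_{\beta_i}$ and $R_{\beta_j}$ produce. Once the appropriate polynomial identities are established at the residue level, a Bezout identity $p\phi_{\beta_i} + q\phi_{\beta_j} = 1$ in $k_F[X]$ yields an explicit inverse for $L_{\beta_i} - R_{\beta_j}$ on the graded piece, closing out the off-diagonal case and hence the lemma.
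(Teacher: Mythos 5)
Your proposal follows the same skeleton as the paper's proof: decompose $B^\bot$ into the off-diagonal blocks $A^{ij}$ and the diagonal pieces $B_i^\bot$, handle the diagonal (simple) components by the Bushnell--Kutzko exactness property of minimal elements, and handle the off-diagonal blocks by coprimality of the characteristic polynomials. Two differences are worth recording. First, for the off-diagonal blocks the paper simply cites \cite{BK2} \S 3.7 Lemma 2, which is exactly the statement you re-derive with the commuting left/right multiplications and the Bezout identity; your sketch is the standard proof of that lemma, so nothing is lost, but you are doing work that can be quoted. Second, and this is the one genuine gap: you invoke the BK1 exactness/tame-corestriction results directly for the simple stratum $[\Lambda^i, n, n-1, \beta_i]$, but the results of \cite{BK1} are stated for hereditary orders attached to lattice \emph{chains}, i.e.\ strict lattice sequences, and $\Lambda^i$ (like $\Lambda$) need not be strict. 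The second half of the paper's proof exists precisely to repair this: one adjoins an auxiliary $E$-space $V'$ with a strict $\ri_E$-sequence, forms the strict sequence $\Lambda''$ on $V''=V\oplus V'$, applies \cite{BK1} (1.4.9) to the simple stratum $[\Lambda'', n, n-1, \beta]$ there, and then pulls the conclusion back through \cite{BK2} Proposition 2.9. Without this (or an equivalent transfer to lattice sequences), your appeal to ``Theorem 1.4.10 together with Remark (1.3.8)(ii)'' does not literally apply.

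A smaller imprecision: the source and target graded pieces $\rad_k^\bot/\rad_{k+1}^\bot$ and $\rad_{k-n}^\bot/\rad_{k-n+1}^\bot$ need not have equal $k_F$-dimension for a fixed $k$, since $k$ and $k-n$ can lie in different residue classes modulo $e(\Lambda)$. The correct upgrade from injectivity to bijectivity is the cyclic argument the paper uses: injectivity at every degree gives a chain of inequalities along $k, k-n, k-2n, \dots, k-e(\Lambda)n$, and periodicity closes the cycle, forcing equality throughout. With these two repairs your argument matches the paper's.
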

\begin{proof}
By the periodicity of the filtration $\{\rad_k(\Lambda)\}$,
it suffices to prove that the induced map is injective for all $k \in \Z$.

Recall that 
$\rad_l^\bot = \bigoplus_{i \neq j}\rad_l \cap A^{ij}
\oplus \bigoplus_{i} \rad_l \cap B_i^\bot$, for $l \in \Z$.
By \cite{BK2} \S 3.7 Lemma 2,
$\ad(\beta)$ maps $\rad_k \cap A^{ij}$
onto $\rad_{k-n} \cap A^{ij}$, for $i \neq j$.
Thus the assertion is reduced to the simple case.

Let $[\Lambda, n, n-1, \beta]$ be a simple stratum in $A$.
Let $e$ denote the $\ri_E$-period of $\Lambda$
and let $V'$ be an $e$-dimensional $E$-vector space.
Then there is a strict $\ri_E$-lattice sequence $\Lambda'$ 
in $V'$ of $\ri_E$-period $e$.
Define a strict $\ri_E$-lattice sequence $\Lambda''$
in $V'' = V \oplus V'$ by 
$\Lamdba''(i) = \Lambda(i) \oplus \Lambda'(i)$,  $i \in \Z$.
We confuse $\beta$ with $\beta \cdot 1_{V''}$.
Then we get a simple stratum $[\Lambda'', n, n-1, \beta]$
in $A'' = \mathrm{End}_F(V'')$.

Let $B''$ denote the $A''$-centralizer of $\beta$.
It follows from \cite{BK1} (1.4.9) that 
if $x \in \rad_0(\Lambda'')$ satisfies
$\ad(\beta)(x) \in \rad_{r-n}(\Lambda'')$, for $r \geq 1$,
then we have $x \in B'' + \rad_{r}(\Lambda'')$.
For $k \geq 0$,
we see that if $x \in \rad_{k}(\Lambda)\cap B^\bot$
satisfies $\ad(\beta)(x) \in \rad_{k-n+1}(\Lambda)$, 
then 
$x$ lies in $(B'' + \rad_{k+1}(\Lambda''))\cap B^\bot
= \rad_{k+1}(\Lambda)\cap B^\bot$,
by \cite{BK2} Proposition 2.9.
By the periodicity of $\{\rad_i(\Lambda)\}_{i \in \Z}$,
this holds for all $k \in \Z$.
This completes the proof.
\end{proof}

Since $\beta$ is skew,
we obtain the following corollary:
\begin{cor}\label{cor:adjoint}
For $k \in \Z$,
the map 
$\ad(\beta):
(\rad_{k}^\bot)_-/(\rad_{k+1}^\bot)_- \rightarrow 
(\rad_{k-n}^\bot)_- /(\rad_{k-n+1}^\bot)_-$ 
is an isomorphism.
\end{cor}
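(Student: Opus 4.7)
The plan is to deduce this from Lemma \ref{lem:adjoint} by showing that $\ad(\beta)$ is compatible with the involution $\sigma$ and then using the fact that $2$ is invertible in the residue-characteristic-odd setting to split off the $\sigma$-antifixed part.

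First I would verify that $\ad(\beta)$ commutes with $\sigma$ on $A$. Since $\sigma$ is an anti-involution of $A$ as an $F_0$-algebra and $\beta \in A_-$ (so $\sigma(\beta) = -\beta$), for any $X \in A$ we get
\[
\sigma(\ad(\beta)(X)) = \sigma(\beta X - X\beta) = \sigma(X)\sigma(\beta) - \sigma(\beta)\sigma(X) = \beta\,\sigma(X) - \sigma(X)\,\beta = \ad(\beta)(\sigma(X)).
\]
Because $B^\bot$ is $\sigma$-stable, this identity descends to the quotient map $\ad(\beta)\colon \rad_k^\bot/\rad_{k+1}^\bot \to \rad_{k-n}^\bot/\rad_{k-n+1}^\bot$ considered in Lemma \ref{lem:adjoint}, and it shows that this map is $\sigma$-equivariant.

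Next I would split the quotients into $\sigma$-fixed and $\sigma$-antifixed parts. Since $F$ has odd residual characteristic, $2$ is a unit in $\ri_0$, so on any $\sigma$-stable $\ri_0$-module $M$ the projectors $\tfrac{1}{2}(1\pm\sigma)$ give $M = M_+ \oplus M_-$. Applying this to $M = \rad_k^\bot/\rad_{k+1}^\bot$, and noting that the natural map $(\rad_k^\bot)_-/(\rad_{k+1}^\bot)_- \to (\rad_k^\bot/\rad_{k+1}^\bot)_-$ is an isomorphism (surjectivity: lift any class by $Y = \tfrac{1}{2}(X-\sigma(X))$; injectivity: a skew element that lies in $\rad_{k+1}^\bot$ already lies in $(\rad_{k+1}^\bot)_-$), we identify the domain of the corollary with the $-$-component of the quotient in Lemma \ref{lem:adjoint}. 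The same identification applies to the target.

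Since by Lemma \ref{lem:adjoint} the ambient map is an isomorphism and by the first step it respects the $\pm$ decomposition, it restricts to an isomorphism on the $-$-parts, giving the claim. There is essentially no obstacle here; the only thing to watch is the invertibility of $2$, which is guaranteed by the standing hypothesis on the residual characteristic.
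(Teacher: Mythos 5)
Your proof is correct and is exactly the argument the paper leaves implicit: the paper's one-line justification ``since $\beta$ is skew'' means precisely that $\ad(\beta)$ commutes with $\sigma$, so the isomorphism of Lemma~\ref{lem:adjoint} respects the $\pm$-decomposition (available since $2$ is invertible in odd residual characteristic) and restricts to the skew parts. Your write-up just makes these steps explicit.
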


\begin{prop}[\cite{GSp4} Lemma 4.4]\label{prop:5c1_surj}
Suppose that an element 
$\gamma \in \beta + (\rad_{1-n})_-$ 
lies in $B_-$ 
modulo $(\rad_{k-n})_-$
for some integer $k \geq 1$.
Then, 
there exists 
$p \in P_{k}(\Lambda)$ such that
$\mathrm{Ad}(p)(\gamma)  \in  \beta + (\rad'_{1-n})_-$.
\end{prop}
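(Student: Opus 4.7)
The plan is to construct $p$ as a convergent infinite product $p = \cdots p_{k+2}p_{k+1}p_k$, with $p_i \in P_i(\Lambda)$, each factor pushing the $B^\bot$-component of $\gamma - \beta$ one step higher in the filtration. First, apply Proposition~\ref{prop:decomp_E} to decompose $\gamma - \beta = z + z^\bot$ with $z \in (\rad'_{1-n})_-$ and $z^\bot \in (\rad_{1-n}^\bot)_-$; the hypothesis that $\gamma \in B_- + (\rad_{k-n})_-$ forces $z^\bot \in (\rad_{k-n}^\bot)_-$, so the induction starts at index $i = k$.

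For the inductive step, assume $p^{(i)} \in P_k(\Lambda)$ has been built with $\Ad(p^{(i)})(\gamma) = \beta + z_i + z_i^\bot$, where $z_i \in (\rad'_{1-n})_-$ and $z_i^\bot \in (\rad_{i-n}^\bot)_-$. Corollary~\ref{cor:adjoint} will supply $x_i \in (\rad_i^\bot)_-$ satisfying
\[
\ad(\beta)(x_i) \equiv z_i^\bot \pmod{(\rad_{i-n+1}^\bot)_-}.
\]
Since the residual characteristic is odd and $\sigma(x_i) = -x_i$, the Cayley transform $p_i = (1 - x_i/2)^{-1}(1 + x_i/2)$ lies in $P_i(\Lambda)$ and satisfies $p_i \equiv 1 + x_i \pmod{\rad_{2i}}$. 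A short expansion of $\Ad(p_i)(\beta + z_i + z_i^\bot)$ gives, modulo $(\rad_{i+1-n})_-$,
\[
\beta + z_i + z_i^\bot + [x_i, \beta] = \beta + z_i + \bigl(z_i^\bot - \ad(\beta)(x_i)\bigr),
\]
whose $B^\bot$-component lies in $(\rad_{i-n+1}^\bot)_-$ (by choice of $x_i$, and because $[x_i,\beta] \in B^\bot$) while the $B$-component stays in $(\rad'_{1-n})_-$. Setting $p^{(i+1)} = p_i p^{(i)}$ completes the step.

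Since $P_k(\Lambda)$ is profinite and $p^{(j)} \equiv p^{(i)} \pmod{P_i(\Lambda)}$ for all $j \geq i$, the sequence $\{p^{(i)}\}$ converges to some $p \in P_k(\Lambda)$. The $B^\bot$-component of $\Ad(p)(\gamma) - \beta$ lies in $\bigcap_i (\rad_{i-n}^\bot)_- = 0$, so $\Ad(p)(\gamma) \in \beta + (\rad'_{1-n})_-$, as required. The principal obstacle is the surjectivity of $\ad(\beta)$ on the skew part at each filtration degree, which is exactly what Corollary~\ref{cor:adjoint} provides (and rests on the tameness of each $E_i/F$); a secondary task is the Lie-to-group lift $x_i \mapsto p_i$, handled by the Cayley transform thanks to odd residual characteristic, with the higher-order correction terms safely absorbed by $\rad_{2i-n} \subset \rad_{i+1-n}$ at the next inductive stage.
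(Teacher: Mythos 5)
Your successive-approximation argument — decompose via Proposition~\ref{prop:decomp_E}, invert $\ad(\beta)$ on the skew $B^\perp$-graded pieces via Corollary~\ref{cor:adjoint}, lift to $P_i(\Lambda)$ by the Cayley transform (licit since the residual characteristic is odd), and pass to the profinite limit — is exactly the content of Moy's \cite{GSp4} Lemma~4.4, which the paper cites verbatim for this step. The details all check: $[x_i,\beta]=-\ad(\beta)(x_i)$ lies in $B^\perp$ and is skew, the quadratic corrections land in $\rad_{2i-n}\subset\rad_{i+1-n}$ because $i\geq k\geq 1$, and convergence in $P_k(\Lambda)$ follows from $p_i\in P_i(\Lambda)$.
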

\begin{proof}
Exactly the same as the proof of \cite{GSp4} Lemma 4.4.
\end{proof}

As an immediate corollary of the proof we have
\begin{cor}[\cite{GSp4} Corollary 4.5]\label{cor:5c1_J}
$\Ad(J)(\beta + \rad'_{1-n}) = \beta + (\J_+^*)_-$.
\end{cor}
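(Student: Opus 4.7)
The plan is to prove the two inclusions separately, using the $B$-bimodule projection $s \colon A \to B$ afforded by Proposition \ref{prop:decomp_E} and, for the nontrivial direction, the iterative scheme underlying Proposition \ref{prop:5c1_surj} together with Corollary \ref{cor:adjoint}.

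For $\Ad(J)(\beta + (\rad'_{1-n})_-) \subseteq \beta + (\J_+^*)_-$, I would take $j \in J$ and $b' \in (\rad'_{1-n})_-$, write $j = 1 + y$ with $y \in \J \subseteq \rad_{[(n+1)/2]}$, and use the standard commutator estimates $\Ad(j)(\beta) - \beta \in \rad_{-[n/2]}$ and $\Ad(j)(b') - b' \in \rad_{1-[n/2]}$. Since $s(\beta) = \beta$, $s(b') = b'$, and $s$ respects the filtration by Proposition \ref{prop:decomp_E}, applying $1 - s$ to $\Ad(j)(\beta + b') - \beta$ places the $B^\bot$-part in $\rad_{-[n/2]}^\bot$. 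The $B$-part automatically lies in $B \cap \rad_{1-n} = \rad'_{1-n}$, and skewness is preserved because $\Ad(G)$ commutes with $\sigma$. Thus $\Ad(j)(\beta + b') \in \beta + (\J_+^*)_-$.

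For the reverse inclusion, given $\gamma = \beta + c + c_0 \in \beta + (\J_+^*)_-$ with $c \in (\rad'_{1-n})_-$ and $c_0 \in (\rad_{-[n/2]}^\bot)_-$, I would construct $j \in J$ with $\Ad(j)(\gamma) \in \beta + (\rad'_{1-n})_-$ by successive approximation, killing the $B^\bot$-part one filtration level at a time. At step $k \geq 0$, assuming the current $B^\bot$-component lies in $(\rad_{-[n/2]+k}^\bot)_-$, Corollary \ref{cor:adjoint} furnishes $x_k \in (\rad_{[(n+1)/2]+k}^\bot)_-$ whose adjoint $\ad(\beta)(x_k)$ represents that component modulo $(\rad_{-[n/2]+k+1}^\bot)_-$. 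Since $p$ is odd, the Cayley transform $j_k = (1 - x_k/2)^{-1}(1 + x_k/2)$ lies in $G$ and satisfies $j_k - 1 \in x_k + \rad_{n+2k}$; Proposition \ref{prop:decomp_E} decomposes the tail as $\rad'_{n+2k} \oplus \rad_{n+2k}^\bot \subseteq \rad'_n \oplus \rad_{[(n+1)/2]+k}^\bot \subseteq \J$, so $j_k \in J$. Conjugation by $j_k$ reduces the $B^\bot$-component by one filtration level while the argument of the previous paragraph keeps the $B$-component inside $\rad'_{1-n}$. The infinite product $j = \cdots j_1 j_0$ converges in $J$ by completeness of the filtration, and in the limit $\Ad(j)(\gamma) \in \beta + (\rad'_{1-n})_-$.

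The hardest step will be confirming that each $j_k$ sits in $J$ rather than merely in $P_{[(n+1)/2]+k}(\Lambda)$: the higher-order Cayley correction introduces a potentially nonzero $B$-component that must be absorbed into $\rad'_n$. This absorption depends on Proposition \ref{prop:decomp_E} and so relies essentially on the tame ramification hypothesis for each $E_i$, without which the projection $s$ would fail to preserve the lattice filtration.
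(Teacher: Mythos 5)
Your argument is correct and is essentially the paper's own (i.e.\ Moy's) proof: the paper deduces this corollary directly from the successive-approximation scheme behind Proposition~\ref{prop:5c1_surj}, which is exactly your iteration using Corollary~\ref{cor:adjoint} and Cayley transforms, the only extra point being that the conjugators lie in $J$ because the $B^\bot$-defect starts at level $-[n/2]$ so the correcting elements start at level $[(n+1)/2]$. The bookkeeping you flag (absorbing the $B$-component of the Cayley tail into $\rad'_n$ via Proposition~\ref{prop:decomp_E}, using $2[(n+1)/2]\geq n$) does check out, so nothing is missing.
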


\begin{prop}[\cite{GSp4} Theorem 4.1]\label{prop:5_2}
Let $\pi$ be an irreducible smooth representation of $G$.
Then $\pi$ contains $[\Lambda, n, n-1, \beta]$ if and only if 
$\pi$ contains $(J_+, \Psi_\beta)$.
\end{prop}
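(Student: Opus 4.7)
The plan is to split the proof into two directions. For the easy direction $(\Leftarrow)$, since $P_n(\Lambda) \subset J_+$ (because $\rad_n \subset \J_+$) and the defining formulas show $\Psi_\beta|_{P_n(\Lambda)} = \psi_\beta$, any irreducible $\pi$ containing $\Psi_\beta$ on $J_+$ automatically contains the character $\psi_\beta$ of $P_n(\Lambda)$, i.e., the stratum.

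For the converse, let $V_0 \subset \pi$ denote the $\psi_\beta$-isotypic subspace for $P_n(\Lambda)$, which is nonzero by hypothesis. I first check that both $P_{[n/2]+1}(\Lambda)$ and $P_1(\Lambda)$ stabilise $V_0$: for $p$ in either group, $[p-1,\beta] \in \rad_{[n/2]+1-n} \subset \rad_{1-n}$, so $\Ad(p)\beta$ defines the same character as $\beta$ on $P_n(\Lambda)$. Because $P_{[n/2]+1}(\Lambda)/P_{n+1}(\Lambda)$ is abelian by Proposition~\ref{prop:p}, I decompose $V_0$ into character eigenspaces under $P_{[n/2]+1}(\Lambda)$; the same proposition (with $r=[n/2]$) identifies these characters with $\psi_{\beta'}$ for $\beta'$ ranging over $(\rad_{-n})_-/(\rad_{-[n/2]})_-$, and the requirement that each restrict to $\psi_\beta$ on $P_n(\Lambda)$ forces $\beta' \in \beta + (\rad_{1-n})_-$ modulo $(\rad_{-[n/2]})_-$. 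Since $V_0 \neq 0$, some such $\psi_{\beta'}$ occurs with nontrivial eigenspace.

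Next I apply Proposition~\ref{prop:5c1_surj} with $k=1$: the hypothesis that $\beta'$ lie in $B_-$ modulo $(\rad_{1-n})_-$ holds trivially as $\beta'-\beta \in (\rad_{1-n})_-$ and $\beta \in B_-$. This yields $p \in P_1(\Lambda)$ with $\Ad(p)\beta' \in \beta + (\rad'_{1-n})_-$. Since $P_1(\Lambda)$ normalises $P_{[n/2]+1}(\Lambda)$ and preserves $V_0$, the operator $\pi(p)$ carries the $\psi_{\beta'}$-eigenspace isomorphically onto the $\psi_{\Ad(p)\beta'}$-eigenspace of $V_0$, which is therefore also nonzero. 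For any nonzero $v$ in this eigenspace and any $j \in J_+ \subset P_{[n/2]+1}(\Lambda)$, one has $\pi(j)v = \psi_{\Ad(p)\beta'}(j)v = \Psi_{\Ad(p)\beta'}(j)v$; the inclusion $\Ad(p)\beta' - \beta \in (\rad'_{1-n})_- \subset (\J_+^*)_-$ gives $\Psi_{\Ad(p)\beta'} = \Psi_\beta$ as characters of $J_+$, so $v$ is the desired nonzero $\Psi_\beta$-eigenvector. The main technical content is Proposition~\ref{prop:5c1_surj}, invoked as a black box; once that is available, the remainder is a bookkeeping exercise on how the abelian decomposition of $V_0$ transforms under conjugation by $P_1(\Lambda)$ and restriction to $J_+$.
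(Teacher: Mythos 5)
Your proof is correct and follows essentially the same route as the paper's: both directions hinge on the same facts, and the key step in the converse is exactly the paper's --- choose an extension $\psi_\gamma$ of $\psi_\beta$ to $P_{[n/2]+1}(\Lambda)$ occurring in $\pi$, apply Proposition~\ref{prop:5c1_surj} with $k=1$ to replace $\gamma$ by a $P_1(\Lambda)$-conjugate lying in $\beta + (\rad'_{1-n})_-$, and observe that the restriction to $J_+$ is then $\Psi_\beta$. Your version merely spells out in more detail the eigenspace decomposition of the $\psi_\beta$-isotypic space under $P_{[n/2]+1}(\Lambda)$ and the fact that $\pi(p)$ transports eigenspaces within $V_0$, steps the paper leaves implicit.
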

\begin{proof}
Since $\Psi_\beta$ is an extension of $\psi_\beta$ to $J_+$,
it is obvious that
if $\pi$ contains $\Psi_\beta$,
then $\pi$ contains $\psi_\beta$.

Suppose that $\pi$ contains $[\Lambda, n, n-1, \beta]$.
Then $\pi$ contains an extension of $\psi_\beta$ to 
$P_{[n/2]+1}(\Lambda)$.
This extension has the form $\psi_\gamma$, for some
$\gamma \in \beta + (\rad_{1-n})_-$.
By Proposition~\ref{prop:5c1_surj},
replacing $\psi_\gamma$ with a $P_1(\Lambda)$-conjugate,
we may assume 
$\gamma \in \beta + (\rad_{1-n}')_-$.
Then the restriction of $\psi_\gamma$ to $J_+$ is equal to 
$\Psi_\beta$.
This completes the proof.
\end{proof}

We put $G_E = G \cap B$.
\begin{prop}\label{prop:E_surj}
$I_G(\Psi_\beta) = J G_E J$.
\end{prop}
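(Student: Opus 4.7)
The plan is to prove the two inclusions of $I_G(\Psi_\beta) = J G_E J$ separately.

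For the easy inclusion $J G_E J \subseteq I_G(\Psi_\beta)$, I treat $G_E$ and $J$ in turn. Any $e \in G_E$ centralizes $\beta$, so the defining formula (\ref{eq:Psi}) gives $\Psi_\beta^e = \Psi_\beta$ on $J_+ \cap J_+^e$ directly. For $j = 1 + y \in J$ with $y = y' + y^\bot \in \rad'_n \oplus \rad_{[(n+1)/2]}^\bot$, the summand $y' \in B$ commutes with $\beta$, and $[y^\bot, \beta] \in \rad_{-[n/2]}^\bot \subseteq \J_+^*$ by Lemma~\ref{lem:adjoint}. A direct check shows that the higher-order terms in $\mathrm{Ad}(j)\beta - \beta$ lie in $\rad_0 \subseteq \J_+^*$, so $\mathrm{Ad}(j)\beta - \beta \in (\J_+^*)_-$. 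Pairing this against $m - 1 \in \J_+$ via the defining property $\mathrm{tr}_{A/F_0}(\J_+^* \cdot \J_+) \subseteq \mi_0 \subseteq \ker\psi_0$ gives $\Psi_\beta^j = \Psi_\beta$ on $J_+ \cap J_+^j$, so $J \subseteq I_G(\Psi_\beta)$.

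For the reverse inclusion, let $g \in I_G(\Psi_\beta)$. Unwinding the character equality $\Psi_\beta = \Psi_\beta^g$ on $J_+ \cap J_+^g$, together with the perfect pairing $(\rad_{-n})_- / (\J_+^*)_- \simeq (J_+/P_{n+1}(\Lambda))^\wedge$, yields the standard formal intertwining reformulation: there exist $b_1, b_2 \in (\J_+^*)_-$ with $\mathrm{Ad}(g)(\beta + b_1) = \beta + b_2$. By Corollary~\ref{cor:5c1_J}, every coset $\beta + (\J_+^*)_-$ is swept out by the $J$-conjugates of $\beta + (\rad'_{1-n})_-$; choosing $j_1, j_2 \in J$ and $c_1, c_2 \in (\rad'_{1-n})_-$ accordingly, and setting $g' := j_2^{-1} g j_1$, we obtain $\mathrm{Ad}(g')(\beta + c_1) = \beta + c_2$ with $\beta + c_i \in B_-$. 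The task reduces to showing that such a $g'$ lies in $J G_E J$.

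This final step is the main obstacle. In the simple-stratum case it is classical from \cite{BK1}, \cite{BK2}; the semisimple case requires assembling the simple-block intertwinings along $B = \bigoplus_i B_i$. Concretely, Proposition~\ref{prop:decomp_E} splits $A = B \oplus B^\bot$ compatibly with the filtration, which isolates the diagonal blocks $A^{ii}$ (where the simple-stratum intertwining of each $[\Lambda^i, n, n-1, \beta_i]$ applies) from the off-diagonal blocks $A^{ij}$ ($i \neq j$). On the off-diagonal blocks, the bijectivity of $\mathrm{ad}(\beta)$ from Lemma~\ref{lem:adjoint} allows one to absorb the non-$B$ component of $g'$ into $J$, so that after left- and right-multiplication by elements of $J$ we arrive at an element of $G_E$. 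The tameness hypothesis on each $E_i/F$ is essential here: it is what makes both the clean splitting of Proposition~\ref{prop:decomp_E} and the surjectivity of $\mathrm{ad}(\beta)$ on each $B_i^\bot$-piece available.
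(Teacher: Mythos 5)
Your setup coincides with the paper's proof as far as it goes: the containment $J G_E J \subseteq I_G(\Psi_\beta)$ (your direct check that $J$ and $G_E$ normalize the pair $(J_+,\Psi_\beta)$ is fine, if more laborious than necessary), and the reduction, via Corollary~\ref{cor:5c1_J}, of an arbitrary $g \in I_G(\Psi_\beta)$ to an element $g' \in JgJ$ satisfying $\mathrm{Ad}(g')(\beta + c_1) = \beta + c_2$ with $c_1, c_2 \in (\rad_{1-n}')_-$, are exactly the first steps in the text. But the step you yourself call ``the main obstacle'' is where the proposal stops being a proof. There is no classical simple-stratum intertwining statement that can be quoted blockwise to conclude $g' \in JG_EJ$ for this particular group $J$ --- that equality is the content of the proposition --- and the suggestion to ``absorb the non-$B$ component of $g'$ into $J$'' by multiplying on the left and right by elements of $J$ is not a viable closing move: elements of $JG_EJ$ have $B^\bot$-part only in depth at least $[(n+1)/2]$ beyond their $B$-part (compare Lemma~\ref{lem:intertwine_J}), so there is nothing in $J$ available to cancel an a priori shallow $B^\bot$-component of $g'$, and left/right multiplication mixes the two components in any case.

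What actually finishes the argument is a short rigidity computation that your sketch does not contain. From $g'(\beta + c_1) = (\beta + c_2)g'$ one gets $\ad(\beta)(g') = g'c_1 - c_2 g'$; since $c_1, c_2 \in B$ and $B^\bot$ is a $(B,B)$-sub-bimodule split compatibly with the filtration (Proposition~\ref{prop:decomp_E}), projecting onto $B^\bot$ yields $\ad(\beta)\bigl((g')^\bot\bigr) = (g')^\bot c_1 - c_2 (g')^\bot$. If $(g')^\bot \in \rad_l$, the right-hand side lies in $\rad_{l+1-n}$, so the \emph{injectivity} of the graded map $\ad(\beta)$ on $B^\bot$ (Lemma~\ref{lem:adjoint}) forces $(g')^\bot \in \rad_{l+1}$; iterating gives $(g')^\bot \in \bigcap_m \rad_m = 0$. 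Hence $g'$ lies in $B \cap G = G_E$ on the nose, not merely in $JG_EJ$, and $g \in Jg'J \subseteq JG_EJ$. Your instinct that Lemma~\ref{lem:adjoint} is the key tool is correct, but it enters through this injectivity descent (made possible because $c_1,c_2$ lie in $B$), not through surjectivity on off-diagonal blocks; without this step the hard inclusion is unproved.
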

\begin{proof}
We see that an element $g \in G$ lies in 
$I_G(\Psi)$ if and only if
$\mathrm{Ad}(g)(\beta + (\J_+^*)_-)\cap 
(\beta + (\J_+^*)_-) \neq \emptyset$ .
We have
$J I_G(\Psi) J = I_G(\Psi)$
and $G_E \subset I_G(\Psi)$,
and hence 
$J G_E J \subset I_G(\Psi)$.

Let $g \in I_G(\Psi)$.
Due to Corollary~\ref{cor:5c1_J},
there exists an element $k$ in $JgJ$
such that
$\Ad(k)(\beta+(\rad_{1-n}')_-)\cap (\beta +
(\rad_{1-n}')_-) \neq \emptyset$.
Take $x, y \in (\rad_{1-n}')_-$
so that
$\ad(\beta)(k) = kx-yk$.
If we write $k^\bot$ for the $B^\bot$-component
of $k$,
then we get $\ad(\beta)(k^\bot) = k^\bot x-yk^\bot$.
Suppose $k^\bot \in \rad_{l}$, for some $l \in \Z$.
Then we have
$\ad(\beta)(k^\bot) \in \rad_{l-n+1}$,
and hence by  Lemma~\ref{lem:adjoint},
$k^\bot \in \rad_{l+1}$.
This implies $k^\bot = 0$
and hence $k \in G_E$.
This completes the proof.
\end{proof}

Since $[J, J] \subset  P_n(\Lambda) \subset J_+$,
we can define an alternating form $\theta$ on $J/J_+$ by
\begin{eqnarray}
\theta(x, y) = \Psi_\beta([x, y]),\ x, y \in J.
\end{eqnarray}
\begin{lem}\label{lem:nondeg}
The form $\theta$ is nondegenerate.
\end{lem}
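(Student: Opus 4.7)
The plan is to identify the form $\theta$ with a concrete trace pairing and then invoke Corollary~\ref{cor:adjoint} together with the self-duality of the $B^\bot$-filtration under $\mathrm{tr}_{A/F_0}$.

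First, if $n$ is odd then $[(n+1)/2] = [n/2]+1$, so $\J = \J_+$, $J = J_+$, and the statement is vacuous. Assume therefore that $n$ is even. Then $\J/\J_+ \simeq \rad_{n/2}^\bot/\rad_{n/2+1}^\bot$, and an argument analogous to Proposition~\ref{thm:Morris} shows that $x \mapsto x-1$ induces an isomorphism $J/J_+ \simeq (\rad_{n/2}^\bot)_-/(\rad_{n/2+1}^\bot)_-$. For $X, Y \in (\rad_{n/2}^\bot)_-$ a direct expansion yields
\[
[1+X, 1+Y] = 1 + (XY - YX) + R \quad \text{with} \quad R \in \rad_{3n/2} \subset \rad_{n+1}.
\]
Since $\beta \in \rad_{-n}$ and $\mathrm{tr}_{A/F_0}(\rad_1) \subset \mi_0$, the contribution of $R$ to $\Psi_\beta$ is trivial. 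Hence, by cyclicity of $\mathrm{tr}_{A/F_0}$,
\[
\theta(1+X, 1+Y) = \psi_0\bigl(\mathrm{tr}_{A/F_0}(\beta(XY-YX))\bigr) = \psi_0\bigl(\mathrm{tr}_{A/F_0}(\ad(\beta)(X)\,Y)\bigr).
\]

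Next I would show that $(Z, Y) \mapsto \psi_0(\mathrm{tr}_{A/F_0}(ZY))$ descends to a perfect pairing between $(\rad_{-n/2}^\bot)_-/(\rad_{-n/2+1}^\bot)_-$ and $(\rad_{n/2}^\bot)_-/(\rad_{n/2+1}^\bot)_-$. This is obtained by restricting the standard pairing between $\rad_{-n/2}/\rad_{-n/2+1}$ and $\rad_{n/2}/\rad_{n/2+1}$ (Proposition~\ref{thm:Morris}, via $\rad_k^* = \rad_{1-k}$) successively to $B^\bot$ and then to the $\sigma$-antifixed part. Both restrictions remain perfect because $B$ and $B^\bot$ are $\mathrm{tr}_{A/F_0}$-orthogonal by the very definition of $B^\bot$, and because $A_+$ and $A_-$ are $\mathrm{tr}_{A/F_0}$-orthogonal; the latter uses that $\sigma$ preserves $\mathrm{tr}_{A/F_0}$ and that $p$ is odd.

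Finally, by Corollary~\ref{cor:adjoint}, $\ad(\beta)$ induces an isomorphism
\[
(\rad_{n/2}^\bot)_-/(\rad_{n/2+1}^\bot)_- \xrightarrow{\;\sim\;} (\rad_{-n/2}^\bot)_-/(\rad_{-n/2+1}^\bot)_-,
\]
so composing with the perfect trace pairing above gives the desired nondegeneracy of $\theta$. The main obstacle I foresee is the commutator estimate: one must check carefully that the remainder $R$ really lies deep enough that $\Psi_\beta$ sees none of it, and for this the bimodule property $B \cdot B^\bot,\, B^\bot \cdot B \subset B^\bot$ (implicit in the definition of $B^\bot$ and used in Proposition~\ref{prop:decomp_E}) is what keeps the cross terms under control.
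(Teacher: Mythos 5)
Your proof is correct and follows essentially the same route as the paper's: both reduce to the identity $\theta(1+X,1+Y)=\psi_0(\mathrm{tr}_{A/F_0}(\ad(\beta)(X)Y))$ and then combine the duality $\rad_k(\Lambda)^*=\rad_{1-k}(\Lambda)$ (restricted to the $B^\bot$- and skew parts via Proposition~\ref{prop:decomp_E}) with the bijectivity of $\ad(\beta)$ on the $B^\bot$-graded pieces (Lemma~\ref{lem:adjoint}/Corollary~\ref{cor:adjoint}). The only difference is presentational: the paper argues element-wise, showing a radical element has $\ad(\beta)(x)\in\J^*_-$ and hence lies in $J_+$, whereas you package the same facts as saying $\theta$ is the pullback of a perfect trace pairing under the isomorphism of Corollary~\ref{cor:adjoint}.
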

\begin{proof}
Let $1 + x$ be an element in $J$ such  that
$\Psi_\beta([1 + x, 1 + y]) = 1$, for all $1 + y \in J$.
Since 
\begin{eqnarray*}
\Psi_\beta([1+x, 1+y]) & = & 
\psi_0(\mathrm{tr}_{A/F_0}(\beta(xy-yx)))\\
& = & 
\psi_0(\mathrm{tr}_{A/F_0}(\ad(\beta)(x)y)),
\end{eqnarray*}
we obtain 
$\ad(\beta)(x) \in \J^*_- = (\rad_{1-n}' \oplus \rad_{1-[(n+1)/2]}^\bot)\cap A_-$.
If we write $x^\bot$ for the $B^\bot$-part of $x$,
then we have
$x^\bot \in \rad_{[(n+1)/2]}^\bot$ and 
$\ad(\beta)(x) \in \rad_{1-[(n+1)/2]}^\bot$.
Lemma~\ref{lem:adjoint} implies that 
$x^\bot$ lies in $\rad_{[n/2]+1}^\bot$.
So we get $1 +x \in J_+$.
This completes the proof.
\end{proof}

It follows from Lemma~\ref{lem:nondeg} that
there exists a unique irreducible representation $\rho$ of $J$
which contains $\Psi_\beta$.
Moreover the restriction of $\rho$ to $J_+$ is a multiple
of $\Psi_\beta$
and $\dim \rho = [J:J_+]^{1/2}$,
which is a power of $q$.

As direct consequences of Propositions~\ref{prop:5_2} and
\ref{prop:E_surj},
we get the following two propositions.
\begin{prop}\label{prop:rho}
An irreducible smooth representation $\pi$ of $G$
contains $[\Lambda, n, n-1, \beta]$ if and only if
$\pi$ contains $\rho$.
\end{prop}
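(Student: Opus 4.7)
The plan is to deduce this as a direct corollary of Proposition~\ref{prop:5_2} combined with the uniqueness of the representation $\rho$ of $J$ containing $\Psi_\beta$, recorded immediately after Lemma~\ref{lem:nondeg}. By Proposition~\ref{prop:5_2}, $\pi$ contains $[\Lambda, n, n-1, \beta]$ if and only if $\pi|_{J_+}$ contains $\Psi_\beta$, so it suffices to show that $\pi|_{J_+}$ contains $\Psi_\beta$ if and only if $\pi|_{J}$ contains $\rho$.

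For the ``if'' direction, since the restriction $\rho|_{J_+}$ is a multiple of $\Psi_\beta$ (noted just before the statement of the proposition), any $\pi$ containing $\rho$ automatically contains $\Psi_\beta$ on restriction to $J_+$, and Proposition~\ref{prop:5_2} then yields that $\pi$ contains the stratum. For the ``only if'' direction, suppose $\pi$ contains the stratum; by Proposition~\ref{prop:5_2} again, $\pi|_{J_+}$ contains $\Psi_\beta$. Since $J$ is a compact open subgroup of $G$, the restriction $\pi|_{J}$ decomposes as a direct sum of irreducible smooth representations of $J$, so at least one irreducible summand $\tau$ of $\pi|_{J}$ must contain $\Psi_\beta$ on restriction to $J_+$. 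By the uniqueness of $\rho$ as the irreducible representation of $J$ whose restriction to $J_+$ contains $\Psi_\beta$, we conclude $\tau \simeq \rho$, so $\pi$ contains $\rho$.

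I do not anticipate a main obstacle here. The substantive content has already been carried out in the construction of $\rho$: the surjectivity of the adjoint action in Lemma~\ref{lem:adjoint}, its skew-symmetric version in Corollary~\ref{cor:adjoint}, the ability to conjugate $\psi_\gamma$ into $\Psi_\beta$ supplied by Proposition~\ref{prop:5c1_surj}, and the nondegeneracy of the alternating form $\theta$ on $J/J_+$ in Lemma~\ref{lem:nondeg}. The one remaining ingredient is complete reducibility of $\pi|_{J}$ into irreducible smooth $J$-representations, which is a standard fact for smooth representations of a $p$-adic group restricted to a compact open subgroup.
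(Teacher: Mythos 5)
Your proof is correct and fills in the standard argument that the paper leaves implicit: the paper simply asserts Propositions~\ref{prop:rho} and \ref{prop:H_surj2} as ``direct consequences'' of Propositions~\ref{prop:5_2} and \ref{prop:E_surj} without writing out the deduction. Your reasoning is exactly what is intended for Proposition~\ref{prop:rho} --- combine Proposition~\ref{prop:5_2} with the uniqueness of $\rho$ (from Lemma~\ref{lem:nondeg} and the Heisenberg-type construction) and complete reducibility of $\pi|_J$. You correctly observe that Proposition~\ref{prop:E_surj} is not actually needed for this direction of the argument; it is used for the companion Proposition~\ref{prop:H_surj2}.
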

\begin{prop}\label{prop:H_surj2}
$I_G(\rho) = J G_E J$.
\end{prop}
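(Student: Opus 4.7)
The strategy is to leverage the equality $I_G(\Psi_\beta) = J G_E J$ established in Proposition~\ref{prop:E_surj} and reduce the claim to proving $I_G(\rho) = I_G(\Psi_\beta)$. This will be done by two containments: one routine, one requiring a Heisenberg-type argument.

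For the easy containment $I_G(\rho) \subseteq I_G(\Psi_\beta)$, take $g \in I_G(\rho)$ with a nonzero $T \in \mathrm{Hom}_{J \cap J^g}(\rho, \rho^g)$. Restricting $T$ to $J_+ \cap J_+^g$, and recalling from the discussion after Lemma~\ref{lem:nondeg} that $\rho|_{J_+}$ is $\Psi_\beta$-isotypic and $\rho^g|_{J_+^g}$ is $\Psi_\beta^g$-isotypic, the restricted operator is a nonzero map between isotypic spaces for the characters $\Psi_\beta$ and $\Psi_\beta^g$ on their common domain. This forces $\Psi_\beta = \Psi_\beta^g$ on $J_+ \cap J_+^g$, i.e., $g \in I_G(\Psi_\beta)$.

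For the reverse containment $I_G(\Psi_\beta) \subseteq I_G(\rho)$, observe that $I_G(\rho)$ is automatically bi-$J$-invariant and, by Proposition~\ref{prop:E_surj}, $I_G(\Psi_\beta) = J G_E J$; hence it is enough to verify $G_E \subseteq I_G(\rho)$. Fix $g \in G_E$. Since $g \beta g^{-1} = \beta$, cyclicity of the trace in the definition of $\Psi_\beta$ shows that $\Psi_\beta$ is invariant under $g$-conjugation wherever both sides are defined. The plan is then to exploit the characterization of $\rho$ as the unique irreducible representation of $J$ whose restriction to $J_+$ is $\Psi_\beta$-isotypic (which flows from the nondegeneracy of $\theta$ in Lemma~\ref{lem:nondeg}) in order to construct an explicit nonzero element of $\mathrm{Hom}_{J \cap J^g}(\rho, \rho^g)$. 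Concretely, one chooses a subgroup $M$ with $J_+ \subseteq M \subseteq J$ such that $M/J_+$ is a Lagrangian in the symplectic space $(J/J_+, \theta)$, extends $\Psi_\beta$ to a character $\chi$ of $M$, realizes $\rho \simeq \mathrm{Ind}_M^J \chi$, and then uses Mackey's formula to reduce the intertwining of $\rho$ at $g$ to the intertwining of $\chi$; the $g$-invariance of $\Psi_\beta$ ensures that a suitable double coset contributes nontrivially.

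The main obstacle lies in this Heisenberg computation. A generic $g \in G_E$ does not normalize $J$ or $J_+$, so the symplectic structure on $J/J_+$ does not transport directly under $g$-conjugation. The delicate point is to control the image of $J \cap J^g$ inside $J/J_+$, verify its compatibility with the form $\theta$, and then assemble a nonzero intertwiner from the Lagrangian model of $\rho$. Once that construction is in place, combining the two containments with Proposition~\ref{prop:E_surj} yields $I_G(\rho) = I_G(\Psi_\beta) = J G_E J$, completing the proof.
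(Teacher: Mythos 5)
Your reduction is the right one, and it matches what the paper implicitly does: the containment $I_G(\rho)\subseteq I_G(\Psi_\beta)$ is exactly as you argue (restrict a nonzero intertwiner to $J_+\cap J_+^g$ and use that $\rho|_{J_+}$ is $\Psi_\beta$-isotypic), and the rest should come from Proposition~\ref{prop:E_surj} once one knows $I_G(\Psi_\beta)\subseteq I_G(\rho)$. The paper offers no detailed argument at all here: it records the proposition as a direct consequence of Propositions~\ref{prop:5_2} and \ref{prop:E_surj}, implicitly appealing to the standard fact that the Heisenberg representation attached to a character via a nondegenerate alternating form (Lemma~\ref{lem:nondeg}) has the same intertwining set as the character itself.

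That standard fact is precisely the part your proposal does not prove. You correctly reduce to showing $G_E\subseteq I_G(\rho)$, observe that each $g\in G_E$ fixes $\Psi_\beta$ under conjugation (which only re-proves $g\in I_G(\Psi_\beta)$, already contained in Proposition~\ref{prop:E_surj}), and then sketch a Lagrangian/Mackey construction of a nonzero element of $\mathrm{Hom}_{J\cap J^g}(\rho,\rho^g)$ — but you explicitly flag the essential difficulty (a generic $g\in G_E$ normalizes neither $J$ nor $J_+$, so the symplectic structure on $J/J_+$ does not transport) and leave it unresolved ("once that construction is in place"). Since when $n$ is even one has $J\supsetneq J_+$ and $\rho$ is genuinely of dimension $[J:J_+]^{1/2}>1$, this is the entire content of the proposition beyond Proposition~\ref{prop:E_surj}, so the proof is incomplete as it stands. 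To close the gap one must either carry out the Mackey-plus-counting argument that a nonzero intertwiner of $\Psi_\beta$ at $g$ forces a nonzero intertwiner of $\rho$ at $g$ (using nondegeneracy of $\theta$ to control the relevant isotropic subgroups), or simply invoke the known result of this type from the literature (the analogue of Bushnell--Kutzko's intertwining statement for Heisenberg extensions, or Stevens' Proposition 4.1 quoted in \S\ref{sec:max}), which is what the paper's "direct consequence" phrasing relies on.
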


We put $J' = G_E \cap J$.
Then we have $J' = G_E \cap J_+ = P_n(\Lambda)\cap B$.
\begin{prop}\label{prop:H_inj}
For $g \in G_E$, we have
$JgJ \cap G_E = J'gJ'$.
\end{prop}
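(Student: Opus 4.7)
The inclusion $J'gJ' \subseteq JgJ \cap G_E$ is immediate from $J' = J \cap G_E$. For the reverse, I would proceed by an inductive modification argument modelled on the proof of Proposition~\ref{prop:E_surj}. Given $h \in JgJ \cap G_E$, write $h = j_1 g j_2$ with $j_i = 1 + u_i \in J$ and decompose, via Proposition~\ref{prop:decomp_E}, $u_i = u_i' + u_i^\bot$ with $u_i' \in \rad'_n$ and $u_i^\bot \in \rad_{[(n+1)/2]}^\bot$. The aim is to drive both $u_i^\bot$ to zero, so that $j_1, j_2$ lie in $(1 + \rad'_n) \cap G = J'$.

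Since $h, g \in B$, projecting $h - g = u_1 g + g u_2 + u_1 g u_2$ onto $B^\bot$ (using $B \cdot B^\bot, B^\bot \cdot B \subseteq B^\bot$) yields
\[
u_1^\bot g(1 + u_2') + (1 + u_1')g u_2^\bot + (u_1^\bot g u_2^\bot)^\bot = 0.
\]
Setting $k = \min\{\nu_\Lambda(u_1^\bot), \nu_\Lambda(u_2^\bot)\} \geq [(n+1)/2]$, the cross terms $u_i' g u_j^\bot \in \rad_{n+k}^\bot$ and $(u_1^\bot g u_2^\bot)^\bot \in \rad_{2k}^\bot$ both fall into $\rad_{k+1}^\bot$ (since $n \geq 1$ and $k \geq 1$). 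Reducing modulo $\rad_{k+1}^\bot$ one obtains
\begin{equation}
u_1^\bot g + g u_2^\bot \equiv 0 \pmod{\rad_{k+1}^\bot}. \label{eq:plan-key}
\end{equation}

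I would then produce an element $m \in J$ with $g^{-1} m g \in J$ such that the substitution $(j_1, j_2) \mapsto (j_1 m,\, g^{-1} m^{-1} g \cdot j_2)$ preserves $j_1 g j_2 = h$ and strictly increases $k$. Concretely, take $m = \exp(\delta)$ for $\delta \in (B^\bot)_- \cap \rad_k^\bot$ whose leading term equals the skew-symmetric projection of $-u_1^\bot$ (which agrees with $-u_1^\bot$ modulo $\rad_{k+1}^\bot$ because $j_1 \in G$ forces $u_1$ to be skew modulo higher filtration, and $B^\bot$ is $\sigma$-stable). The exponential converges $p$-adically since the residual characteristic is odd and $\delta$ lies in a deep ideal; $\delta \in A_-$ ensures $m \in G$, and $\delta \in \rad_{[(n+1)/2]}^\bot$ ensures $m \in J$. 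Relation~\eqref{eq:plan-key} guarantees that simultaneously $g^{-1}\delta g \equiv -u_2^\bot \pmod{\rad_{k+1}^\bot}$, so the same $m$ also cancels $u_2^\bot$ to leading order. Composing the modifications across iterations yields a convergent infinite product in the compact group $J$, and in the limit $u_1^\bot, u_2^\bot \in \bigcap_N \rad_N^\bot = 0$, completing the proof.

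The main obstacle will be arranging $m \in J \cap g^{-1}Jg$ with the prescribed leading behavior at each step. Since $g \in G_E$ need not normalize $\Lambda$, the filtration on $B^\bot$ may shift under $\mathrm{Ad}(g)$, and one must verify that choosing $\delta$ sufficiently deep accommodates this shift while still achieving the leading cancellations of both $u_1^\bot$ and $u_2^\bot$ dictated by \eqref{eq:plan-key}. Lemma~\ref{lem:adjoint} and Corollary~\ref{cor:adjoint}, which control $\mathrm{ad}(\beta)$ on the graded pieces of $B^\bot$, provide the fundamental tool for quantifying these shifts; combined with the $\sigma$-stability arising in Proposition~\ref{prop:decomp_E}, they furnish the skew correcting element $\delta$ at each iteration.
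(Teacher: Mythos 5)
Your intuition---an iterative cancellation exploiting the decomposition $A = B \oplus B^\bot$ from Proposition~\ref{prop:decomp_E}---is reasonable, but there is a genuine gap, one you partially acknowledge but do not close. The difficulty is precisely the step you flag as ``the main obstacle'': producing $m \in J$ with $g^{-1}mg \in J$ having the prescribed leading term. Since $g \in G_E$ need not lie in $P_0(\Lambda)$, one can have $\nu_\Lambda(g) = l < 0$, and conjugation by $g$ shifts the filtration on $B^\bot$ downward. Your valuation bookkeeping already omits this shift: in the projected identity
\[
u_1^\bot g(1+u_2') + (1+u_1')gu_2^\bot + (u_1^\bot g u_2^\bot)^\bot = 0,
\]
the cross terms lie in $\rad_{n+l+k}^\bot$ and $\rad_{2k+l}^\bot$, not $\rad_{n+k}^\bot$ and $\rad_{2k}^\bot$, so your congruence \eqref{eq:plan-key} holds modulo $\rad_{k+l+1}^\bot$ at best. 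More seriously, when you pass from $\delta \equiv -u_1^\bot$ to $g^{-1}\delta g$, the a priori bound $\nu_\Lambda(g^{-1}\delta g) \geq 2l + \nu_\Lambda(\delta)$ does not put $g^{-1}\delta g$ into $\J$, and the comparison $g^{-1}\delta g \approx u_2^\bot$ deduced from \eqref{eq:plan-key} carries an error in $\rad_{k+2l+1}^\bot$, which for $l<0$ can be \emph{shallower} than $u_2^\bot$. So the modified $u_2^\bot$ need not become deeper, and the induction does not close. Invoking Lemma~\ref{lem:adjoint}/Corollary~\ref{cor:adjoint} does not repair this: those control $\ad(\beta)$ on the graded pieces, not $\mathrm{Ad}(g)$.

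The paper takes an entirely different route. It first reduces to the simple case (the semisimple case is then handled ``exactly as in \cite{St5} Lemma 2.6''), passes to the strict lattice sequence $\Lambda'$ with $\Lambda'(\Z) = \Lambda(\Z)$, and cites Bushnell--Kutzko \cite{BK1} Theorem (1.6.1), which asserts precisely the coset identity $(1+\rad_n)x(1+\rad_n)\cap B = (1+\rad_n\cap B)x(1+\rad_n\cap B)$ for $x \in B^\times$ in $\widetilde G$. That theorem is where the analogue of your filtration-shift problem is actually solved, and it uses the tame corestriction and the exactness properties of $\ad(\beta)$ on $B^\bot$ in a much more global way than a term-by-term modification. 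If you want to avoid citing BK1, you would essentially have to reconstruct the content of their (1.6.1); the obstacle you name is not a detail to ``verify'' but the heart of the matter.
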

\begin{proof}
Suppose that $[\Lambda, n, n-1, \beta]$ is simple.
Let $\Lambda'$ be a strict $\ri_F$-lattice sequence 
such that $\Lambda'(\Z) = \Lambda(\Z)$.
Then $\Lambda'$ is also an $\ri_E$-lattice sequence.
We write $e'$ for the $\ri_E$-period of $\Lambda'$ and 
$\nu_E$ for the normalized valuation on $E$.
Thus we have $\rad_n(\Lambda) = \rad_{n'}(\Lambda')$,
where $n' = -e' \nu_E(\beta)$.
Due to \cite{BK1} Theorem (1.6.1),
we obtain
$(1 + \rad_n(\Lambda))x (1+\rad_n(\Lambda)) \cap B
= (1 + \rad_n(\Lambda)\cap B)x (1+\rad_n(\Lambda)\cap B)$,
for $x \in B^\times$.

Now the proof is exactly same as that of \cite{St5} Lemma 2.6.
\end{proof}

For $x$ in $A$,
we denote by $x'$ its $B$-component
and by $x^\bot$ its $B^\bot$-component.
The next lemma is useful to check some relations on 
$\He(G//J, \rho)$.
\begin{lem}\label{lem:intertwine_J}
For $g \in JG_EJ$,
we have
$\nu_\Lambda(g^\bot) \geq \nu_\Lambda(g') +[(n+1)/2]$.
\end{lem}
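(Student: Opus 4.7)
The plan is to write any $g \in JG_EJ$ as $g = (1+u_1)h(1+u_2)$ with $u_1, u_2 \in \J$ and $h \in G_E$, and to compare the $B$- and $B^\bot$-components of $g$ directly. Splitting $u_i = u_i' + u_i^\bot$ along the decomposition $\J = \rad'_n \oplus \rad_{[(n+1)/2]}^\bot$, the key structural input is that $B^\bot$ is a $(B,B)$-bimodule inside $A$: for $b \in B$ and $x \in B^\bot$, cyclicity of the trace together with the fact that $B$ is an algebra gives $\tr_{A/F}(bx \cdot c) = \tr_{A/F}(x \cdot cb) = 0$ for every $c \in B$, so $bx, xb \in B^\bot$. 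Combined with Proposition~\ref{prop:decomp_E}, which ensures that the decomposition $A = B \oplus B^\bot$ is compatible with the filtration $\{\rad_k(\Lambda)\}$, this lets me decompose each term of
\[
g = h + u_1 h + h u_2 + u_1 h u_2
\]
into its $B$- and $B^\bot$-parts while tracking $\nu_\Lambda$.

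Setting $l = \nu_\Lambda(h)$ (a finite integer since $h \in G_E$ is invertible in $A$), the $B$-terms $h$, $u_1' h$, $h u_2'$, $u_1' h u_2'$ have valuations $l$, $\ge l+n$, $\ge l+n$, $\ge l+2n$; the $B^\bot$-terms $u_1^\bot h$, $h u_2^\bot$, $u_1' h u_2^\bot$, $u_1^\bot h u_2'$ have valuations $\ge l+[(n+1)/2]$, $\ge l+[(n+1)/2]$, $\ge l+n+[(n+1)/2]$, $\ge l+n+[(n+1)/2]$. The remaining cross term $u_1^\bot h u_2^\bot$ has total valuation $\ge l+2[(n+1)/2] \ge l+n$, and by Proposition~\ref{prop:decomp_E} both its $B$- and $B^\bot$-parts have valuation at least this bound. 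Hence $g' - h$ has valuation strictly greater than $l$, forcing $\nu_\Lambda(g') = l$, while every piece of $g^\bot$ has valuation $\ge l + [(n+1)/2]$, yielding $\nu_\Lambda(g^\bot) \ge \nu_\Lambda(g') + [(n+1)/2]$.

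The only mildly delicate point is the cross term $u_1^\bot h u_2^\bot$, which does not a priori lie cleanly in $B$ or in $B^\bot$; however, the elementary inequality $2[(n+1)/2] \ge n$ is precisely what is needed to absorb its contribution into the higher-order error in both the $B$- and $B^\bot$-parts. Everything else reduces to multiplicativity of $\nu_\Lambda$ together with the bimodule property recorded at the outset, so I do not anticipate a serious obstacle.
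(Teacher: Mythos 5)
Your proof is correct and rests on the same idea as the paper's: both exploit the factorization $g=(1+u_1)h(1+u_2)$ with $h\in G_E$ and the containment $\J\subset\rad_{[(n+1)/2]}$. The paper's version is slightly more streamlined --- it fixes $k=\nu_\Lambda(g)$, observes that the whole double coset $JgJ$ lies in $g+\rad_{k+[(n+1)/2]}$, and then picks $y\in JgJ\cap G_E$ (which has $y^\bot=0$) to get $g^\bot\in\rad_{k+[(n+1)/2]}^\bot$ in one stroke, avoiding the term-by-term expansion. Your expansion, the $(B,B)$-bimodule remark for $B^\bot$, and the observation that $2[(n+1)/2]\ge n$ absorbs the cross term $u_1^\bot h u_2^\bot$ are all correct, so this is a sound, if more verbose, rendition of the same argument.
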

\begin{proof}
Put $k = \nu_\Lambda(g)$.
Then, for any element $y$ in $JgJ$,
we have
$y \equiv g \pmod{\rad_{k+[(n+1)/2]}}$,
so that
$y \equiv g^\bot \pmod{B+ \rad_{k +[(n+1)/2]}}$.
Therefore if $g \in JG_EJ$,
then $g^\bot \in \rad_{k +[(n+1)/2]}^\bot$.
In particular, we have
$\nu_\Lambda(g^\bot) \geq \nu_\Lambda(g) +[(n+1)/2] 
> \nu_{\Lambda}(g)$, and hence
$\nu_\Lambda(g') = \nu_\Lambda(g)$.
This completes the proof.
\end{proof}

The contragradient representation $\widetilde{\rho}$ of $\rho$
is the unique irreducible representation of $J$ which contains
$\widetilde{\Psi_\beta} = \Psi_{-\beta}$.
Suppose that $G_E$ is compact.
Then we have $G_E \subset P_0(\Lambda)$
and $G_E$ is the Iwahori subgroup of itself.
The oscillator representation yields a representation $\omega$
of $G_E/
(G_E \cap P_1(\Lambda))$ on the space of $\widetilde{\rho}$
with the property
\begin{eqnarray}
\omega(g) \widetilde{\rho}(p) \omega(g^{-1})
= \widetilde{\rho}(Ad(g)(p)),\ p \in J,\ g \in G_E.
\end{eqnarray}

For $g \in G_E$,
let $f_g$ denote the element in $\He(G//J, \rho)$
such that $f_g(g) = \omega(g)$ and $\mathrm{supp}(f_g) = Jg J$,
and 
let $e_g$ denote the element in $\He(G_E//J', \Psi_\beta)$
such that $e_g(g) = 1$ and $\mathrm{supp}(e_g) = J'g J'$.
Then we obtain the following
\begin{thm}\label{thm:Hecke}
With the notations as above,
suppose that $G_E$ is compact.
Then the map
$\eta:
\He(G_E//J', \Psi_\beta) \rightarrow \He(G//J, \rho)$
defined by
$\eta(e_g) = f_g$
is a support preserving, $*$-isomorphism.
\end{thm}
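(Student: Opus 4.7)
The plan is to verify that $\eta$ is a well-defined, bijective algebra homomorphism preserving the $*$-involution. The compactness of $G_E$ makes $J'$ an open subgroup of $G_E$ of finite index, so both Hecke algebras are finite-dimensional and naturally indexed by the same finite set $J' \backslash G_E / J'$; the entire argument is then a comparison of structure constants between two finite-dimensional $*$-algebras.

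First, for bijectivity and support preservation: Proposition~\ref{prop:H_surj2} identifies the support of $\He(G//J,\rho)$ as $JG_EJ$, while Proposition~\ref{prop:H_inj} gives a bijection $J' \backslash G_E / J' \to J \backslash JG_EJ / J$. For each such double coset $[g]$, uniqueness of $\rho$ as the irreducible representation of $J$ containing $\Psi_\beta$ (Lemma~\ref{lem:nondeg}) yields
\[
\mathrm{Hom}_{J \cap J^g}(\rho, \rho^g)
\simeq \mathrm{Hom}_{J_+ \cap J_+^g}(\Psi_\beta, \Psi_\beta^g),
\]
which is at most one-dimensional and is generated by (the extension of) $\omega(g)$. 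Thus $f_g$ is well-defined by $f_g(g) = \omega(g)$ and bi-equivariance, $\{f_g\}$ with $g$ running over $J' \backslash G_E / J'$ is a basis of $\He(G//J,\rho)$, and $\eta$ is a linear bijection that preserves support.

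The main step is multiplicativity, $f_g * f_h = f_{gh}$. I will compute $(f_g * f_h)(x) = \int_G f_g(y) f_h(y^{-1}x) \, dy$ by decomposing the integral over a transversal of $JgJ/J$. Using that $G_E \subseteq B$ together with the intertwining identity
\[
\omega(g) \widetilde{\rho}(p) \omega(g^{-1}) = \widetilde{\rho}(\Ad(g)(p)),
\]
one may commute $\omega(g)$ past the $\widetilde{\rho}(j)$-factors for $j \in J$, at which point the bi-equivariance of $f_h$ collapses the remaining integral (with $\mathrm{vol}(J)=1$) and produces the value $\omega(g)\omega(h) = \omega(gh)$ at $x = gh$. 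Lemma~\ref{lem:intertwine_J} constrains the support of the convolution to double cosets of $JghJ$-type, matching that of $f_{gh}$. The analogous computation for $\He(G_E//J',\Psi_\beta)$ (where every element sits in $B$ and all values are scalars) gives $e_g * e_h = e_{gh}$, yielding $\eta(e_g * e_h) = f_{gh} = f_g * f_h$. The main technical obstacle lies precisely here: one must control the contribution of the $B^\bot$-components of elements of $J$ in the convolution on the $G$-side, and show that the intertwining property of $\omega$ combined with the filtration bounds from Lemma~\ref{lem:intertwine_J} makes those contributions reduce to the scalar convolution on the $G_E$-side.

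Finally, for the $*$-involution: the identity $\overline{\omega(g)} = \omega(g^{-1})$ holds for $g \in G_E$ because the $J$-invariant positive definite hermitian form on $\widetilde{W}$ that defines the involution on $\mathrm{End}_\C(\widetilde{W})$ is also invariant under $\omega(G_E)$, as is standard for the oscillator construction. Combined with $f^*(x) = \overline{f(x^{-1})}$ this gives $f_g^* = f_{g^{-1}}$, and similarly $e_g^* = e_{g^{-1}}$ on the $G_E$-side, so $\eta(e_g^*) = \eta(e_{g^{-1}}) = f_{g^{-1}} = f_g^*$, completing the $*$-isomorphism.
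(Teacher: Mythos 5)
Your outline matches the paper's: use Propositions~\ref{prop:H_surj2} and \ref{prop:H_inj} together with Lemma~\ref{lem:nondeg} for bijectivity and support-preservation, prove multiplicativity by computing the convolution directly, and read off the $*$-preservation from $e_g^* = e_{g^{-1}}$, $f_g^* = f_{g^{-1}}$. However, you stop short at the crucial step and flag the multiplicativity as an unresolved ``main technical obstacle,'' so the argument is incomplete as written.

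The missing observation is that $G_E$ normalizes $J$, and this is what trivializes the computation. Indeed $G_E\subset P_0(\Lambda)$ when $G_E$ is compact, so any $g\in G_E$ preserves the filtration $\{\rad_k(\Lambda)\}$; since $g$ also centralizes $\beta$ it preserves the decomposition $A = B\oplus B^\bot$, hence fixes $\J = \rad'_n\oplus\rad^\bot_{[(n+1)/2]}$ and $\J_+$ under conjugation. Therefore $JgJ = gJ = Jg$ for $g\in G_E$ (a single coset, so $[JgJ:J]=1$), and likewise $J_+$, $\Psi_\beta$ and hence $\rho$ are $G_E$-stable. With this in hand, $(f_g*f_h)(gh)=\int_J f_g(gj)\,f_h(j^{-1}h)\,dj = \int_J \omega(g)\widetilde\rho(j)\,\omega(h)\widetilde\rho(h^{-1}j^{-1}h)\,dj$, and commuting $\omega(h)$ past $\widetilde\rho(j)$ via the intertwining property $\omega(h)\widetilde\rho(p)\omega(h)^{-1}=\widetilde\rho(\Ad(h)(p))$ collapses the integrand to $\omega(g)\omega(h)$, giving $f_g*f_h = f_{gh}$; the $G_E$-side $e_g*e_h=e_{gh}$ is the same computation using $G_E$-invariance of $\Psi_\beta$. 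Your plan to ``decompose over a transversal of $JgJ/J$'' and to ``control the $B^\bot$-components'' via Lemma~\ref{lem:intertwine_J} is aiming at a non-problem: there is nothing to decompose because $JgJ/J$ is a singleton, and no residual $B^\bot$-contribution survives. Lemma~\ref{lem:intertwine_J} is designed for the non-compact $G_E$ situation of Section~\ref{sec:h_int}, where $G_E\not\subset P_0(\Lambda)$ and double cosets genuinely expand; in the compact case it is not needed. Once the normalization of $J$ by $G_E$ is made explicit, the rest of your argument goes through and agrees with the paper.
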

\begin{proof}
It is obvious that $\eta$ is an algebra homomorphism
which preserves supports.
Since $g \in G_E$ normalizes $(J_+, \Psi_\beta)$,
we have $\rho \simeq \rho^g$.
Hence the algebra $\He(G//J, \rho)$ is spanned by 
$f_g$, $g \in G_E$.
Then Propositions~\ref{prop:H_surj2} and \ref{prop:H_inj}
implies that 
$\eta$ is an isomorphism.
The $*$-preservation follows from 
$e_g^* = e_{g^{-1}}$ and $f_g^* = f_{g^{-1}}$,
for $g \in G_E$.
\end{proof}

\section{Fundamental strata for non 
quasi-split $\rU(4)$}
\setcounter{equation}{0}\label{strata}
\subsection{Non quasi-split $\rU(4)$}
From now on,
we assume that $F/F_0$ is quadratic unramified.
Let $\varepsilon$ denote a non-square unit in $\ri_0$.
Then we have $F = F_0[\e]$.
We can (and do) take a common uniformizer $\p$ of 
$F$ and $F_0$.

Let $V = F^4$ denote
the four dimensional $F$-space of column vectors.
We put $A = M_4(F)$ and $\widetilde{G} = A^\times$.
Let $\{e_i\}_{1 \leq i \leq 4}$ denote the standard $F$-basis 
of $V$,
and $E_{i j}$ the element in $A$ whose $(k, l)$
entry is $\delta_{ik}\delta_{jl}$.
We put
\begin{eqnarray}
H = E_{14} + \p E_{22} + E_{33} + E_{41} \in A
\end{eqnarray}
and define a nondegenerate $F/F_0$-hermitian form 
$h$ on $V$ by
$h(v, w) = {}^t \overline{v} H w$, for  $v, w \in V$.
Then $h$ induces 
an involution $\sigma$ on $A$ by the formula
$\sigma(X) = H^{-1} {}^t\overline{X}H$, for $X \in A$.
For $X = (X_{ij})\in A$,
we have
\begin{eqnarray}
\sigma(X) = 
\left(
\begin{array}{cccc}
\overline{X_{44}} & \p \overline{X_{24}} & 
\overline{X_{34}} & \overline{X_{14}}\\
\p^{-1}\overline{X_{42}} & \overline{X_{22}} & 
\p^{-1}\overline{X_{32}} & \p^{-1} \overline{X_{12}}\\
\overline{X_{43}} & \p \overline{X_{23}} &
\overline{X_{33}} & \overline{X_{13}} \\
\overline{X_{41}} & \p \overline{X_{21}} &
\overline{X_{31}} & \overline{X_{11}}
\end{array}
\right).
\end{eqnarray}

We put
$G = \{ g \in \widetilde{G}\ |\ \sigma(g) = g^{-1} \}$
and
$A_- = \{ X \in A\ |\ \sigma(X) = -X\} \simeq 
\mathrm{Lie}(G)$.
Then $G$ is the non quasi-split unramified 
unitary group in four variables
defined over $F_0$,
and $A_-$ consists of matrices of the form
\begin{eqnarray}
 \left(
\begin{array}{cccc}
Z & \p C & D & a\sqrt{\varepsilon}\\
M & b\e & Y & -\overline{C} \\ 
N & -\p \overline{Y} & c\e & -\overline{D} \\
d\e & -\p\overline{M} & -\overline{N} & -\overline{Z}
\end{array}
\right),\ 
C, D, M, N, Y, Z \in F,\ a,b,c,d \in F_0.
\end{eqnarray}
\subsection{A version of the existence of fundamental strata}
Define $\ri_F$-lattices $N_0$ and $N_1$ in $V$ by
\begin{eqnarray*}
& N_0 = \ri_F e_{1} \oplus \ri_F e_{2} \oplus \ri_F e_3 \oplus \ri_F e_4,\
N_1 = \ri_F e_{1} \oplus \ri_F e_{2} \oplus \ri_F e_3 \oplus \mi_F e_4.
\end{eqnarray*}
Then we have
\begin{eqnarray*}
& N_0^\# = \ri_F e_{1} \oplus \mi_F^{-1} e_{2} \oplus \ri_F e_3 \oplus \ri_F e_4,\
N_1^\# = \mi_F^{-1} e_{1} \oplus \mi_F^{-1} e_{2} \oplus \ri_F e_3 \oplus \ri_F e_4
\end{eqnarray*}
and obtain the following sequence of $\ri_F$-lattices in $V$:
\begin{eqnarray*}
\ldots \supsetneq N_0 \supsetneq N_1 \supsetneq \p N_1^\# \supsetneq
\p N_0^\# \supsetneq \p N_0 \supsetneq \ldots.
\end{eqnarray*}
Recall from
\cite{Morris-2} that
a self-dual $\ri_F$-lattice sequence $\Lambda$ in $V$
is called standard if 
$\Lambda(\Z) = \{\Lambda(i)\ |\ i \in \Z\}$ 
is 
contained in the set
$\{ \p^m N_0,\ \p^m N_1,\ \p^m N_0^\#,\ \p^m N_1^\#\ |\ m \in \Z\}$.
By {\it op. cit.} Proposition 1.10,
every self-dual $\ri_F$-lattice sequence 
is a $G$-conjugate of a standard one.

Let $\Lambda$ be a $C$-sequence in $V$
and $L$ an $\ri_F$-lattice in $V$.
Since $d(\Lambda)$ is odd,
we see that
$L \in \Lambda(2\Z)$
if and only if $L^\# \in \Lambda(2\Z+1)$.
So it is easy to observe that
there are just the following 8 standard  $C$-sequences 
$\Lambda_i$, $1 \leq i \leq 8$ in $V$,
up to translation:

(I) $C$-sequences with $\Lambda(2\Z) \cap \Lambda(2\Z+1)
= \emptyset$:
\begin{eqnarray}\label{eq:st_20}
\Lambda_1(2i) = \p^i N_0,\
\Lambda_1(2i+1) = \p^{i+1} N_0^\#,\ i \in \Z;
\end{eqnarray}
\begin{eqnarray}\label{eq:st_21}
\Lambda_2(2i) = \p^i N_1,\ 
\Lambda_2(2i+1) = \p^{i+1} N_1^\#,\ i \in \Z;
\end{eqnarray}
\begin{eqnarray*}
\Lambda_3(4i) = \p^i N_0,\
\Lambda_3(4i+1) = \p^i N_1,
\end{eqnarray*}
\begin{eqnarray}\label{eq:st_4}
\Lambda_3(4i+2) = \p^{i+1} N_1^\#,\ 
\Lambda_3(4i+3) = \p^{i+1} N_0^\#,\ i \in \Z.
\end{eqnarray}
(II) $C$-sequences with 
$\Lambda(2\Z)\cap \Lamdba(2\Z+1) \neq \emptyset$
and $\Lambda(2\Z) \neq \Lambda(2\Z+1)$:
\begin{eqnarray*}
\Lambda_4(6i) = \Lambda_4(6i+1)= \p^i N_0,\
\Lambda_4(6i+2) = \p^i N_1,
\end{eqnarray*}
\begin{eqnarray}\label{eq:c0}
\Lambda_4(6i+3) = \p^{i+1} N_1^\#,\ 
\Lambda_4(6i+4) = \Lambda_4(6i+5)= \p^{i+1} N_0^\#,\ i \in \Z;
\end{eqnarray}
\begin{eqnarray*}
\Lambda_5(6i-1)  =\p^i N_0,\
\Lambda_5(6i) = \Lambda_5(6i+1)= \p^i N_1,
\end{eqnarray*}
\begin{eqnarray}\label{eq:c1}
\Lambda_5(6i+2) = \Lambda_5(6i+3) = \p^{i+1} N_1^\#,\ 
\Lambda_5(6i+4) = \p^{i+1} N_0^\#,\ i \in \Z.
\end{eqnarray}
(III) $C$-sequences with $\Lambda(2\Z) = \Lambda(2\Z+1)$:
\begin{eqnarray}\label{eq:ns_20}
\Lambda_6(4i)= \Lambda_6(4i+1) = \p^i N_0,\
\Lambda_6(4i+2) = \Lambda_6(4i+3) = \p^{i+1} N_0^\#,\ i \in \Z;
\end{eqnarray}
\begin{eqnarray}\label{eq:ns_21}
\Lambda_7(4i)= \Lambda_7(4i+1) = \p^i N_1,\
\Lambda_7(4i+2) = \Lambda_7(4i+3) = \p^{i+1} N_1^\#,\ i \in \Z;
\end{eqnarray}
\begin{eqnarray*}
\Lambda_8(8i) = \Lambda_8(8i+1) = \p^i N_0,\
\Lambda_8(8i+2) = \Lambda_8(8i+3) = \p^i N_1,
\end{eqnarray*}
\begin{eqnarray}\label{eq:nt_4}
\Lambda_8(8i+4) = \Lambda_8(8i+5) = \p^{i+1} N_1^\#,\ 
\Lambda_8(8i+6) = \Lambda_8(8i+7) = \p^{i+1} N_0^\#,\ i \in \Z.
\end{eqnarray}
\begin{thm}\label{thm:strict}
Let $\pi$ be an irreducible smooth 
representation of $G$ of positive 
level.
Then $\pi$ contains a fundamental skew stratum
$[\Lambda, n, n-1, \beta]$ which satisfies one of the 
following conditions:
\begin{enumerate}
\item[$(i)$] $\Lambda = \Lambda_i$, for some $1 \leq i \leq 5$ 
and
$(e(\Lambda), n) =2$;

\item[$(ii)$] $\Lambda = \Lambda_i$, for some $1 \leq i \leq 3$
and $(e(\Lambda), n) = 1$.
\end{enumerate}
\end{thm}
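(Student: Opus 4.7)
The plan is to begin from an arbitrary fundamental skew stratum guaranteed to exist inside $\pi$, normalize its lattice sequence to one of the eight standard $C$-sequences, and then perform a case-by-case reduction until the pair $(\Lambda, n)$ satisfies either (i) or (ii). First I would invoke \cite{St3} Theorem~2.11 to obtain some fundamental skew stratum $[\Lambda, n, n-1, \beta]$ contained in $\pi$. By \cite{Morris-2} Proposition~1.10 every self-dual $\ri_F$-lattice sequence in $V$ is a $G$-conjugate of a standard one, so after replacing $(\Lambda, \beta)$ by a $G$-conjugate—which preserves containment in $\pi$—I may assume $\Lambda \in \{\Lambda_1, \ldots, \Lambda_8\}$. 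Since the level of $\pi$ is an invariant, every subsequent replacement must preserve $n/e(\Lambda)$.

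Next I would dispose of the non-strict class (III) sequences $\Lambda_6, \Lambda_7, \Lambda_8$. Each is a ``doubling'' of one of the strict sequences $\Lambda_1, \Lambda_2, \Lambda_3$; for instance $\Lambda_6(2i) = \Lambda_6(2i+1) = \Lambda_1(i)$. A direct computation from the definition of $\rad_k(\Lambda)$ yields
\[
\rad_{2m}(\Lambda_6) = \rad_m(\Lambda_1), \qquad \rad_{2m+1}(\Lambda_6) = \rad_{m+1}(\Lambda_1),
\]
and the analogous identities for the other two pairs. From these: if $n$ is odd then $\beta \in \rad_{-n}(\Lambda_6)$ forces $y_\beta = \p^n \beta^4$ into $\rad_2(\Lambda_1) \subset \rad_1(\Lambda_6)$, so $\phi_\beta(X) = X^4$, contradicting fundamentality; if $n = 2m$ is even then the character $\psi_\beta$ on $P_n(\Lambda_6) = P_m(\Lambda_1)$ and the element $y_\beta$ coincide with those of the stratum $[\Lambda_1, m, m-1, \beta]$, which is therefore fundamental, of the same level $m/2$, and contained in $\pi$. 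This reduces $\Lambda_6 \to \Lambda_1$, $\Lambda_7 \to \Lambda_2$, $\Lambda_8 \to \Lambda_3$.

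The remaining ``bad'' cases are $\Lambda_3$ with $4 \mid n$ and $\Lambda_4, \Lambda_5$ with $(e(\Lambda), n) \in \{1, 3, 6\}$. The integer-level subcases ($4 \mid n$ for $\Lambda_3$ and $6 \mid n$ for $\Lambda_4, \Lambda_5$) can be handled by an analogous collapsing argument down to $\Lambda_1$ or $\Lambda_2$ with $(e, n') = 2$. For $\Lambda_4, \Lambda_5$ with $n$ coprime to $6$ one first establishes the filtration identities $\rad_0(\Lambda_4) = \rad_0(\Lambda_3)$ and $\rad_1(\Lambda_4) = \rad_1(\Lambda_3)$, and then, tracking the action of $\beta$ on the $\Lambda_3$-chain, checks that $y_\beta = \p^n \beta^6$ is forced into $\rad_1(\Lambda_4)$, so $\phi_\beta(X) = X^4$ and no fundamental stratum exists. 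The delicate case is $(e(\Lambda_4), n) = 3$, where $n$ is an odd multiple of $3$ and $y_\beta = \p^{n/3}\beta^2$ can legitimately be a non-nilpotent element of the parahoric quotient $\rad_0(\Lambda_4)/\rad_1(\Lambda_4)$; here I would analyze the eigenspace decomposition of $y_\beta$ modulo $\mi_F$ to produce a $\beta$-stable orthogonal $F$-splitting of $V$ along which $\Lambda_4$ restricts to period-$2$ self-dual sublattice sequences, yielding a fundamental skew stratum on $\Lambda_1$ or $\Lambda_2$ with $n' = n/3$ odd (case (ii)).

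The main obstacle is this last case of $\Lambda_4, \Lambda_5$ with $(e(\Lambda), n) = 3$. One must carefully verify that the eigenspace decomposition of $y_\beta$ modulo $\mi_F$ respects both the hermitian structure (so the splitting is orthogonal, using $\beta \in A_-$ and the self-duality of $\Lambda$ with $d(\Lambda)$ odd) and the $\ri_F$-lattice structure (so each factor admits a period-$2$ self-dual sublattice sequence in Morris's classification). Once this structural lemma is in place, the remaining bookkeeping—checking that the new stratum is fundamental, of the correct level, and genuinely contained in $\pi$—follows from the filtration identities used in the non-strict cases above.
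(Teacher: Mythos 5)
Your argument has a genuine gap at the very first normalization step. After invoking \cite{St3} Theorem~2.11, you have a fundamental skew stratum on \emph{some} self-dual lattice sequence, and \cite{Morris-2} Proposition~1.10 lets you conjugate $\Lambda$ into the family of \emph{standard} self-dual sequences — but that family is strictly larger than the eight $C$-sequences $\Lambda_1, \dots, \Lambda_8$. Standard self-dual sequences need not satisfy the $C$-sequence constraints ($\Lambda(2i+1)\supsetneq\Lambda(2i+2)$, $e(\Lambda)$ even, $d(\Lambda)$ odd). So the assertion ``I may assume $\Lambda\in\{\Lambda_1,\ldots,\Lambda_8\}$'' is not justified by the citation you give, and the entire case analysis that follows rests on it.

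This is precisely the content that the paper outsources to \cite{Kariyama} Proposition~3.1.1, which asserts directly that $\pi$ contains a fundamental skew stratum $[\Lambda, n, n-1, \beta]$ with $\Lambda$ a $C$-sequence \emph{and} $(e(\Lambda), n) = 2$. Using that input, the only cases left are $\Lambda\in\{\Lambda_1,\dots,\Lambda_8\}$ with $n$ even and $n/2$ odd: for $1\le i\le 5$ this is already condition (i), and for $6\le i\le 8$ the doubling identity $\rad_k(\Lambda_{i-5})=\rad_{2k-1}(\Lambda_i)=\rad_{2k}(\Lambda_i)$ hands you a fundamental stratum on $\Lambda_{i-5}$ with $n'=n/2$ odd, giving (ii). None of the cases you spend the bulk of your proposal on — $\Lambda_3$ with $4\mid n$, $\Lambda_4,\Lambda_5$ with $(e,n)\in\{1,3,6\}$, the ``delicate'' $(e(\Lambda_4),n)=3$ subcase and its eigenspace decomposition — ever arise. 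To make your approach go through from Stevens's theorem alone, you would essentially have to re-prove the Kariyama--Miyauchi result in this specific rank, which is a substantial additional task that your sketch does not carry out (and the arguments you outline for it, e.g.\ forcing $y_\beta$ into $\rad_1$, are stated but not verified). I recommend citing \cite{Kariyama} Proposition~3.1.1 at the outset; the remainder of your argument for the non-strict sequences $\Lambda_6,\Lambda_7,\Lambda_8$ then matches the paper's.
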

\begin{proof}
Let $\pi$ be an irreducible smooth representation
of $G$ of positive level.
Thanks to \cite{Kariyama} Proposition 3.1.1,
$\pi$ 
contains a fundamental skew stratum $[\Lambda, n, n-1, \beta]$ such that
$\Lambda$ is a $C$-sequence and 
$(e(\Lambda), n) = 2$.
After $G$-conjugation,
we may assume $\Lambda$ is 
one of $\Lambda_i$, $1 \leq i \leq 8$.
If $\Lambda = \Lambda_i$, for $1 \leq i \leq 5$,
then there is nothing left to prove.

Suppose $\Lambda = \Lambda_i$, for $6 \leq i \leq 8$.
Put $\Lambda' = \Lambda_{i-5}$.
Then $\Lambda$ is the double of $\Lambda'$,
whence
$\rad_k(\Lambda') = \rad_{2k-1}(\Lambda) = \rad_{2k}(\Lambda)$, $k \in \Z$.
Since 
skew strata $[\Lambda, n, n-1, \beta]$ and
$[\Lambda', n/2, n/2-1, \beta]$ correspond to 
the same character $\psi_\beta$ of the group
$P_{n}(\Lambda) = P_{n/2}(\Lambda')$,
$\pi$ contains a fundamental skew stratum $[\Lambda', n/2, n/2-1, \beta]$,
which satisfies the condition (ii).
\end{proof}

We list up $\Lambda$ and $n$ of the
fundamental strata $[\Lambda, n, n-1, \beta]$
satisfying one 
of the conditions in Theorem~\ref{thm:strict}.
\begin{eqnarray}
\begin{array}{|c|c|c|c|c|}\hline
\Lambda & e(\Lambda) & d(\Lambda) & n & 
n/e(\Lambda)   \\
\hline
\Lambda_1 & 2 & -1 & 2m & m  \\
\hline
\Lambda_2 & 2 & -1 & 2m & m   \\
\hline
\Lambda_3 & 4 & -1 & 4m-2 & m-1/2  \\
\hline
\Lambda_4 & 6 &  -1 &
\begin{array}{c}6m-2\\ 6m-4\end{array} & 
\begin{array}{c}m-1/3\\ m-2/3\end{array}  \\
\hline
\Lambda_5 & 6 & -3 & 
\begin{array}{c}6m-2\\ 6m-4\end{array} & 
\begin{array}{c}m-1/3\\ m-2/3\end{array}  \\
\hline
\Lambda_1 & 2 & -1 & 2m-1 & m-1/2  \\
\hline
\Lambda_2 & 2 & -1 &  2m-1 & m-1/2  \\
\hline
\Lambda_3 & 4 & -1 & 
\begin{array}{c}4m-1\\ 4m-3\end{array} & 
\begin{array}{c}m-1/4\\ m-3/4\end{array} \\
\hline
\end{array}
\end{eqnarray}

\subsection{Filtrations}
We give an explicit description of
the filtrations on $A$ induces by 
standard $C$-sequences $\Lambda_i$, for $1 \leq i \leq 5$.
Since $\{\rad_k(\Lambda_i)\}_{k \in \Z}$ is periodic,
it suffices to describe
$\rad_k(\Lambda_i)$, $0 \leq k \leq e(\Lambda_i)-1$.

The sequences $\Lambda_1$ and $\Lambda_2$
correspond to the standard filtrations of 
maximal compact subgroups
of $G$:
\begin{eqnarray}
\rad_0(\Lambda_1) = 
\left(
\begin{array}{c|c|cc}
\ri_F & \mi_F & \ri_F & \ri_F\\ \hline
\ri_F & \ri_F & \ri_F & \ri_F\\ \hline
\ri_F & \mi_F & \ri_F & \ri_F\\ 
\ri_F & \mi_F & \ri_F & \ri_F\\ 
\end{array}
\right),\
\rad_1(\Lambda_1) = 
\left(
\begin{array}{c|c|cc}
\mi_F & \mi_F & \mi_F & \mi_F\\ \hline
\ri_F & \mi_F & \ri_F & \ri_F\\ \hline
\mi_F & \mi_F & \mi_F & \mi_F\\ 
\mi_F & \mi_F & \mi_F & \mi_F\\ 
\end{array}
\right);
\end{eqnarray}
\begin{eqnarray}
\rad_0(\Lambda_2) = 
\left(
\begin{array}{cc|c|c}
\ri_F & \ri_F & \ri_F & \mi_F^{-1}\\ 
\ri_F & \ri_F & \ri_F & \mi_F^{-1}\\ \hline
\mi_F & \mi_F & \ri_F & \ri_F\\  \hline
\mi_F & \mi_F & \mi_F & \ri_F\\ 
\end{array}
\right),\
\rad_1(\Lambda_2) = 
\left(
\begin{array}{cc|c|c}
\mi_F & \mi_F & \ri_F & \ri_F\\ 
\mi_F & \mi_F & \ri_F & \ri_F\\ \hline
\mi_F & \mi_F & \mi_F & \ri_F\\  \hline
\mi_F^2 & \mi_F^2 & \mi_F & \mi_F\\ 
\end{array}
\right).
\end{eqnarray}

The sequence $\Lambda_3$
corresponds to the standard filtration of 
the Iwahori subgroup of $G$:
\begin{eqnarray*}
\rad_0(\Lambda_3) = 
\left(
\begin{array}{cc|cc}
\ri_F & \mi_F & \ri_F & \ri_F\\ 
\ri_F & \ri_F & \ri_F & \ri_F\\ \hline
\mi_F & \mi_F & \ri_F & \ri_F\\ 
\mi_F & \mi_F & \mi_F & \ri_F\\ 
\end{array}
\right),\
\rad_1(\Lambda_3) = 
\left(
\begin{array}{cc|cc}
\mi_F & \mi_F & \ri_F & \ri_F\\ 
\ri_F & \mi_F & \ri_F & \ri_F\\ \hline
\mi_F & \mi_F & \mi_F & \ri_F\\ 
\mi_F & \mi_F & \mi_F & \mi_F\\ 
\end{array}
\right),\
\end{eqnarray*}
\begin{eqnarray}
\rad_2(\Lambda_3) = 
\left(
\begin{array}{cc|cc}
\mi_F & \mi_F & \mi_F & \ri_F\\ 
\mi_F & \mi_F & \ri_F & \ri_F\\ \hline
\mi_F & \mi_F & \mi_F & \mi_F\\ 
\mi_F & \mi_F^2 & \mi_F & \mi_F\\ 
\end{array}
\right),\
\rad_3(\Lambda_3) = 
\left(
\begin{array}{cc|cc}
\mi_F & \mi_F & \mi_F & \mi_F\\ 
\mi_F & \mi_F & \mi_F & \ri_F\\ \hline
\mi_F & \mi_F^2 & \mi_F & \mi_F\\ 
\mi_F^2 & \mi_F^2 & \mi_F & \mi_F\\ 
\end{array}
\right).
\end{eqnarray}

Sequences $\Lambda_4$ and $\Lambda_5$ give
non-standard filtrations of the Iwahori subgroup
of $G$:
\begin{eqnarray*}
\rad_0(\Lambda_4) = \rad_0(\Lambda_5) = 
\rad_0(\Lambda_3),\ 
\rad_1(\Lambda_4) = \rad_1(\Lambda_5) = 
\rad_1(\Lambda_3),
\end{eqnarray*}
\begin{eqnarray*}
\rad_2(\Lambda_4) = 
\left(
\begin{array}{cc|cc}
\mi_F & \mi_F & \mi_F & \ri_F\\ 
\ri_F & \mi_F & \ri_F & \ri_F\\ \hline
\mi_F & \mi_F & \mi_F & \mi_F\\ 
\mi_F & \mi_F & \mi_F & \mi_F\\ 
\end{array}
\right),\
\rad_3(\Lambda_4) = 
\left(
\begin{array}{cc|cc}
\mi_F & \mi_F & \mi_F & \mi_F\\ 
\mi_F & \mi_F & \ri_F & \ri_F\\ \hline
\mi_F & \mi_F & \mi_F & \mi_F\\ 
\mi_F & \mi_F^2 & \mi_F & \mi_F\\ 
\end{array}
\right),\
\end{eqnarray*}
\begin{eqnarray}\label{eq:filtration_4}
\rad_4(\Lambda_4) = 
\left(
\begin{array}{cc|cc}
\mi_F & \mi_F & \mi_F & \mi_F\\ 
\mi_F & \mi_F & \mi_F & \ri_F\\ \hline
\mi_F & \mi_F^2 & \mi_F & \mi_F\\ 
\mi_F & \mi_F^2 & \mi_F & \mi_F\\ 
\end{array}
\right),\
\rad_5(\Lambda_4) = 
\left(
\begin{array}{cc|cc}
\mi_F & \mi_F^2 & \mi_F & \mi_F\\ 
\mi_F & \mi_F & \mi_F & \mi_F\\ \hline
\mi_F & \mi_F^2 & \mi_F & \mi_F\\ 
\mi_F^2 & \mi_F^2 & \mi_F & \mi_F\\ 
\end{array}
\right),\
\end{eqnarray}
\begin{eqnarray*}
\rad_2(\Lambda_5) = 
\left(
\begin{array}{cc|cc}
\mi_F & \mi_F & \ri_F & \ri_F\\ 
\mi_F & \mi_F & \ri_F & \ri_F\\ \hline
\mi_F & \mi_F & \mi_F & \ri_F\\ 
\mi_F & \mi_F^2 & \mi_F & \mi_F\\ 
\end{array}
\right),\
\rad_3(\Lambda_5) = 
\left(
\begin{array}{cc|cc}
\mi_F & \mi_F & \mi_F & \ri_F\\ 
\mi_F & \mi_F & \ri_F & \ri_F\\ \hline
\mi_F & \mi_F & \mi_F & \mi_F\\ 
\mi_F^2 & \mi_F^2 & \mi_F & \mi_F\\ 
\end{array}
\right),\
\end{eqnarray*}
\begin{eqnarray}
\rad_4(\Lambda_5) = 
\left(
\begin{array}{cc|cc}
\mi_F & \mi_F & \mi_F & \ri_F\\ 
\mi_F & \mi_F & \mi_F & \ri_F\\ \hline
\mi_F & \mi_F^2 & \mi_F & \mi_F\\ 
\mi_F^2 & \mi_F^2 & \mi_F & \mi_F\\ 
\end{array}
\right),\
\rad_5(\Lambda_5) = 
\left(
\begin{array}{cc|cc}
\mi_F & \mi_F & \mi_F & \mi_F\\ 
\mi_F & \mi_F & \mi_F & \ri_F\\ \hline
\mi_F^2 & \mi_F^2 & \mi_F & \mi_F\\ 
\mi_F^2 & \mi_F^2 & \mi_F^2 & \mi_F\\ 
\end{array}
\right).
\end{eqnarray}

\section{Representations of level $n/4$}\label{sec:over4}
\setcounter{equation}{0}
Let $n$ be a positive integer such that $(n, 4) = 1$.
In this section,
we give a classification of  
the irreducible smooth representations of $G$
of level $n/4$.
Theorem~\ref{thm:strict} says that
such a representation contains 
a fundamental skew stratum $[\Lambda_3, n, [n/2], \beta]$.
By Proposition~\ref{prop:pure},
$[\Lambda_3, n, n-1, \beta]$ is simple
and $E = F[\beta]$ is a totally ramified extension 
of degree 4 over $F$.

\begin{prop}\label{prop:conj_4}
Let 
$[\Lambda_3, n, [n/2], \beta]$ and
$[\Lambda_3, n, [n/2], \gamma]$ be fundamental skew strata
contained in some irreducible smooth representation of $G$.
Then 
$(P_{[n/2]+1}(\Lambda_3), \psi_\gamma)$ is 
a $P_{0}(\Lambda_3)$-conjugate of 
$(P_{[n/2]+1}(\Lambda_3), \psi_\beta)$.
\end{prop}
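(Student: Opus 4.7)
The plan is to use the intertwining relation recalled in Section~\ref{sec:preliminaries} to obtain $g \in G$ conjugating $\gamma$ close to $\beta$, apply the rigidity Lemma~\ref{lem:recover} to force $g$ to normalize the filtration of $\Lambda_3$, and then invoke the conjugacy theory of equivalent simple skew strata together with the smallness of the intertwining of $\psi_\beta$ to produce a $P_0(\Lambda_3)$-conjugator.

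First, since both skew strata are contained in a common irreducible smooth representation of $G$, the intertwining relation from Section~\ref{sec:preliminaries} yields $g \in G$ with
\[
\Ad(g)(\gamma) \in \beta + \rad_{1-n}(\Lambda_3)_-;
\]
set $\beta' = \Ad(g)(\gamma)$. Since $(n, e(\Lambda_3)) = (n, 4) = 1$ and $\Lambda_3$ is strict, Proposition~\ref{prop:pure} implies that both $[\Lambda_3, n, n-1, \gamma]$ and $[g^{-1}\Lambda_3, n, n-1, \gamma]$ are skew simple strata with $F[\gamma]/F$ totally ramified of degree $4$; in particular $V$ is one-dimensional over $F[\gamma]$ and the induced hermitian form is anisotropic. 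As $g^{-1}\Lambda_3$ and $\Lambda_3$ share the same $\ri_F$-period $4$ and dual-index $-1$, Lemma~\ref{lem:recover} forces
\[
g^{-1}\Lambda_3 = \Lambda_3 + k \quad \text{for some } k \in \Z.
\]
Hence $g$ normalizes the filtration $\{\rad_j(\Lambda_3)\}_{j \in \Z}$ of $A$, and in particular $P_0(\Lambda_3)$ and $P_{[n/2]+1}(\Lambda_3)$.

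Next, the pair $[\Lambda_3, n, n-1, \beta']$, $[\Lambda_3, n, n-1, \beta]$ consists of equivalent simple skew strata on the same lattice sequence; the conjugacy theorem for such strata (Bushnell--Kutzko in the skew version due to Stevens) provides $p \in P_0(\Lambda_3)$ with
\[
\Ad(p)(\beta') \equiv \beta \pmod{\rad_{-[n/2]}(\Lambda_3)_-}.
\]
Setting $h := pg \in N_G(P_0(\Lambda_3))$, the element $h$ conjugates the character $\psi_\gamma$ to $\psi_\beta$ on $P_{[n/2]+1}(\Lambda_3)$.

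The main obstacle is to replace $h$ by an element of $P_0(\Lambda_3)$. By the remark following the definition of skew semisimple strata, $F[\beta]/F[\beta]^\sigma$ is unramified, so $G_E$ is a compact, $1$-dimensional unramified unitary group contained in $P_0(\Lambda_3)$; equation~\eqref{eq:intertwining} then gives
\[
I_G(\psi_\beta) = G_E P_{[(n+1)/2]}(\Lambda_3) \subset P_0(\Lambda_3),
\]
and similarly $I_G(\psi_\gamma) \subset P_0(\Lambda_3)$. A direct inspection of the extended affine Weyl piece $N_G(P_0(\Lambda_3))/P_0(\Lambda_3)$ for the $C$-sequence $\Lambda_3$ in non quasi-split $\rU(4)$ then rules out any nontrivial translation contribution to $h$ modulo these intertwiners, yielding $h \in P_0(\Lambda_3)$. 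The hard part is this final normalizer analysis, which relies on the explicit combinatorics of $\Lambda_3$ together with the compactness of $G_E$.
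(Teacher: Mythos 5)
Your very first step is not what the intertwining relation gives. Containment of both skew strata in a common irreducible representation produces $g \in G$ with
\[
\bigl(\beta + \rad_{-[n/2]}(\Lambda_3)_-\bigr) \cap \Ad(g)\bigl(\gamma + \rad_{-[n/2]}(\Lambda_3)_-\bigr) \neq \emptyset,
\]
that is, a \emph{single element} $\delta$ lying in both cosets. It does not give $\Ad(g)(\gamma) \in \beta + \rad_{1-n}(\Lambda_3)_-$; that claim is both stronger than the intertwining condition (wrong modulus, $\rad_{1-n}$ versus $\rad_{-[n/2]}$) and of the wrong shape, because a priori $\Ad(g)$ need not preserve the $\Lambda_3$-filtration. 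The correct use of the data is to work with $\delta$ itself: by Proposition~\ref{prop:pure} both $[\Lambda_3, n, [n/2], \delta]$ and $[g\Lambda_3, n, [n/2], \delta]$ are simple, and since $e(\Lambda_3) = e(g\Lambda_3)$ and $d(\Lambda_3) = d(g\Lambda_3)$ (an isometry $g$ commutes with taking $\#$), Lemma~\ref{lem:recover} pins down the lattice sequence completely and yields $\Lambda_3 = g\Lambda_3$ \emph{exactly} --- not merely up to a translation $k \in \Z$, as you assert. Hence $g \in P_0(\Lambda_3)$ already, and this same $g$ conjugates $\psi_\gamma$ to $\psi_\beta$.

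Because of the misreading of both the intertwining condition and Lemma~\ref{lem:recover}, the remainder of your argument compensates with machinery that is not needed and not really justified here: the conjugacy theorem for equivalent simple strata (which would have to be quoted and checked in the self-dual setting), the identification of $I_G(\psi_\beta)$ via equation~\eqref{eq:intertwining}, and the final ``inspection of the extended affine Weyl piece'' that you yourself flag as the hard part. That last step is left unsubstantiated --- showing the translation part of $h$ is trivial would require an actual argument, not just an appeal to the combinatorics of $\Lambda_3$. None of this is necessary once you notice that the element $\delta$ in the intersection, together with the $d$-invariance under isometries, forces $g \in P_0(\Lambda_3)$ directly.
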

\begin{proof}
By assumption, there is $g \in G$ such that
\begin{eqnarray*}
(\beta + \rad_{-[n/2]}(\Lambda_3)_-)\cap 
\Ad(g)(\gamma + \rad_{-[n/2]}(\Lambda_3)_-)\neq \emptyset.
\end{eqnarray*}
Take $\delta
\in (\beta + \rad_{-[n/2]}(\Lambda_3)_-)\cap 
\Ad(g)(\gamma + \rad_{-[n/2]}(\Lambda_3)_-)$.
Then we get simple strata
$[\Lambda_3, n, [n/2], \delta]$
and $[g\Lambda_3, n, [n/2], \delta]$.
Recall that $d(\Lambda_3) = d(g\Lambda_3)$.
By Lemma~\ref{lem:recover},
we have $\Lambda_3 = g\Lambda_3$ and 
hence $g \in P_{0}(\Lambda_3)$.
This completes the proof.
\end{proof}

By Proposition~\ref{prop:conj_4},
the set of equivalence classes of level $n/4$ representations
of $G$ is the disjoint union
$\bigcup_\beta \Irr(G)^{(P_{[n/2]+1}(\Lambda_3), \psi_\beta)}$,
where $\beta$ runs over the $P_{0}(\Lambda_3)$-conjugacy 
classes of elements in $\rad_{-n}(\Lambda_3)_-
/\rad_{-[n/2]}(\Lambda_3)_-$
such that $[\Lambda_3, n, n-1, \beta]$ is fundamental.
For each $\beta$,
the set $\Irr(G)^{(P_{[n/2]+1}(\Lambda_3), \psi_\beta)}$
is classified via the result in \S \ref{sec:max}.

\section{Representations of level $n/3$}\label{sec:r_3}
Let $n$ be a positive integer such that $(n, 3) = 1$.
In this section,
we classify the irreducible smooth representations of $G$
of level $n/3$.
It follows from Theorem~\ref{thm:strict}
that every irreducible smooth representation of $G$
of level $n/3$ contains 
a fundamental skew stratum $[\Lambda, 2n, n, \beta]$
such that $\Lambda = \Lambda_4$ or $\Lambda_5$.

Let $\Lambda = \Lambda_4$
and let $[\Lambda, 2n, n, \beta]$ be a fundamental 
skew stratum.
The characteristic polynomial $\phi_\beta(X)$ depends only on 
the coset $\beta + \rad_{1-2n}(\Lambda)_-$.
By (\ref{eq:filtration_4}),
we can take $\beta \in \rad_{-2n}(\Lambda)_-$
to be a band matrix modulo $\rad_{1-2n}(\Lambda_4)_-$,
so we have
$\phi_\beta(X) = (X-a\e)^3 X$,
for $a \in k_0^\times$.

Using Hensel's Lemma,
we can lift this to
$\Phi_\beta(X) = f_a(X) f_0(X)$
where $f_a(X)$, $f_0(X)$ are monic,
$f_a(X) \bmod{\mi_F} = (X-a\e)^3$
and $f_0(X)  \bmod{\mi_F} =  X$.
As in the proof of \cite{St3} Theorem 4.4,
when we put
$V^{a} = \ker f_a(y_\beta)$, $V^{0} = \ker f_0(y_\beta)$,
the skew stratum $[\Lambda, 2n, n, \beta]$
is split with respect to the $F$-splitting
$V = V^{a} \bot V^{0}$.

For $b \in \{ a, 0\}$,
we write 
$\Lambda^{b}(i) = \Lambda(i)\cap V^{b}$,
$i \in \Z$,
and $\beta_b = \beta|_{V^{b}}$.
Since $X$ does not divide $\phi_{\beta_a}(X)
= (X-a\e)^3$,
it follows from \cite{BK2} Proposition 3.5 that
$\beta_a \Lambda^a(i) = \Lambda^a(i-2n)$,
for $i \in \Z$.
Since $(2n, e(\Lambda)) = 2$,
we obtain
$[\Lambda^a(i):\Lambda^a(i+1)]
= [\Lambda^a(i+2): \Lambda^a(i+3)]$,
for $i \in \Z$.

Recall $\Lambda(0) = \Lambda(1)$.
So we have 
$\Lambda^a(2i) = \Lambda^a(2i+1)$, for $i \in \Z$.
Since $\Lambda(2) \neq \Lambda(3)$,
we get 
$\Lambda^0(2) \neq \Lambda^0(3)$.
This implies $\Lambda^0(3) = \p \Lambda^0(2)$ and 
$\Lambda^0(-3) = \ldots  = \Lambda^0(1) 
= \Lambda^0(2)$
because
$\dim_F V^{0} = 1$ and $e(\Lambda^0) = 6$.

\begin{lem}\label{lem:over3}
Let $[\Lambda_4, 2n, n, \beta]$ be a fundamental 
skew stratum.
Then the space $(V^0, h|_{V^0})$ represents 1.
\end{lem}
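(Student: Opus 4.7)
The plan is to exploit the one-dimensionality of $V^0$ to show that the lattice $L := \Lambda^0(2)$ is self-dual inside $(V^0, h|_{V^0})$, and then to deduce from this together with the fact that $F/F_0$ is unramified that $h|_{V^0}$ represents $1$.

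First I would gather the data about $\Lambda^0$ collected just before the statement of the lemma: $\Lambda^0(i) = L$ for $-3 \leq i \leq 2$, and $\Lambda^0(3) = \p L$. Since the $F$-splitting $V = V^a \bot V^0$ is orthogonal and $\Lambda_4$ is self-dual with $d(\Lambda_4) = -1$, the component $\Lambda^0$ is a self-dual $\ri_F$-lattice sequence in $(V^0, h|_{V^0})$ with $d(\Lambda^0) = -1$ (see the remark following the definition of split skew strata). Plugging $i = 2$ into the duality relation $\Lambda^0(i)^{\#} = \Lambda^0(-1-i)$ then yields $L^{\#} = \Lambda^0(-3) = L$, so $L$ is self-dual.

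Next, writing $L = \ri_F v$ for some nonzero $v \in V^0$, a quick computation using the hermitian property shows $L^{\#} = h(v, v)^{-1} L$, so self-duality forces $h(v, v) \in \ri_F^\times \cap F_0 = \ri_0^\times$. To conclude, I need to know that every element of $\ri_0^\times$ is a norm from $F^\times$: once $\alpha \in \ri_F^\times$ with $\alpha \bar{\alpha} = h(v, v)$ is in hand, the vector $\alpha^{-1} v$ has hermitian square $1$, which is exactly the statement that $(V^0, h|_{V^0})$ represents $1$.

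The closing step, surjectivity of the norm $\ri_F^\times \to \ri_0^\times$, is standard in the unramified case: on residue fields the norm is $x \mapsto x^{q+1}$, which is surjective onto $k_0^\times$, and Hensel's lemma lifts this to the units. I do not expect any genuine obstacle here; the only point that requires some care is the identification $L^{\#} = \Lambda^0(-3)$, which relies on orthogonality of the splitting and the inheritance $d(\Lambda^0) = d(\Lambda_4)$ recorded in the preliminaries.
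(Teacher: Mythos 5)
Your argument is correct and follows essentially the same route as the paper: identify a self-dual lattice in $(V^0, h|_{V^0})$ from the duality relation $\Lambda^0(i)^\# = \Lambda^0(-1-i)$ (the paper takes $i=0$ and observes $\Lambda^0(0)^\#=\Lambda^0(-1)=\Lambda^0(0)$, while you take $i=2$; since $\Lambda^0(-3)=\cdots=\Lambda^0(2)$ these are the same lattice), and then invoke $\dim_F V^0 = 1$ together with $F/F_0$ unramified. The paper simply states that these two facts "imply that the form represents $1$" without writing it out; your explicit computation of $L^\# = h(v,v)^{-1}L$ and the use of surjectivity of the norm $\ri_F^\times \to \ri_0^\times$ fills in exactly that omitted step.
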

\begin{proof}
The dual lattice of $\Lambda^{0}(0)$ with respect to 
$(V^{0}, h|_{V^{0}})$ is $\Lambda^{0}(-1) = \Lambda^0(0)$.
Since $\dim_F V^{0} = 1$ and $F$ is unramified over $F_0$,
this implies that
the form $f|_{V^{0}}$ represents 1.
This completes the proof.
\end{proof}

We can apply the construction of a splitting 
to $\Lambda_5$.
The proof of the next lemma is similar to that of
Lemma~\ref{lem:over3}.
\begin{lem}\label{lem:over35}
Let $[\Lambda_5, 2n, n, \beta]$ be a fundamental 
skew stratum.
Then the space $(V^0, h|_{V^0})$ does not represent 1.
\end{lem}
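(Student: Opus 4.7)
The plan is to mirror the proof of Lemma~\ref{lem:over3} with $\Lambda_4$ replaced by $\Lambda_5$. The structural difference that forces the opposite conclusion is $d(\Lambda_5) = -3$ versus $d(\Lambda_4) = -1$: the shift of the self-duality offset by $2$ flips the parity of $\nu_F(h(v,v))$ for a generator $v$ of $\Lambda^0(0)$.

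First I would pin down the lattice sequence $\Lambda^0 = \Lambda_5 \cap V^0$. From the explicit description of $\Lambda_5$, the quotients $\Lambda_5(i)/\Lambda_5(i+1)$ are trivial for $i \equiv 0, 2 \pmod 6$ and one-dimensional over $k_F$ for $i \equiv 1, 3, 4, 5 \pmod 6$. Exactly as in the $\Lambda_4$ case, $\beta_a$ is invertible on $V^a$ with $\beta_a\Lambda^a(i) = \Lambda^a(i-2n)$ by \cite{BK2} Proposition 3.5, and since $(2n, e(\Lambda_5)) = 2$ the index $[\Lambda^a(i):\Lambda^a(i+1)]$ depends only on $i \bmod 2$. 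Combined with $\Lambda_5(i) = \Lambda^a(i) \oplus \Lambda^0(i)$ and non-negativity of each factor, the triviality of $\Lambda_5(0)/\Lambda_5(1)$ forces $[\Lambda^a(i):\Lambda^a(i+1)] = 1$ at every even $i$, and the identity $[\Lambda^a(0):\p\Lambda^a(0)] = |k_F|^3$ then forces $[\Lambda^a(i):\Lambda^a(i+1)] = |k_F|$ at every odd $i$. Hence $\Lambda^0$ has a unique nontrivial step per period, at $i = 4$. Setting $L = \Lambda^0(0)$ we obtain $\Lambda^0(i) = L$ for $-1 \leq i \leq 4$ and $\Lambda^0(5) = \p L$.

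Next I would invoke the self-duality of $\Lambda^0$ noted at the end of Section~\ref{sec:preliminaries}: $d(\Lambda^0) = d(\Lambda_5) = -3$, so $\Lambda^0(i)^\# = \Lambda^0(-3-i)$. Taking $i = 0$ gives $L^\# = \Lambda^0(-3) = \p^{-1}\Lambda^0(3) = \p^{-1}L$. Fixing a generator $v$ of $L$ and setting $c = h(v,v) \in F_0^\times$, a direct computation of the dual in rank one yields $L^\# = c^{-1}L$, whence $\nu_F(c) = 1$. Since $\p$ is a common uniformizer of $F$ and $F_0$, the valuation of $c$ in $F_0$ is also $1$. Now $(V^0, h|_{V^0})$ represents $1$ if and only if $c^{-1}$ lies in the image of $N_{F/F_0} : F^\times \to F_0^\times$; because $F/F_0$ is unramified of degree $2$, this image consists precisely of the elements of $F_0^\times$ with even $F_0$-valuation. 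Since the $F_0$-valuation of $c^{-1}$ equals $-1$, which is odd, $c^{-1}$ is not a norm and $(V^0, h|_{V^0})$ does not represent $1$.

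The main obstacle is the combinatorial bookkeeping in the first step: distributing the four unit jumps of $\Lambda_5$ per period between $\Lambda^a$, which must jump three times, and $\Lambda^0$, which must jump once, in the exact pattern dictated by the $\bmod 2$ periodicity coming from $\beta_a$. Once the location of the jump of $\Lambda^0$ is pinned down, the rest is immediate, and the contrast with Lemma~\ref{lem:over3} is explained entirely by the parity of $d(\Lambda)$.
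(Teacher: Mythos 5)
Your proof is correct and follows exactly the route the paper has in mind for Lemma~\ref{lem:over35}, namely mirroring the proof of Lemma~\ref{lem:over3}: locate the unique jump of $\Lambda^0$ within a period (here at $i\equiv 4$ rather than $i\equiv 2$), use $d(\Lambda_5)=-3$ to get $\Lambda^0(0)^{\#}=\Lambda^0(-3)=\p^{-1}\Lambda^0(0)$, and conclude from the unramifiedness of $F/F_0$ that the odd valuation of $h(v,v)$ obstructs representing $1$. The extra jump-counting bookkeeping you carry out is just a more explicit version of the paper's one-line observation preceding Lemma~\ref{lem:over3}.
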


\begin{prop}\label{prop:disjoint_3}
Let 
$[\Lambda_4, 2n, n, \beta]$ and $[\Lambda_5, 2n, n, \gamma]$
be fundamental skew strata.
Then there are no irreducible smooth representations of 
$G$ which contain both of them.
\end{prop}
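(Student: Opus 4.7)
The plan is to derive a contradiction from the hypothesis that some irreducible smooth representation $\pi$ of $G$ contains both $[\Lambda_4, 2n, n, \beta]$ and $[\Lambda_5, 2n, n, \gamma]$, by producing a single one-dimensional hermitian subspace of $V$ whose hermitian type is simultaneously forced to represent $1$ and not to represent $1$.

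First, I would restrict $\psi_\beta$ from $P_{n+1}(\Lambda_4)$ to $P_{2n}(\Lambda_4)$ and $\psi_\gamma$ from $P_{n+1}(\Lambda_5)$ to $P_{2n}(\Lambda_5)$, so that $\pi$ also contains the fundamental skew strata $[\Lambda_4, 2n, 2n-1, \beta]$ and $[\Lambda_5, 2n, 2n-1, \gamma]$, whose characteristic polynomials are unchanged. The intertwining relation recalled in Section~\ref{sec:preliminaries} for fundamental skew strata of the form $[\Lambda, n, n-1, \beta]$ then supplies $g \in G$ and an element
\[
\delta \in (\beta + \rad_{1-2n}(\Lambda_4)_-) \cap \Ad(g)\bigl(\gamma + \rad_{1-2n}(\Lambda_5)_-\bigr).
\]
Since $g\Lambda_5$ is a $C$-sequence with $e(g\Lambda_5) = e(\Lambda_5) = 6$ and $d(g\Lambda_5) = d(\Lambda_5) = -3$, we obtain two fundamental skew strata $[\Lambda_4, 2n, 2n-1, \delta]$ and $[g\Lambda_5, 2n, 2n-1, \delta]$ built from the same $\delta$.

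Second, because $e(\Lambda_4) = e(g\Lambda_5) = 6$ and the arithmetic factor $k = (6, 2n) = 2$ is the same in both cases, the element $y_\delta = \p^n \delta^3$, the characteristic polynomial $\phi_\delta(X) = (X - a\e)^3 X$, and its Hensel factorisation $\Phi_\delta(X) = f_a(X) f_0(X)$ with $f_a(X) \equiv (X - a\e)^3$, $f_0(X) \equiv X \pmod{\mi_F}$ are common to the two strata. By the splitting construction used at the start of Section~\ref{sec:r_3} (via \cite{St3} Theorem~4.4), both strata then split with respect to the \emph{same} orthogonal $F$-decomposition $V = V^a_\delta \bot V^0_\delta$, where $V^0_\delta = \ker f_0(y_\delta)$. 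Hence the one-dimensional hermitian space $(V^0_\delta, h|_{V^0_\delta})$ is shared by the two viewpoints.

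Finally, Lemma~\ref{lem:over3} applied to $[\Lambda_4, 2n, 2n-1, \delta]$ gives that $h|_{V^0_\delta}$ represents $1$, whereas Lemma~\ref{lem:over35} applied to $[g\Lambda_5, 2n, 2n-1, \delta]$ gives that $h|_{V^0_\delta}$ does \emph{not} represent $1$, which is the desired contradiction. The main obstacle is that Lemma~\ref{lem:over35} is stated for the standard sequence $\Lambda_5$ and must be transported to the $G$-conjugate $g\Lambda_5$; this should be routine since the proof depends only on the $\ri_F$-period, on $d(\Lambda)$, and on the pattern of jumps of the $C$-sequence, all of which are preserved by the $G$-action, and since $g \in G$ is an isometry of $(V, h)$ so that the induced self-dual lattice sequence $(g\Lambda_5)^0$ in $(V^0_\delta, h|_{V^0_\delta})$ retains the same anisotropic type as $\Lambda_5^0$ does in its original ambient space.
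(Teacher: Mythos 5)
Your proposal is correct and takes essentially the same route as the paper: intertwining of the two contained strata yields a common element $\delta$, the splitting construction attaches to it a one-dimensional space $V^0 = \ker f_0(y_\delta)$, and Lemmas~\ref{lem:over3} and \ref{lem:over35} force $h|_{V^0}$ both to represent and not to represent $1$. The only (cosmetic) difference is that the paper conjugates the element back, working with $[\Lambda_5, 2n, n, \Ad(g^{-1})\delta]$ and observing that its splitting space is $g^{-1}V^0$ with $g$ an isometry, so Lemma~\ref{lem:over35} applies verbatim to the standard $\Lambda_5$ and the transport to $g\Lambda_5$ that you flag as the main obstacle is not needed.
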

\begin{proof}
Suppose
there is an irreducible smooth representation of 
$G$ which contain both of them.
Then
there is $g \in G$ such that
$(\beta + \rad_{-n}(\Lambda_4)_-)\cap 
\Ad(g)(\gamma  + \rad_{-n}(\Lambda_5)_-)$ is non-empty.
Let $\delta$ be an element of
$(\beta + \rad_{-n}(\Lambda_4)_-)\cap 
\Ad(g)(\gamma  + \rad_{-n}(\Lambda_5)_-)$.
Then we obtain fundamental skew strata
$[\Lambda_4, 2n, n, \delta]$ and 
$[\Lambda_5, 2n, n, \Ad(g^{-1})\delta]$.

Let $V = V^a \bot V^0$ denote the $F$-splitting 
obtained by
applying the above construction to $[\Lambda_4, 2n, n, \delta]$,
and $V = W^a \bot W^0$ the same object 
for $[\Lambda_5, 2n, n, \Ad(g^{-1})\delta]$.
Then we have $W^0 = g^{-1} V^0$.
This contradicts Lemmas~\ref{lem:over3}
and \ref{lem:over35}.
This completes the proof.
\end{proof}

Let $[\Lambda, 2n, n, \beta]$ be a fundamental skew stratum
such that $\Lambda = \Lambda_4$ or $\Lambda_5$.
We put $\Lambda'(i) = \Lambda^a(2i)$, for $i \in \Z$.
Then $\Lambda'$ is a strict  $\ri_F$-lattice sequence
in $V^a$ of period 3.
Proposition~\ref{prop:pure} says that
the stratum $[\Lambda', n, n-1, \beta_a]$ 
is simple and $E_a = F[\beta_a]$ is a
totally ramified extension of degree 3 over $F$.
It is easy to observe that
the stratum $[\Lambda_a, 2n, 2n-1, \beta_a]$
is also simple.

The equation $\phi_{\beta_0}(X) = X$
implies $\beta_0 \in \rad_{1-2n}(\Lambda^0)_-$.
We can replace $\beta_0$ with 0.
So $[\Lambda, 2n, n, \beta]$ is a skew semisimple stratum
in $A$
such that the $G$-centralizer $G_E$ of $\beta$
is a maximal compact torus.

\begin{prop}\label{prop:conj_3}
Let $\Lambda$ be $\Lambda_4$ or $\Lambda_5$.
Let 
$[\Lambda, 2n, n, \beta]$ and
$[\Lambda, 2n, n, \gamma]$ be fundamental skew strata
occurring in some irreducible smooth representation of $G$.
Then
$(P_{n+1}(\Lambda), \psi_\gamma)$ is 
a $P_{0}(\Lambda)$-conjugate of 
$(P_{n+1}(\Lambda), \psi_\beta)$.
\end{prop}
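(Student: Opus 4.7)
The plan is to adapt the short proof of Proposition~\ref{prop:conj_4} by invoking Lemma~\ref{lem:recover} in its skew semisimple form in place of the simple uniqueness used there. First I would use the hypothesis together with (1.1) to produce $g \in G$ and an element
\[
\delta \in (\beta + \rad_{-n}(\Lambda)_-) \cap \Ad(g)(\gamma + \rad_{-n}(\Lambda)_-).
\]
Since $n \geq 1$, $\rad_{-n}(\Lambda) \subset \rad_{1-2n}(\Lambda)$, so the refined strata $[\Lambda, 2n, 2n-1, \delta]$ and $[g\Lambda, 2n, 2n-1, \delta]$ are equivalent, respectively, to $[\Lambda, 2n, 2n-1, \beta]$ and to $\Ad(g)[\Lambda, 2n, 2n-1, \gamma]$; they are in particular fundamental, with common characteristic polynomial $(X-a\e)^3 X$ for some $a \in k_0^\times$.

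Next I would apply the splitting construction at the start of the section to $\delta$. Because $(e(\Lambda), 2n) = (e(g\Lambda), 2n) = 2$, the element $y_\delta = \p^n \delta^3$ is intrinsic to $\delta$, independent of whether we view it via $\Lambda$ or via $g\Lambda$. The Hensel factorization of $\Phi_\delta(X)$ and the resulting $F$-splitting $V = V^a \bot V^0 = \ker f_a(y_\delta) \bot \ker f_0(y_\delta)$ are therefore common to both lattice sequences. After replacing $\delta$ by an equivalent representative adapted to this splitting, we obtain skew semisimple strata $[\Lambda, 2n, 2n-1, \delta]$ and $[g\Lambda, 2n, 2n-1, \delta]$ attached to the same orthogonal decomposition.

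The core step is then the invocation of Lemma~\ref{lem:recover}. Its anisotropy hypothesis holds here: $\dim_F V^a = 3$ and $[E_a:F] = 3$ (totally ramified, by Proposition~\ref{prop:pure}) show that $(V^a, h_{E_a})$ is $1$-dimensional over $E_a$ and hence anisotropic, while $(V^0, h|_{V^0})$ is $1$-dimensional over $F$ and therefore also anisotropic. Furthermore, any $g \in G$ preserves $e(\Lambda) = 6$ and $d(\Lambda)$, since $\sigma(g) = g^{-1}$ forces $(g\Lambda(i))^\# = g \Lambda(i)^\#$. Lemma~\ref{lem:recover} therefore reconstructs both $\Lambda$ and $g\Lambda$ from the same quadruple $(\delta, 2n, 6, d(\Lambda))$, giving $\Lambda = g\Lambda$ and hence $g \in P_0(\Lambda)$. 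Finally, restricting $\delta \in \beta + \rad_{-n}(\Lambda)_-$ and $\delta \in \Ad(g)(\gamma + \rad_{-n}(\Lambda)_-)$ to $P_{n+1}(\Lambda)$ yields $\psi_\beta = \psi_\gamma^g$, which is the required $P_0(\Lambda)$-conjugacy.

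The main obstacle I anticipate is step two, namely the replacement of $\delta$ by a single representative that respects the common $F$-splitting simultaneously for $\Lambda$ and $g\Lambda$, so that both refined strata qualify as skew semisimple in the strict sense of Definition~2.5. This should work because the splitting is an intrinsic invariant of the class of $\delta$ modulo $\rad_{1-2n}(\Lambda)_-$ and because $\Ad(g)$ will send the respective ideals into each other once the final equality $\Lambda = g\Lambda$ is obtained; but making this rigorous requires a careful unpacking of the Stevens-type splitting argument implicitly invoked at the start of the section.
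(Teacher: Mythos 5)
Your argument is correct and is essentially the paper's own proof: the paper likewise takes $\delta$ in the intersection, notes that Lemma~\ref{lem:recover} applies here because $[\Lambda^a, 2n, n, \delta_a]$ is maximal simple and $\dim_F V^0 = 1$ (exactly your anisotropy verification), and concludes $\Lambda = g\Lambda$ as in Proposition~\ref{prop:conj_4}. The obstacle you flag at the end is not a genuine one: the proof of Lemma~\ref{lem:recover} uses only the common splitting and the anisotropy of $(V^a, h_{E_a})$ and $(V^0, h|_{V^0})$, not semisimplicity of the refined strata, which is precisely how the paper dispenses with this point in one line.
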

\begin{proof}
Note that Lemma~\ref{lem:recover} holds
for any fundamental skew stratum $[\Lambda, 2n, n, \beta]$
of this section because $[\Lambda^a, 2n, n, \beta_a]$ is 
maximal simple and 
$\dim_F V^0 = 1$.
This is exactly as in the proof of Proposition~\ref{prop:conj_4}.
\end{proof}

By Proposition~\ref{prop:disjoint_3} and \ref{prop:conj_3},
the set of equivalence classes of irreducible smooth representations
of $G$ of level $n/3$ is the disjoint union 
$\bigcup_\beta \Irr(G)^{(P_{n+1}(\Lambda_4), \psi_\beta)}
\cup \bigcup_\gamma \Irr(G)^{(P_{n+1}(\Lambda_5), \psi_\gamma)}$,
where $\beta$ (respectively $\gamma$) runs over 
the $P_{0}(\Lambda_4)$ (respectively $P_{0}(\Lambda_5)$)-conjugacy classes of elements in $\rad_{-2n}(\Lambda_4)_-
/\rad_{-n}(\Lambda_4)_-$
(respectively $\rad_{-2n}(\Lambda_5)_-/
\rad_{-n}(\Lambda_5)_-$)
which generate a fundamental skew stratum.
Each set in this union is classified by the results in \S \ref{sec:max}.
\begin{rem}
The restriction $[\Lambda, 2n, 2n-1, \beta]$ is not always
semisimple.
But (\ref{eq:intertwining}) holds since $\dim_F V^0 = 1$,
so we can apply the arguments in \S \ref{sec:max}.
\end{rem}

\section{Representations of half-integral level}\label{sec:h_int}
\setcounter{equation}{0}\label{Hecke}
\subsection{Semisimplification of skew strata}\label{subsec:norm}
Let $m$ be a positive integer.
By Theorem~\ref{thm:strict},
an irreducible representation $\pi$ of $G$ of level $m -1/2$
contains a fundamental skew stratum 
$[\Lambda, n, n-1, \beta]$
which satisfies one of the following conditions:
\begin{enumerate}
\item[(\ref{sec:h_int}-i)] $\Lambda = \Lambda_3$ and $n = 4m-2$;

\item[(\ref{sec:h_int}-ii)] $\Lambda = \Lambda_1$ and $n = 2m-1$;

\item[(\ref{sec:h_int}-iii)] $\Lambda = \Lambda_2$ and $n = 2m-1$.
\end{enumerate}
\noindent
We also consider skew strata $[\Lambda, n, n-1, \beta]$
with one 
of the following conditions:
\begin{enumerate}
\item[(\ref{sec:h_int}-ii')] $\Lambda = \Lambda_4$ and $n = 6m-3$;

\item[(\ref{sec:h_int}-iii')] $\Lambda = \Lambda_5$ and $n = 6m-3$.
\end{enumerate}

Let
$[\Lambda_3, 4m-2, 4m-3, \beta]$ be a fundamental skew stratum.
Up to equivalence of skew strata,
we can choose $\beta \in \rad_{2-4m}(\Lambda_3)$
as follows:
\begin{eqnarray}\label{eq:beta_h}
\beta = \p^{-m}
\left(
\begin{array}{c|cc|c}
0 & 0 & 0 & a\e\\ \hline
0 & 0 & Y & 0\\ 
0 & -\p \overline{Y} & 0 & 0\\ \hline
\p d\e & 0 & 0 & 0
\end{array}
\right), Y \in \ri_F,\ a, d \in \ri_0.
\end{eqnarray}
Then
we have 
$\phi_\beta(X) = (X -ad\varepsilon)^2 (X +Y\overline{Y})^2
\pmod{\mi_F}$.
Since we are assuming 
$[\Lambda_3, 4m-2, 4m-3, \beta]$ is fundamental,
we have
$Y\overline{Y} \in \ri_0^\times$ or $ad \in \ri_0^\times$.
We decompose this case into the following 
three cases:
\begin{enumerate}
\item[(\ref{sec:h_int}-ia)]: $ad\varepsilon \equiv -Y\overline{Y}
\pmod{\mi_F}$;

\item[(\ref{sec:h_int}-ib)] $ad\varepsilon \not\equiv -Y\overline{Y}
\pmod{\mi_F}$ and $ad \in \ri_0^\times$;

\item[(\ref{sec:h_int}-ic)] $ad \in \mi_0$.
\end{enumerate}
\begin{prop}\label{prop:reduction}
With the notation as above,
suppose that an irreducible smooth representation $\pi$ of $G$
contains a fundamental skew stratum $[\Lambda_3, 4m-2, 4m-3, \beta]$
of type (\ref{sec:h_int}-ic).
Then $\pi$ contains 
a fundamental skew strata 
of type (\ref{sec:h_int}-ii') or (\ref{sec:h_int}-iii').
\end{prop}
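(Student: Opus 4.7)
The plan is to exhibit the required new stratum by retaining the element $\beta$ of (\ref{eq:beta_h}) but replacing the underlying lattice sequence $\Lambda_3$ with $\Lambda_4$ or $\Lambda_5$, exploiting the fact that in case (\ref{sec:h_int}-ic) the leading contribution to $\beta$ lies entirely in the middle $(2,3)$, $(3,2)$ block. Since $[\Lambda_3, 4m-2, 4m-3, \beta]$ is fundamental and $ad \in \mi_0$, we necessarily have $Y\overline{Y} \in \ri_0^\times$.

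First I split case (\ref{sec:h_int}-ic) into Case A where $a \in \mi_0$, and Case B where $a \in \ri_0^\times$ and $d \in \mi_0$. Reading off the $(1,4)$, $(2,3)$, $(3,2)$, $(4,1)$ entries of $\rad_3(\Lambda_4)$ from the filtration tables in Section \ref{strata} and multiplying by $\p^{-m}$, one verifies in Case A that each nonzero entry of $\beta$ lies in the corresponding entry of $\rad_{3-6m}(\Lambda_4) = \p^{-m}\rad_3(\Lambda_4)$, so $\beta \in \rad_{-(6m-3)}(\Lambda_4)_-$; the key point is that the $(1,4)$-entry of $\rad_3(\Lambda_4)$ is $\mi_F$, which accommodates $a\e \p^{-m}$ precisely when $a \in \mi_0$. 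The analogous comparison against $\rad_3(\Lambda_5)$, whose $(4,1)$-entry is $\mi_F^2$, handles Case B and yields $\beta \in \rad_{-(6m-3)}(\Lambda_5)_-$.

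Next I check the new stratum is fundamental. For both $\Lambda_4$ and $\Lambda_5$ one has $(e(\Lambda), 6m-3) = 3$, giving $y_\beta = \p^{2m-1}\beta^2$, the same formula as for $\Lambda_3$. A direct computation of $\beta^2$ produces $y_\beta = \mathrm{diag}(ad\varepsilon, -Y\overline{Y}, -Y\overline{Y}, ad\varepsilon)$, so the characteristic polynomial is $\phi_\beta(X) \equiv X^2(X + Y\overline{Y})^2 \pmod{\mi_F}$, which is not $X^4$ because $Y\overline{Y}$ is a unit. Hence $[\Lambda_4, 6m-3, 6m-4, \beta]$ is a fundamental skew stratum of type (\ref{sec:h_int}-ii') in Case A, and $[\Lambda_5, 6m-3, 6m-4, \beta]$ is one of type (\ref{sec:h_int}-iii') in Case B.

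Finally I show that $\pi$ contains the new stratum. Comparing the filtration tables entry by entry, $\rad_3(\Lambda_4)$ differs from $\rad_2(\Lambda_3)$ only at position $(1,4)$, and $\rad_3(\Lambda_5)$ differs from $\rad_2(\Lambda_3)$ only at position $(4,1)$, with the $\Lambda_4$ or $\Lambda_5$ version being strictly smaller than the $\Lambda_3$ version in each case. Multiplying by $\p^{m-1}$ yields the inclusions $\rad_{6m-3}(\Lambda_i) \subset \rad_{4m-2}(\Lambda_3)$ for $i = 4, 5$, and hence $P_{6m-3}(\Lambda_i) \subset P_{4m-2}(\Lambda_3)$. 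Since on any such subgroup the character $\psi_\beta$ is defined by the single formula $x \mapsto \psi_0(\mathrm{tr}_{A/F_0}(\beta(x-1)))$, the character of the new stratum is simply the restriction of the character of the original stratum, so containment in $\pi$ passes to the restriction. The main obstacle is the routine entrywise bookkeeping in the filtration comparisons and in the computation of $\beta^2$, both of which are straightforward but require care at each matrix position.
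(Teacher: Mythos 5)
Your proof is correct and follows essentially the same route as the paper's: keep $\beta$ from (\ref{eq:beta_h}) but pass from $\Lambda_3$ to $\Lambda_4$ (when $a\in\mi_0$) or $\Lambda_5$ (when $d\in\mi_0$), verify via the filtration tables that $\beta\in\rad_{3-6m}(\Lambda_i)_-$ and $\rad_{6m-3}(\Lambda_i)\subset\rad_{4m-2}(\Lambda_3)$, and observe that the uniform character formula makes the new stratum's character the restriction of the old one. The only cosmetic difference is that you establish fundamentality of the new stratum by computing $y_\beta$ and $\phi_\beta(X)$ directly, whereas the paper deduces it immediately from the fact that the new stratum has the minimal level $m-1/2 = $ level of $\pi$; both routes are valid and the rest of the argument coincides.
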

\begin{proof}
If $a \in \mi_0$,
then 
we have
$\beta +\rad_{3-4m}(\Lambda_3) \subset
\rad_{3-6m}(\Lambda_4)$.
Similarly,
if $d \in \mi_0$,
then $\beta +\rad_{3-4m}(\Lambda_3) \subset
\rad_{3-6m}(\Lambda_5)$.
Since the level of $\pi$ is $m-1/2$,
the lemma follows immediately.
\end{proof}

Let $[\Lambda_1, 2m-1, 2m-2, \beta]$ be a fundamental skew stratum.
We may assume that $\beta \in \rad_{1-2m}(\Lambda_1)$ 
has the following form:
\begin{eqnarray}
\beta = \p^{-m}
\left(
\begin{array}{cc|cc}
0 & \p C & 0 & 0\\
M & 0 & Y & -\overline{C} \\ \hline
0 & -\p \overline{Y} & 0 & 0 \\
0 & -\p\overline{M} & 0 & 0
\end{array}
\right),\ 
C, M, Y \in \ri_F.
\end{eqnarray}
Then we have
$\phi_\beta(X) = X^2 (X +Y\overline{Y} -CM -\overline{CM})^2$.
\begin{prop}\label{prop:reduction2}
Suppose that an irreducible smooth representation $\pi$ of $G$
contains a skew stratum $[\Lambda, n, n-1, \beta]$
of type (\ref{sec:h_int}-ii) (respectively (\ref{sec:h_int}-iii)).
Then $\pi$ contains 
a fundamental skew strata of type (\ref{sec:h_int}-ii') 
(respectively (\ref{sec:h_int}-iii')).
\end{prop}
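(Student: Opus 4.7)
The proof mirrors Proposition~\ref{prop:reduction}, so I outline only the case (\ref{sec:h_int}-ii)$\to$(\ref{sec:h_int}-ii'); the case (\ref{sec:h_int}-iii)$\to$(\ref{sec:h_int}-iii') runs parallel with $\Lambda_2, \Lambda_5$ in place of $\Lambda_1, \Lambda_4$. The plan is to exhibit a skew stratum $[\Lambda_4, 6m-3, 6m-4, \beta]$ contained in $\pi$. Its level is $(6m-3)/6 = m - 1/2$, matching the level of $\pi$, so the philosophy of minimal $K$-types will automatically make it fundamental, hence of type (\ref{sec:h_int}-ii').

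First I would verify two filtration inclusions: $\rad_{6m-3}(\Lambda_4) \subset \rad_{2m-1}(\Lambda_1)$ and $\rad_{2-2m}(\Lambda_1) \subset \rad_{3-6m}(\Lambda_4)$. Using the explicit matrix descriptions in Section~\ref{strata} together with the periodicity $\rad_{k+e(\Lambda)}(\Lambda) = \p\rad_k(\Lambda)$, these reduce to entry-by-entry comparisons of $\p^{m-1}\rad_1(\Lambda_1)$ with $\p^{m-1}\rad_3(\Lambda_4)$ and of $\p^{1-m}\rad_0(\Lambda_1)$ with $\p^{-m}\rad_3(\Lambda_4)$, respectively. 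The former inclusion gives $P_{6m-3}(\Lambda_4) \subset P_{2m-1}(\Lambda_1)$, so that the restriction of $\psi_\beta$ from the latter to the former makes sense; the latter is the coset-level statement that, once $\beta \in \rad_{3-6m}(\Lambda_4)_-$ is achieved, the full equivalence class $\beta + \rad_{2-2m}(\Lambda_1)_-$ lands in $\rad_{3-6m}(\Lambda_4)_-$.

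The main work is to arrange $\beta \in \rad_{3-6m}(\Lambda_4)_-$. Comparing the nonzero entries of the canonical form of $\beta$ in the (\ref{sec:h_int}-ii) display against those of $\p^{-m}\rad_3(\Lambda_4)$ read off from~(\ref{eq:filtration_4}), the entries at $(1,2), (2,3), (2,4), (3,2)$ and $(4,2)$ automatically satisfy the desired containment for $C, Y \in \ri_F$. The sole obstruction comes from position $(2,1)$, where $\p^{-m}M$ lies in $\mi_F^{1-m}$ precisely when $M \in \mi_F$. This is the main obstacle, and I would handle it by a further normalization: when $M \notin \mi_F$, replace $\beta$ by a suitable $G$-conjugate using an element implementing a twisted interchange of the basis vectors $e_1 \leftrightarrow e_4$ (in the spirit of the Hermitian matrix $H$), which swaps the roles of $M$ and $\overline{C}$ in the $(2,1)$ entry. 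Combining this with diagonal $P_{0}(\Lambda_1)$-conjugations and using the fundamentality condition $CM + \overline{CM} - Y\overline{Y} \in \ri_0^\times$, one reduces to $M \in \mi_F$.

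Once $\beta + \rad_{2-2m}(\Lambda_1)_- \subset \rad_{3-6m}(\Lambda_4)_-$ is in hand, the restriction of $\psi_\beta$ from $P_{2m-1}(\Lambda_1)$ to $P_{6m-3}(\Lambda_4)$ is precisely the stratum character associated with $[\Lambda_4, 6m-3, 6m-4, \beta]$, so $\pi$ contains this stratum. For the fundamentality statement, the characteristic polynomial is computed via $y_\beta = \p^{n/k}\beta^{e/k}$ with $k = (e,n)$: on the $\Lambda_1$ side, $(2, 2m-1) = 1$ gives $y_\beta = \p^{2m-1}\beta^2$; on the $\Lambda_4$ side, $(6, 6m-3) = 3$ gives $y_\beta = \p^{(6m-3)/3}\beta^{6/3} = \p^{2m-1}\beta^2$. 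The two polynomials coincide, so $\phi_\beta(X) \neq X^4$ transfers from the input to the output.
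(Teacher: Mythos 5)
Your outline is structurally the same as the paper's: conjugate $\beta$ by $P_0(\Lambda_1)$ so that it lies in $\rad_{3-6m}(\Lambda_4)_-$, restrict the character $\psi_\beta$ from $P_{2m-1}(\Lambda_1)$ to the smaller group $P_{6m-3}(\Lambda_4)$, and note that $y_\beta = \p^{2m-1}\beta^2$ is the same on both sides so the characteristic polynomial — hence fundamentality — transfers. The two filtration inclusions you propose to check are both true ($\rad_3(\Lambda_4) \subset \rad_1(\Lambda_1)$ and $\p\rad_0(\Lambda_1) \subset \rad_3(\Lambda_4)$), and the last paragraph on characteristic polynomials is correct.

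The gap is in the normalization step, which is the only substantive content of the argument. You propose to arrange $M \equiv 0 \pmod{\mi_F}$ by a twisted interchange $e_1 \leftrightarrow e_4$ together with diagonal $P_0(\Lambda_1)$-conjugations, invoking the fundamentality condition $CM + \overline{CM} - Y\overline{Y} \in \ri_0^\times$. But the swap merely exchanges the pair $(M, C)$ with $(-\overline{C}, -\overline{M})$, and a diagonal element of $P_0(\Lambda_1)$ scales $C$ and $M$ by units; neither move changes whether $M$ and $C$ are units modulo $\mi_F$. The fundamentality condition does not force either of them to be a non-unit: take for instance $C \equiv 1$, $M \equiv \e$ (so $CM + \overline{CM} = \e + \overline{\e} = 0$) and $Y \in \ri_F^\times$. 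Then the stratum is fundamental, both $C$ and $M$ are units, and your proposed moves can never produce $M \in \mi_F$.

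The fact the paper's \lq\lq easy to observe'' is quietly leaning on is that $P_0(\Lambda_1)/P_1(\Lambda_1)$ contains the full unitary group of the three-dimensional residue space $\Lambda_1(0)/\Lambda_1(1)$ (spanned by the images of $e_1, e_3, e_4$, with Gram matrix having the hyperbolic pair $(e_1,e_4)$ plus the line $e_3$), and this group is much larger than the diagonal-and-permutation part you are using. The coset of $\beta$ is encoded by the vector $v = (C, -\overline{Y}, -\overline{M})$ in that residue space, with norm $h(v,v) \equiv Y\overline{Y} - CM - \overline{CM} \pmod{\mi_F}$. Every value in $k_0$ is realized as the norm of a vector with vanishing $e_4$-component (and, when the value is $0$, by a nonzero such vector), so Witt's theorem for this residue unitary group yields a conjugating element — in general a unipotent one, not a diagonal or a swap — taking $v$ to a vector with zero $e_4$-coordinate, i.e., $M \equiv 0 \pmod{\mi_F}$. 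This is a genuine step that your proposal does not supply.
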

\begin{proof}
We may replace $[\Lambda_1, 2m-1, 2m-2, \beta]$
with $[\Lambda_1, 2m-1, 2m-2, \Ad(g)\beta]$,
for $g \in P_0(\Lambda_1)$.
It is easy to observe that 
we may assume $M \equiv 0 \pmod{\mi_F}$
after $P_0(\Lambda)$-conjugation.
Then we get
$\beta +\rad_{2-2m}(\Lambda_1) \subset \rad_{3-6m}(\Lambda_4)$.
This implies that
$\pi$ contains some fundamental skew stratum
$[\Lambda_4, 6m-3, 6m-4, \gamma]$.

We can treat the case (\ref{sec:h_int}-iii)
in a similar fashion.
\end{proof}

Due to Propositions~\ref{prop:reduction} and \ref{prop:reduction2},
an irreducible smooth representation of $G$ of level $m-1/2$
contains a fundamental skew stratum of type
(\ref{sec:h_int}-ia),
(\ref{sec:h_int}-ib),
(\ref{sec:h_int}-ii'),
or (\ref{sec:h_int}-iii').

\subsection{Case (\ref{sec:h_int}-ia)}\label{u11_E}
Let $[\Lambda, n, n-1, \beta]$ be a 
fundamental skew stratum of type (\ref{sec:h_int}-ia).
Replacing $\beta$ with an element in 
$\beta + \rad_{1-n}(\Lambda)_-$,
we can assume that 
$ad \varepsilon = -Y \overline{Y}$.

Since $\beta^2 = \p^{1-2m}ad \varepsilon$,
the algebra $E = F[\beta]$ is a totally ramified extension of
degree 2 over $F$, and 
$[\Lambda, n, n-1, \beta]$ is a skew simple 
stratum in $A$.
Note that $E$ is tamely ramified over $F$
since we are assuming that $F$ is 
of odd residual characteristic.

\begin{prop}\label{prop:u11}
The group $G_E$ is isomorphic to the anisotropic unitary group
in two variables over $E_0$.
\end{prop}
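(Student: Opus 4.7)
The plan is to identify $E$ and $E_0$ as explicit ramified and unramified quadratic extensions, to recognise $G_E$ as the unitary group of the induced Hermitian form $h_E$ on $V$ viewed as a $2$-dimensional $E$-space, and then to verify anisotropy by a discriminant computation.

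Since $\beta^2 = \p^{1-2m} ad\varepsilon \in F^\times$ has odd $\p$-adic valuation (as $ad\varepsilon \in \ri_0^\times$ in case (\ref{sec:h_int}-ia)), the algebra $E = F[\beta]$ is a ramified quadratic extension of $F$. The skewness $\sigma(\beta) = -\beta$ together with $\sigma|_F = \bar{\cdot}$ forces $\sigma|_E$ to be a non-trivial involution; the element $\e\beta$ is $\sigma$-fixed, so $E_0 = F_0[\e\beta]$. Since $(\e\beta)^2 = \varepsilon^2 ad \p^{1-2m} \in F_0^\times$ also has odd $\p$-adic valuation, $E_0/F_0$ is a ramified quadratic extension and $\nu_{E_0}(\e\beta)$ is odd. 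By the Remark in Section~\ref{sec:preliminaries}, $E/E_0$ is unramified quadratic, and hence $G_E$ is the unitary group $\rU(V, h_E)$ associated to the induced $E/E_0$-Hermitian form on $V$ viewed as a free $E$-module of rank $2$.

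To show that $(V, h_E)$ is anisotropic, I will exploit the orthogonal decomposition $V = V_1 \perp V_2$ with $V_1 = F e_1 \oplus F e_4$ and $V_2 = F e_2 \oplus F e_3$, which is immediate from the explicit form of $\beta$: each $V_i$ is $\beta$-stable, hence free of rank one over $E$, and the matrix of $h$ is block-diagonal in this splitting. Consequently $h_E = \langle c_1\rangle \perp \langle c_2 \rangle$ where $c_i = h_E(v_i, v_i) \in E_0^\times$ for suitable $E$-generators $v_i$ of $V_i$. Using the defining relation $h(v,w) = \tr_{E/F}(h_E(v, w))$, a direct calculation with $v_1 = e_1$ and $v_2 = e_3$ yields $c_1 \in F_0^\times \cdot \e\beta$ and $c_2 \in F_0^\times$, so that $\nu_{E_0}(c_1 c_2)$ is odd.

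Finally, since $E/E_0$ is unramified quadratic, the subgroup $N_{E/E_0}(E^\times)$ is exactly the set of elements of $E_0^\times$ of even $\nu_{E_0}$-valuation, of index $2$. The discriminant $c_1 c_2$ then lies in the non-trivial coset of $E_0^\times/N_{E/E_0}(E^\times)$, so $h_E$ is not hyperbolic; as there are only two isometry classes of rank-$2$ $E/E_0$-Hermitian forms, $h_E$ must be anisotropic. Thus $G_E = \rU(V, h_E)$ is the anisotropic unitary group in two variables over $E_0$. The main obstacle will be the computation of $c_1$ and $c_2$: one has to track carefully the distinction between the Galois conjugation of $E/F$ and the involution $\sigma|_E$, and verify that the factor $\e\beta$ (the only source of odd $\nu_{E_0}$-valuation available in $E_0$) appears in $c_1$ but not in $c_2$.
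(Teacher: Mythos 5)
Your proof is correct, but it takes a genuinely different route from the paper's. The paper proceeds lattice-theoretically: citing \cite{BS}~\S 5, it chooses $h_E$ so that $\ri_E$-duals of lattices with respect to $h_E$ coincide with duals with respect to $h$, observes that $\Lambda_3$ is then a \emph{strict} self-dual $\ri_E$-lattice sequence in $(V, h_E)$ of $\ri_E$-period $2$ and $d(\Lambda_3)=-1$, and invokes \cite{U22}~Lemma~1.11, which recognises this lattice-combinatorial configuration as possible only on the anisotropic $2$-dimensional space. You instead identify $E_0 = F_0[\e\beta]$ as a ramified quadratic extension of $F_0$ (so that $E/E_0$ is unramified by the Remark in \S\ref{sec:preliminaries}), use the orthogonal $\beta$-stable splitting $V_1 = Fe_1 \oplus Fe_4$, $V_2 = Fe_2 \oplus Fe_3$ to diagonalise $h_E$, and compute the discriminant class directly: with the normalisation $h = \tr_{E/F}\circ h_E$ one finds $c_1 = h_E(e_1,e_1) \in F_0^\times\cdot\e\beta$ and $c_2 = h_E(e_3,e_3)\in F_0^\times$, so $\nu_{E_0}(c_1 c_2)$ is odd; since $E/E_0$ is unramified, $N_{E/E_0}(E^\times)$ consists of the elements of even $E_0$-valuation, so the discriminant is a non-norm and the form cannot be hyperbolic. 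Both arguments are sound. Yours is more self-contained and elementary (no external lattice lemma), and it is robust against the choice of normalisation of $h_E$, since multiplying $h_E$ by $\lambda\in E_0^\times$ changes the discriminant by $\lambda^2 = N_{E/E_0}(\lambda)$; one small gap to fill is the actual computation of $c_1$, which you state but do not carry out (for the record: $h(e_1,e_1)=0$ forces the $F_0$-part of $c_1$ to vanish, and $h(e_1,\beta e_1) = -\p^{1-m}d\e$ together with $\beta^2 = \p^{1-2m}ad\varepsilon$ pins down the $\e\beta$-coefficient to a unit in $F_0^\times$ times $\p^m$). The paper's approach, by contrast, packages the anisotropy criterion into a reusable lemma about existence of a strict self-dual lattice sequence of the given period and duality shift, which is the more economical tool when the same question recurs for several strata.
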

\begin{proof}
The group $G_E$ is the unramified unitary in two variables 
defined by a hermitian form $(V, h_E)$ 
induced by the involutive algebra $(B, \sigma)$.
By \cite{BS} Section 5,
we may choose $h_E$ so that 
for any $\ri_E$-lattice $L$ in $V$,
the dual lattice of $L$ with respect to $h_E$
equals to that with respect to $h$.
Then $\Lambda$ is a strict self-dual $\ri_E$-lattice in $(V, h_E)$
of period 2 with $d(\Lambda) = -1$.
The assertion follows from \cite{U22} Lemma 1.11.
\end{proof}

Proposition~\ref{prop:u11} implies that 
the group $G_E$ is compact.
So Theorem~\ref{thm:Hecke} gives a classification
of the irreducible smooth representations of $G$
which contain $[\Lambda, n, n-1, \beta]$.

\subsection{Case (\ref{sec:h_int}-ib)}
\label{subsec:half}
Let $[\Lambda, n, n-1, \beta]$ be a fundamental skew stratum 
of type (\ref{sec:h_int}-ib).
Put $V^1 = F e_1 \oplus F e_4$ and 
$V^2 = Fe_2 \oplus F e_3$.
Then the stratum $[\Lambda, n, n-1, \beta]$ is 
split with respect to 
the $F$-splitting $V = V^1 \bot V^2$.

We use the notation in \S \ref{section:Moy}.
Since $\beta_1^2 = \p^{1-2m} ad \varepsilon \cdot 1_{V^1}$,
the stratum $[\Lambda^1, n, n-1, \beta_1]$ is 
simple.
If $Y \in \ri_F^\times$,
then $[\Lambda^2, n, n-1, \beta_2]$ is also simple.
Otherwise,
we can assume that $Y = 0$.
Consequently,
$[\Lambda, n, n-1, \beta]$ is skew semisimple.

\begin{prop}\label{prop:sch}
The group $G_E$ is 
compact.
\end{prop}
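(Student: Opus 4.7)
The plan is to use the $F$-splitting $V = V^1 \bot V^2$ to write $G_E$ as a direct product of the centralizers on each factor, and then verify compactness of each factor separately. Since $\beta$ is block-diagonal with respect to this splitting and the splitting is orthogonal for $h$, we have $B = B_1 \oplus B_2$ with $B_i \subset \mathrm{End}_F(V^i)$ stable under $\sigma$, and hence $G_E = (G \cap B_1) \times (G \cap B_2)$.

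On $V^1$ the stratum $[\Lambda^1, n, n-1, \beta_1]$ is simple with $\beta_1^2 = \p^{1-2m}ad\varepsilon$ a uniformizer of $F$ times a unit, so $E_1 = F[\beta_1]$ is a totally ramified quadratic extension of $F$ and $V^1$ is one-dimensional over $E_1$. Since $\sigma(\beta_1) = -\beta_1$, the involution $\sigma$ restricts to a non-trivial involution on $E_1$, and the argument recorded in the Remark on skew simple strata (using crucially that $F/F_0$ is unramified) forces $E_1/E_{1,0}$ to be unramified quadratic. The factor of $G_E$ acting on $V^1$ is therefore the norm-one subgroup $\{g \in E_1^\times : N_{E_1/E_{1,0}}(g) = 1\}$, which lies in $\ri_{E_1}^\times$ and hence is compact.

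For the factor on $V^2$ I would split into the two subcases permitted by the normalization. If $Y \in \ri_F^\times$, then $\beta_2^2 = -\p^{1-2m}Y\overline{Y}$ is again a uniformizer of $F$ times a unit, the stratum $[\Lambda^2, n, n-1, \beta_2]$ is simple, and the same analysis as on $V^1$ shows $E_2/E_{2,0}$ is unramified quadratic and the corresponding factor of $G_E$ is a compact $\rU(1)$. If $Y = 0$, then $\beta_2 = 0$, the centralizer $B_2$ equals $\mathrm{End}_F(V^2)$, and the factor of $G_E$ is the full unitary group $\rU(V^2, h|_{V^2})$; with respect to the basis $\{e_2, e_3\}$ the restricted form has diagonal Gram matrix $\mathrm{diag}(\p, 1)$, so its discriminant $\p$ has odd valuation. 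Since $F/F_0$ is unramified, $N_{F/F_0}(F^\times) = \ri_0^\times \p^{2\Z}$, and $\p$ lies outside this norm group; hence $h|_{V^2}$ is anisotropic and $\rU(V^2, h|_{V^2})$ is compact.

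Combining the two factors yields the result. The main technical point is the identification of the involution on each $E_i$ together with the ramification computation giving $E_i/E_{i,0}$ unramified; the $Y = 0$ subcase reduces to the short norm-group check exhibiting anisotropy of $h|_{V^2}$.
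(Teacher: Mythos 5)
Your proposal is correct and follows essentially the same route as the paper: decompose $G_E$ along the orthogonal splitting $V = V^1 \bot V^2$, then treat the two subcases $Y \in \ri_F^\times$ (both factors become $\rU(1)$ over a quadratic extension) and $Y = 0$ (where the $V^2$-factor is a rank-two unitary group) separately. The only addition is that you explicitly verify the anisotropy of $h|_{V^2}$ via the discriminant and the norm group $N_{F/F_0}(F^\times) = \ri_0^\times \p^{2\Z}$, a detail the paper leaves implicit when it states that the second factor is the anisotropic $\rU(2)(F/F_0)$.
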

\begin{proof}
If $Y \in \ri_F^\times$,
then $G_E$ is isomorphic to 
$U(1)(E_1/E_{1,0}) \times U(1)(E_2/E_{2,0})$.
If not,
then $G_E$ is isomorphic to 
$U(1)(E_1/E_{1,0}) \times U(2)(F/F_{0})$,
where $U(2)$ denote the anisotropic unitary group in 
two variables.
\end{proof}

Recall that we are assuming $F$ is of odd residual characteristic.
Then $E_i$ is tamely ramified over $F$, for $i = 1, 2$.
Therefore the irreducible smooth representations of $G$
containing $[\Lambda, n, n-1, \beta]$ are classified 
by Theorem~\ref{thm:Hecke}.

\subsection{Case (\ref{sec:h_int}-ii')}\label{5_1}
Let $[\Lambda, n, n-1, \beta]$ be a fundamental skew stratum
of type (\ref{sec:h_int}-ii').
Up to equivalence class of skew strata,
we may assume that $\beta \in \rad_{-n}(\Lambda)_-$ 
has the following form:
\begin{eqnarray}\label{eq:beta1}
\beta = \p^{-m}
\left(
\begin{array}{cccc}
0 & 0 & 0 & 0\\
0 & 0 & Y & 0\\ 
0 & -\p \overline{Y} & 0 & 0\\
0 & 0 & 0 & 0
\end{array}
\right),\ Y \in \ri_F.
\end{eqnarray}

Because the stratum is fundamental, we have $Y \in \ri_F^\times$.
As in the previous section, when
we define an orthogonal $F$-splitting $V = V^1 \bot V^2$ by
$V^1 = Fe_1 \oplus Fe_4$ and 
$V^2 = Fe_2 \oplus Fe_3$,
the skew stratum $[\Lambda, n, n-1, \beta]$
is split with respect to 
$V = V^1 \bot V^2$.

We use the notation in Section \ref{section:Moy}.
Then $\beta_1 = 0$ and
$E_2 = F[\beta_2]$ is a totally ramified extension 
of degree 2 over $F$,
and hence $E_i$ is tamely ramified over $F$, for $i = 1, 2$.

We shall apply the construction in Section \ref{section:Moy}.
Since $n$ is odd,
we have $J = J_+$ and $\rho = \Psi_\beta$.
We abbreviate $\Psi = \Psi_\beta$.
\begin{thm}\label{thm:H-isom1}
With the notation as above,
there exists a support-preserving, $*$-isomorphism 
$\eta : \He(G_E//J', \Psi) 
\simeq \He(G//J, \Psi)$.
\end{thm}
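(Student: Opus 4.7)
The plan is to give a presentation of $\He(G_E//J', \Psi)$ as an affine Hecke algebra of rank one and lift generators and relations to $\He(G//J, \Psi)$. Since $\beta_1 = 0$ and $E_2 = F[\beta_2]$ is a totally ramified quadratic extension of $F$, we have $G_E \simeq \rU(1,1)(F/F_0) \times \rU(1)(E_2/E_{2,0})$; the first factor is non-compact because $h|_{V^1}$ on $V^1 = Fe_1 \oplus Fe_4$ is hyperbolic, so Theorem~\ref{thm:Hecke} does not apply. Since $n = 6m-3$ is odd we have $J = J_+$ and $\rho = \Psi$ is one dimensional. Propositions~\ref{prop:H_surj2} and~\ref{prop:H_inj} then give a bijection $J'\backslash G_E/J' \to J\backslash JG_EJ/J$, so that the standard basis $\{e_w\}$ of $\He(G_E//J', \Psi)$ and the basis $\{f_w\}$ of $\He(G//J, \Psi)$ (where $f_w$ is supported on $JwJ$ with $f_w(w) = 1$) are indexed by the same set. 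Setting $\eta(e_w) = f_w$ is thus a support-preserving linear bijection, and the $*$-preservation follows from $e_w^* = e_{w^{-1}}$, $f_w^* = f_{w^{-1}}$ together with $\widetilde{\Psi}_\beta = \Psi_{-\beta}$.

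The substantive content is multiplicativity, which I would verify by presenting $\He(G_E//J', \Psi)$ through finitely many generators and relations. The Iwahori-Hecke structure of $\rU(1,1)$ supplies two affine reflections $s_0, s_1$ together with a finite abelian part coming from the compact $\rU(1)$-factor and the maximal compact torus of $\rU(1,1)$, subject to braid relations $e_{w_1} * e_{w_2} = e_{w_1 w_2}$ (whenever $J'w_1 J' \cdot J'w_2 J' = J'w_1 w_2 J'$) and quadratic relations $(e_{s_i} - q_i)(e_{s_i} + 1) = 0$. For each braid relation I would unfold $f_{w_1} * f_{w_2}$ as an integral over $J$ and apply Lemma~\ref{lem:intertwine_J} together with Proposition~\ref{prop:decomp_E} to decompose the integrand along $B$ and $B^\bot$: the $B$-part reproduces the corresponding relation in $\He(G_E//J', \Psi)$, while the $B^\bot$-part collapses because $\Psi_\beta$ restricted to the relevant $\rad^\bot$-coset is a non-trivial character summed over a full lattice coset, owing to Corollary~\ref{cor:adjoint}.

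The hard part will be the quadratic relations: evaluating $(f_{s_i} * f_{s_i})(1)$ requires enumerating $J$-cosets in $Js_i J \cap J s_i^{-1} J$ and computing a character sum $\sum_x \Psi_\beta(x s_i x s_i)$ over a set of representatives. Here Corollary~\ref{cor:5c1_J} allows me to replace each $x$ by a $B$-representative modulo a deep filtration, reducing the sum to an analogous sum in $G_E$, while the leftover $B^\bot$-cosets contribute a volume factor that matches the normalization of the Haar measure on the $\He(G_E//J', \Psi)$ side; the parameters $q_i$ then coincide. Once both families of relations are verified, $\eta$ is an algebra homomorphism, and combined with support preservation and $*$-compatibility it is the desired $*$-isomorphism.
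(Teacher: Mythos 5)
Your overall strategy --- present $\He(G_E//J',\Psi)$ by generators and relations, lift the generators to $\He(G//J,\Psi)$, and verify the relations there by unfolding convolutions over coset representatives, using Lemma~\ref{lem:intertwine_J} and character sums to kill the $B^\bot$ contributions --- is exactly the strategy of the paper (Theorems~\ref{thm:H_G'} and \ref{thm:H_G}), as is your use of Propositions~\ref{prop:E_surj} and \ref{prop:H_inj} to match double cosets. But two of your concrete claims fail. First, $\He(G_E//J',\Psi)$ is not a rank-one affine Hecke algebra with quadratic relations $(e_{s_i}-q_i)(e_{s_i}+1)=0$: here $J'=P_n(\Lambda)\cap B$ is a deep congruence subgroup carrying the nontrivial character $\Psi$, so the presentation is of Howe's type, with generators $e_g$ for all $g\in B'\cup S$ and with, e.g., $e_{s_2}*e_{s_2}=[J's_2J':J']\sum_{x\in\ri_0/\mi_0^2}e_{u(\p^{m-2}x\e)}$, a sum over a unipotent lattice coset rather than a combination of $e_1$ and $e_{s_2}$ (and the relation takes a different, more involved form when $m=1$). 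Checking Iwahori-type braid and quadratic relations would be checking relations that do not hold in this algebra.

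Second, and more seriously, the map $\eta(e_w)=f_w$ you propose is not an algebra homomorphism, because the structure constants on the two sides genuinely differ: by Lemma~\ref{lem:5c1_J'w}, $[J's_2J':J']=q^2$ while $[Js_2J:J]=q^6$, so the relation for $e_{s_2}*e_{s_2}$ carries the factor $q^2$ while that for $f_{s_2}*f_{s_2}$ carries $q^6$ (and in relation (v) the factors are $q^2$ versus $q^4$). Your assertion that the leftover $B^\bot$-cosets contribute a volume factor making ``the parameters coincide'' is precisely the point that is false: with $\mathrm{vol}(J')=\mathrm{vol}(J)=1$ fixed by the definition of the two Hecke algebras, there is no normalization freedom left, and the discrepancy has to be absorbed into the map itself. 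The paper's isomorphism is $\eta(e_g)=\bigl(\mathrm{vol}(J'gJ')/\mathrm{vol}(JgJ)\bigr)^{1/2}f_g$, i.e.\ $\eta(e_{s_1})=f_{s_1}$, $\eta(e_{s_2})=q^{-2}f_{s_2}$, $\eta(e_b)=f_b$ for $b\in B'$; this rescaled map is still support-preserving and $*$-preserving (the scaling factor is positive and invariant under $g\mapsto g^{-1}$), so the theorem itself is unaffected, but your unnormalized map would already fail on the relation (iv). With the correct presentation and this rescaling, the rest of your outline (in particular the use of Corollary~\ref{cor:5c1_J} and the collapse of the $B^\bot$-part via nontrivial character sums) is the computation actually carried out in the proof of Theorem~\ref{thm:H_G}.
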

\begin{rem}
The isomorphism in Theorem~\ref{thm:H-isom1}
induces a bijection from
$\Irr(G_E)^{(J', \Psi)}$ to $\Irr(G)^{(J, \Psi)}$.
The character $\Psi$ extends to a character of $G_E$,
so we get a bijection
from $\Irr(G_E)^{(J', 1)}$ to $\Irr(G)^{(J, \Psi)}$.
Since the centers of $G$ and $G_E$ are both compact,
this bijection maps a supercuspidal representation of $G_E$ to 
that of $G$.
\end{rem}

We put $B' = P_{0}(\Lambda) \cap G_E$.
Then $B'$ is the Iwahori subgroup of $G_E$ and
normalizes $J' = P_{n}(\Lambda) \cap G_E$.
We define elements $s_1$ and $s_2$ in $G_E$ by
\[
s_1 = 
\left(
\begin{array}{cccc}
0 & 0 & 0 & 1\\
0 & 1 & 0 & 0\\
0 & 0 & 1 & 0\\
1 & 0 & 0 & 0
\end{array}
\right),\
s_2 = 
\left(
\begin{array}{cccc}
0 & 0 & 0 & \p^{-1}\\ 
0 & 1 & 0 & 0\\
0 & 0 & 1 & 0\\ 
\p & 0 & 0 & 0
\end{array}
\right).
\]
Put $S = \{s_1, s_2\}$ and $W' = \langle S \rangle$.
Then we have a Bruhat decomposition $G_E = B' W' B'$.
\begin{lem}\label{lem:5c1_J'w}
Let $t$ be a non negative integer. 
Then

(i)
$[J' (s_1s_2)^t J': J'] = 
[J' (s_1s_2)^ts_1 J': J'] = 
[J'(s_2 s_1)^tJ' : J'] 
= q^{2t}$,
$[J' (s_2 s_1)^t s_2 J' : J'] = q^{2(t+1)}$.

(ii)
$[J (s_1s_2)^t J: J] = [J(s_2 s_1)^tJ : J]  =
[J (s_1s_2)^ts_1 J: J] = q^{6t}$,
$[J (s_2 s_1)^t s_2 J: J] = q^{6(t+1)}$.
\end{lem}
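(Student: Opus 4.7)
The plan is to compute each index as $[J':J' \cap w^{-1} J' w]$ (respectively $[J : J \cap w^{-1} J w]$) directly, by tracking the action of $w$-conjugation on individual matrix entries. The key observation is that $s_2 s_1 = \mathrm{diag}(\p^{-1}, 1, 1, \p)$ is diagonal, so conjugation by $(s_2 s_1)^t$ multiplies the $(i,j)$-entry of any $X \in A$ by $\p^{t(\epsilon_j - \epsilon_i)}$ with $\epsilon_1 = 1$, $\epsilon_4 = -1$, $\epsilon_2 = \epsilon_3 = 0$; the reflection $s_1$ acts by the coordinate transposition $(1\,4)$; and $s_2$ combines this transposition with scalars $\p^{\pm 1}$. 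In particular all of $W'$ acts trivially on the $V^2$-block, so the $V^2$-contribution to every index is $1$.

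For part (i), I first identify $J'$ with $(\rad_n(\Lambda_4) \cap B)_-$ via the Cayley-type isomorphism of Proposition~\ref{thm:Morris}. Since $\beta_1 = 0$, we have $B \cap \mathrm{End}_F(V^1) = \mathrm{End}_F(V^1)$, and the skew-hermitian condition gives $X_{11} = -\overline{X_{44}} \in \mi_F^m$ together with $X_{14}, X_{41} \in \e\mi_0^m$ on the $V^1$-block. Imposing also $wXw^{-1} \in \rad_n(\Lambda_4)$ for $w=(s_2s_1)^t$ forces the single strict constraint $X_{14} \in \mi_F^{m+2t}$, which gives the index $[\mi_0^m : \mi_0^{m+2t}] = q^{2t}$. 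The words $(s_1 s_2)^t$ and $(s_1 s_2)^t s_1$ yield the same count by symmetry and the fact that $s_1$ normalizes $J'$ (its permutation of entries preserves the uniform filtration $\mi_F^m$ on the $V^1$-block), and $(s_2 s_1)^t s_2$ gains an additional factor of $q^2$ from the extra $s_2$.

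For part (ii) the $V^1$-contribution is as in part (i). The new ingredient is the $B^\bot$-block, whose skew-hermitian elements are parametrised via $X_{12} = -\p\overline{X_{24}}$, $X_{13} = -\overline{X_{34}}$, $X_{21} = -\p^{-1}\overline{X_{42}}$, $X_{31} = -\overline{X_{43}}$, leaving $X_{24}, X_{34}, X_{42}, X_{43}$ as free parameters. Reading the filtration $\rad_{(n+1)/2}(\Lambda_4) \cap B^\bot$ off the matrices in Section~\ref{sec:preliminaries} (with $(n+1)/2 \equiv 2$ or $5 \pmod 6$ according to the parity of $m$), one sees that $(s_2 s_1)^t$-conjugation tightens strictly exactly two of the four free-entry filtrations, each by $\mi_F^t$ and hence by $|k_F|^t = q^{2t}$; the other two entries remain unconstrained. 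Multiplying the $B^\bot$-contribution $q^{4t}$ by the $V^1$-contribution $q^{2t}$ yields $q^{6t}$. The analogous analysis for $(s_1 s_2)^t s_1$ gives $q^{6t}$, while $(s_2 s_1)^t s_2$ picks up an additional $q^4$ from $B^\bot$ and $q^2$ from $V^1$ for a total of $q^{6(t+1)}$.

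The hard part will be the parity-dependent case analysis in (ii): the filtration on $B^\bot$-entries is non-uniform, and one must verify that both $m$ odd and $m$ even produce the same final index. The reason this should work uniformly is that the extra powers of $\p$ appearing in the skew-hermitian pairings between free entries $X_{24}, X_{34}, X_{42}, X_{43}$ and their dual partners $X_{12}, X_{13}, X_{21}, X_{31}$ exactly compensate for the one-step offsets in the filtration levels at the paired positions; consequently the dependent-entry constraints never tighten the free-entry ones beyond what the $w$-conjugation itself imposes, and the case analysis collapses to the single uniform answer stated in the lemma.
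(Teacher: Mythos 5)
Your proposal is correct and follows essentially the same route as the paper, which likewise reduces each index to a lattice index $[\J_-:\J_-\cap g\J_-g^{-1}]$ (respectively for $J'$) and evaluates it from the explicit matrix description of the filtration $\{\rad_k(\Lambda_4)\}$; your entry-by-entry bookkeeping, including the observation that the skew pairings $X_{12}=-\p\overline{X_{24}}$, $X_{42}=-\p\overline{X_{21}}$, etc.\ impose conditions coinciding with those at the free positions in both parity cases, checks out and yields the stated powers $q^{2t}$, $q^{2(t+1)}$, $q^{6t}$, $q^{6(t+1)}$. The paper's proof is just a one-line version of this same computation, so no further changes are needed.
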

\begin{proof}
Let $g \in G$.
Since $[JgJ:J] = [J: J\cap gJg^{-1}]
= [\J_-: \J_-\cap g\J_- g^{-1}]$,
we can compute $[JgJ:J]$ by the description of $\{\rad_k(\Lambda)\}_{k \in \Z}$.
\end{proof}

For $\mu \in F$ and $\nu \in F^\times$,
 we set
\[
u(\mu) = 
\left(
\begin{array}{cccc}
1 & 0 & 0 & \mu\\ 
0 & 1 & 0 & 0\\
0 & 0 & 1 & 0\\ 
0 & 0 & 0 & 1
\end{array}
\right),\
\underline{u}(\mu) = 
\left(
\begin{array}{cccc}
1 & 0 & 0 & 0\\ 
0 & 1 & 0 & 0\\
0 & 0 & 1 & 0\\ 
\mu & 0 & 0 & 1
\end{array}
\right),\
h(\nu) = 
\left(
\begin{array}{cccc}
\nu & 0 & 0 & 0\\ 
0 & 1 & 0 & 0\\
0 & 0 & 1 & 0\\ 
0 & 0 & 0 & \overline{\nu}^{-1} 
\end{array}
\right).
\]
For $g \in G_E$,
let $e_g$ denote the element in 
$\He(G_E//J', \Psi)$
such that
 $e_g(g) = 1$ and $\mathrm{supp}(e_g) = J'gJ'$.
\begin{thm}\label{thm:H_G'}
Suppose $m\geq 2$.
Then the algebra $\He(G'//J', \Psi)$ is generated by the elements $e_g$, $g \in B' \cup S$.
These elements are subject to the following relations:

(i) $e_k = \Psi(k) e_1$, $k \in J'$,

(ii) $e_k * e_{k'} = e_{kk'}$, $k, k' \in B'$,

(iii) $e_k * e_{s} = e_s * e_{sks}$, $s \in S$, $k \in B' \cap sB's$,

(iv) $e_{s_1}*e_{s_1} = e_1$,

$e_{s_2} * e_{s_2} = [J's_2 J' :J']\sum_{x \in \ri_0/\mi_0^2} 
e_{u(\p^{m-2}x\e)}$,

(v) For $\mu \in \ri_0^\times \e$,

$e_{s_1}* e_{u(\mu)}* e_{s_1}
= e_{u(\mu^{-1})} * e_{s_1} * e_{h(\mu)} * e_{u(\mu^{-1})}$,

$e_{s_2}* e_{\underline{u}(\p \mu)}* e_{s_2}
= [J's_2J':J']
e_{\underline{u}(\p \mu^{-1})} * e_{s_2} * e_{h(-\mu^{-1})} 
* e_{\underline{u}(\p \mu^{-1})}$.
\end{thm}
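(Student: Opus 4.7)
The plan is to present $\He(G_E//J', \Psi)$ by first identifying its support, then a generating set, and finally verifying each relation via explicit Bruhat-type matrix calculations in $G_E \simeq \rU(1,1)(F/F_0) \times \rU(1)(E_2/E_{2,0})$. Since $G_E$ centralizes $\beta$, it normalizes both $(J_+, \Psi)$ and its restriction $(J', \Psi)$, so every $g \in G_E$ intertwines $\Psi$ in $G_E$; hence the support of $\He(G_E//J', \Psi)$ is all of $G_E$. The Bruhat decomposition $G_E = B' W' B'$, with $W'=\langle s_1, s_2\rangle$ infinite dihedral, then gives a basis $\{e_g\}$ indexed by $J'$-double cosets in $\bigsqcup_{w\in W'} B'wB'$. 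The element $\beta_2$ only affects the $V^2$-block and the second factor of $G_E$ is compact and lies in $B'$, so all non-trivial affine structure comes from the $\rU(1,1)$-factor. A standard length-induction on $W'$, using that each $B'wB'$ decomposes into finitely many $J'$-cosets, shows the algebra is generated by $\{e_g : g \in B' \cup S\}$.

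The relations (i)--(iii) are essentially structural: (i) records that $\Psi$ is a character of $J'$ and that $B'$ fixes $\Psi$; (ii) follows because $B'$ normalizes $J'$ and, centralizing $\beta$, fixes $\Psi$, so $e_k * e_{k'}$ is supported on $J'kk'J' = kk'J'$; (iii) is the braid identity $ks = s(sks)$ with $sks \in B'$, and one checks on supports and values that both sides equal $e_{ks}$.

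The heart of the proof is (iv) and (v). For (iv) with $s_1$, one uses $s_1^2 = 1$ and Lemma~\ref{lem:5c1_J'w}(i) which gives $[J' s_1 J' : J'] = 1$, so $J's_1J' = s_1 J'$ and $(J's_1J')\cdot (J's_1J') = J'$; combined with $\Psi(s_1 k s_1^{-1}) = \Psi(k)$ for $k \in J'$ (since $s_1 \in G_E$) this yields $e_{s_1}*e_{s_1} = e_1$. For $s_2$ one has $s_2^2 = 1$ as well, but now $[J's_2J':J'] = q^2$, so $(J's_2J')\cdot (J's_2J')$ spreads over several $J'$-double cosets. A coset decomposition of $J's_2J'$ and an explicit product computation—using that the $J'$-conjugates of $s_2$ shift by unipotent matrices $u(\p^{m-2}x\e)$ coming from the entry at position $(1,4)$ in $s_2J's_2$—produce exactly the stated sum indexed by $\ri_0/\mi_0^2$. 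The hypothesis $m \geq 2$ enters precisely here to guarantee $\p^{m-2}\ri_0\e$ makes sense inside $J$ and that the coset representatives are distinct; for $m=1$ some collapse and the combinatorics degenerates.

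For (v), the key matrix identities are $s_1 u(\mu) s_1 = \underline{u}(\mu)$ (respectively a shifted analogue for $s_2$), together with the $\mathrm{SL}_2$-type Bruhat decomposition $\underline{u}(\mu) = u(\mu^{-1}) s_1 h(\mu) u(\mu^{-1})$ for $\mu \in \ri_0^\times\e$. Combined with relation (ii) and the covariance (i) against $\Psi$, these yield the first Bernstein--Lusztig type relation; the second is obtained similarly, but one must track the factor $[J's_2J':J']$ produced when passing an $s_2$ across a lower-unipotent element, and verify that the characters of $\Psi$ evaluated on the residual terms $\psi_0(\mathrm{tr}(\beta(\cdot)))$ cancel—this uses that $\beta$ is block-diagonal so the form $\Psi$ vanishes on the off-block unipotents appearing. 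The main technical obstacle is bookkeeping in the $s_2$ computations: showing that all ``error terms'' produced by conjugating unipotents across $s_2$ lie in the kernel of $\Psi$ (equivalently, in $\J_+^* \cap A_-$ modulo the appropriate radical), so that the identity reduces to the clean form stated. Once this is confirmed, the presentation is complete, since the relations suffice to rewrite any monomial in the generators into a normal form indexed by a single $w \in W'$ times an element of $B'/J'$, matching the basis of $\He(G_E//J',\Psi)$.
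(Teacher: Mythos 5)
Your overall strategy matches what the paper intends: the paper's own proof is the single line \emph{``The proof is very similar to that of \cite{Harish} Chapter 3 Theorem 2.1''}, i.e.\ an outsourcing to Howe's Hecke-algebra computation for rank-one $p$-adic groups, and what you write is essentially a sketch of how that computation goes here: use that $G_E$ centralizes $\beta$ so the support is all of $G_E$, use the Bruhat decomposition $G_E=B'W'B'$ with the indices from Lemma~\ref{lem:5c1_J'w} to reduce generation to $B'\cup S$, and then verify (iv)--(v) by matrix identities such as $s_1u(\mu)s_1=\underline{u}(\mu)=u(\mu^{-1})s_1h(\mu)u(\mu^{-1})$ and the computation $s_2x(a,0,0)s_2=u(\varpi^{m-2}a\sqrt{\varepsilon})$, which is where $m\geq 2$ enters. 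Those identities do check out, so the skeleton is sound.

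There is one substantive imprecision worth correcting. In your discussion of relation (v) you say the proof must \emph{``verify that the characters of $\Psi$ evaluated on the residual terms $\psi_0(\mathrm{tr}(\beta(\cdot)))$ cancel---this uses that $\beta$ is block-diagonal so the form $\Psi$ vanishes on the off-block unipotents appearing''}, and you locate the ``error terms'' in $\mathfrak{J}_+^*\cap A_-$ modulo a radical. That bookkeeping belongs to the companion computation in Theorem~\ref{thm:H_G}, where the convolution lives in $G$ and the coset representatives $x(a,A,B)$ have genuine $B^\bot$-components parametrized by $A,B$ which contribute oscillatory sums in $\psi_0(\mathrm{tr}_{F/F_0}(2A\overline{BY}\nu^{-1}))$. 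In the present Theorem~\ref{thm:H_G'}, by contrast, the convolution lives entirely inside $G_E\subset B$: every element commutes with $\beta$, there is no $B^\bot$-component, the coset representatives reduce to $x(a,0,0)$, and the commutator terms you are worried about do not arise. The $G_E$-side computation is strictly simpler and requires no cancellation argument on off-block unipotents; what remains is only the index bookkeeping $[J's_2J':J']=q^2$ from Lemma~\ref{lem:5c1_J'w}(i) and the verification that the two sides of (v) represent the same group element, which you in effect establish via the displayed matrix identities. Conflating the two computations is not fatal, but it imports into Theorem~\ref{thm:H_G'} a difficulty that is not there, and conversely obscures the reason Theorem~\ref{thm:H_G} needs the extra argument.

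One further caveat: the theorem asserts the generators \emph{are subject to} the listed relations, and the way the paper uses it (to show the map $\eta$ in \S\ref{5_1} is a well-defined homomorphism) only requires that these relations hold and suffice to rewrite an arbitrary monomial into a normal form supported on a single $J'$-double coset. Your closing remark that ``the presentation is complete'' slightly overshoots: what is needed (and what the relations deliver, by the usual Iwahori--Hecke length reduction using the index multiplicativity of Lemma~\ref{lem:5c1_J'w}) is a spanning normal form, not a full presentation claim. With these adjustments the argument is the intended one.
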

\begin{proof}
The proof is very similar to that of
\cite{Harish} Chapter 3 Theorem 2.1.
\end{proof}

For $g \in G_E$,
we denote by
 $f_g$ the element in $\He(G//J, \Psi)$ such that
$f_g(g) = 1$ and $\mathrm{supp}(f_g) = JgJ$.
\begin{thm}\label{thm:H_G}
Suppose $m \geq 2$.
Then the algebra
$\He(G//J, \Psi)$ is generated by $f_g$, $g \in B' \cup S$
and satisfies the following relations:

(i) $f_k = \Psi(k)f_1$, $k \in J'$,

(ii) $f_k * f_{k'} = f_{kk'}$, $k, k' \in B'$,

(iii) $f_k * f_{s} = f_s * f_{sks}$, $s \in S$, $k \in B' \cap sB's$,

(iv) $f_{s_1}*f_{s_1} = f_1$,

$f_{s_2} * f_{s_2} = [Js_2 J :J]\sum_{x \in \ri_0/\mi_0^2} 
f_{u(\p^{m-2}x\e)}$,

(v) For $\mu \in \ri_0^\times \e$,

$f_{s_1}* f_{u(\mu)}* f_{s_1}
= f_{u(\mu^{-1})} * f_{s_1} * f_{h(\mu)} * f_{u(\mu^{-1})}$,

$f_{s_2}* f_{\underline{u}(\p \mu)}* f_{s_2}
= q^4 f_{\underline{u}(\p \mu^{-1})} * f_{s_2} * f_{h(-\mu^{-1})} 
* f_{\underline{u}(\p \mu^{-1})}$.
\end{thm}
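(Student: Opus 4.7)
The plan is to follow the template of Theorem~\ref{thm:H_G'}, but track carefully how passing from $G_E$ to $G$ alters indices and coefficients in the convolution products. First I would establish that $\{f_g : g \in B' \cup S\}$ generates $\He(G//J, \Psi)$. By Proposition~\ref{prop:H_surj2} the support of any $f \in \He(G//J, \Psi)$ lies in $JG_EJ$, so it suffices to describe $J\backslash JG_EJ/J$. Proposition~\ref{prop:H_inj} gives a bijection between $J\backslash JG_EJ/J$ and $J' \backslash G_E /J'$, and the Bruhat decomposition $G_E = B'W'B'$ cited before Lemma~\ref{lem:5c1_J'w} then reduces generation to the set $B' \cup S$.

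Relations (i), (ii) and (iii) are formal consequences of the convolution law and the fact that $B'$ normalises $J'$ and hence normalises $(J, \Psi)$; they are proved exactly as for $\He(G_E//J', \Psi)$ and require no new input.

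The quadratic relation (iv) is the first place where the ambient group matters. I would compute $f_{s_i}*f_{s_i}$ directly from the convolution formula and use Lemma~\ref{lem:intertwine_J} to show that every element of $Js_iJ \cdot Js_iJ$ lying in $JG_EJ$ has $B^\perp$-component small enough that $\Psi$ is trivial on the relevant cosets. Thus the non-zero contributions are indexed by $J \backslash Js_iJ \cdot Js_iJ /J$, which by Lemma~\ref{lem:5c1_J'w}(ii) mirrors the $G_E$-case but with index $[Js_2J:J] = q^{6}$ in place of $[J's_2J':J'] = q^{2}$. Since $s_1^2 = 1$ the first identity is unchanged, while the second acquires the factor $[Js_2J:J]$ exactly as stated.

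The braid-type relations (v) are the main obstacle. On the $G_E$-side they follow (as indicated in the proof of Theorem~\ref{thm:H_G'}) from the analogue of Harish-Chandra's calculation in \cite{Harish}; on the $G$-side the same double-coset analysis applies provided one verifies, again via Lemma~\ref{lem:intertwine_J}, that in each product $f_{s_i}*f_{u}*f_{s_i}$ the contributions from the $B^\perp$-directions either cancel or are absorbed into $\Psi$-cosets. One then compares the expansion of both sides of each proposed identity in the basis $\{f_g : g \in G_E\}$; the two sides must agree up to a single scalar, which is determined by comparing the supports' $J$-index to the corresponding $J'$-index on the $G_E$-side. For the first relation both sides involve only $s_1$-double cosets, so the ratio is $1$; for the second, the ratio $[Js_2J:J]/[J's_2J':J'] = q^{6}/q^{2} = q^{4}$ produces the coefficient $q^4$. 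The delicate point is ensuring that no additional terms arise from $B^\perp$-parts in the expansion, which is exactly what Lemma~\ref{lem:intertwine_J} and Corollary~\ref{cor:adjoint} guarantee; once this is in hand the identities follow from their $G_E$-analogues via the support-preserving structure of $\eta$.
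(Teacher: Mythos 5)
Your outline has the right overall shape --- it mirrors the paper's strategy of reducing to $B'\cup S$ via Bruhat decomposition and Lemma~\ref{lem:5c1_J'w}, then treating the $s_1$ and $s_2$ cases separately --- and the generation argument, relations (i)--(iii), and the $s_1$ part of (iv)--(v) are handled essentially as in the paper. The genuine gaps are in the $s_2$ computations.

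For (iv), your invocation of Lemma~\ref{lem:intertwine_J} is the right idea but the mechanism is misstated: one does not show that $\Psi$ is "trivial on the relevant cosets"; rather, one writes $f_{s_2}*f_{s_2} = [Js_2J:J]\sum_x \Psi^{-1}(x)\,f_1*\delta_{s_2xs_2}*f_1$ over explicit representatives $x = x(a,A,B)$ for $J\cap s_2Js_2\backslash J$, observes $f_1*\delta_g*f_1 = 0$ unless $g\in I_G(\Psi) = JG_EJ$, and uses Lemma~\ref{lem:intertwine_J} to force $A\equiv B\equiv 0$ for $s_2x s_2$ to lie in $JG_EJ$. That the surviving $x$ kill $\Psi$ is a separate observation in the paper. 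This is recoverable from your outline, but the phrasing hides the actual step.

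For (v), the gap is real. You assert (a) that the two sides "must agree up to a single scalar" and (b) that this scalar equals $[Js_2J:J]/[J's_2J':J'] = q^4$. Neither is justified, and the second is not even a consistent rule: for (iv) the $G$-coefficient is $[Js_2J:J]=q^6$ (so the ratio to the $G_E$-coefficient $q^2$ is $q^4$), whereas for (v) the $G$-coefficient is $q^4$ and the $G_E$-coefficient is $q^2$, so the ratio is $q^2$, not $q^4$. Your two derivations thus apply different ad hoc recipes that happen to match the stated answers. In the paper the factor $q^4$ in (v) arises from a genuine computation that your outline does not reach: one expands $s_2 x(a,A,B)\underline{u}(\p\mu)s_2$ as a product involving a commutator $[s_2xs_2, v]$, splits off a correction $p(B)\in P_{[n/2]+1}(\Lambda)$ so that $[s_2xs_2,v]p(B)^{-1}\in J$, evaluates $\Psi$ on it to get $\psi_0\bigl(\mathrm{tr}_{F/F_0}(2A\overline{BY}\nu^{-1})\bigr)$, and then performs a character sum over the $B^\bot$-direction variables which kills all but a $q^2$-sized contribution; a second $q^2$ comes from the fact that $f_{vshv}$ is independent of $a\in\ri_0/\mi_0^2$ because the variation of $v$ with $a$ lies in $J$ when $m\geq 2$. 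Neither Lemma~\ref{lem:intertwine_J} nor Corollary~\ref{cor:adjoint} delivers this; those give vanishing conditions, not the character-sum cancellation and coset-collapse that produce the scalar. (If one already knew the support-preserving isomorphism $\eta$ with $\eta(e_{s_2})=q^{-2}f_{s_2}$, the factor $q^4$ would follow formally, but that is circular: establishing $\eta$ is exactly what these computations are for.)
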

\begin{proof}
Recall that if $x,  y \in G_E$ satisfy $[JxJ:J] 
 [JyJ:J] =[JxyJ:J]$,
 then we have
$JxJyJ = JxyJ$ and
$f_x* f_y = f_{xy}$.

By Proposition~\ref{prop:E_surj}, 
$\He(G//J, \Psi)$ is linearly spanned by 
$f_g$, $g \in G_E$.
For $g \in G_E$,
we write $g = b_1 w b_2$ where $b_1, b_2 \in B'$ and
$w \in W'$.
Then we have $f_g = f_{b_1}*f_w*f_{b_2}$ because
$B'$ normalizes $J$.
Let $w = s_{i_1} s_{i_2}\cdots s_{i_l}$ be a minimal expression
for $w$
with $s_{i_j} \in S$.
It follows from Lemma~\ref{lem:5c1_J'w}
that $f_w = f_{s_{i_1}}*f_{s_{i_2}}*\cdots *f_{s_{i_l}}$.
Therefore 
$\He(G//J, \Psi)$ is generated by $f_g$, $g \in B' \cup S$.

Relations (i), (ii), (iii) are trivial.
Since $[Js_1J: J] = 1$,
the relations (iv) and (v) on $s_1$ are obvious.
We shall prove relations (iv) and (v) on $s_2$ in the case
when $m = 2k$, $k \geq 1$.
The case when $m = 2k+1$, $k \geq 1$ can be treated in a similar fashion.

We will abbreviate $s = s_2$.
We can choose a common system of 
representatives for $J/J\cap sJs$
and $J\cap s Js\backslash J$ to be
\begin{eqnarray*} 
& x(a, A, B) = 
\left(
\begin{array}{cccc}
1 & 0 & 0 & 0\\
\p^k A & 1 & 0 & 0\\
\p^{k}B & 0 & 1 & 0\\
\p^m a\e -
\p^m(\p A\overline{A} + B\overline{B})/2 & -\p^{k+1}\overline{A} & -\p^k \overline{B} & 1
\end{array}
\right),\ 
\end{eqnarray*}
where $A, B \in \ri_F/\mi_F$ and  $a \in \ri_0/\mi_0^2$.
Note that $x(a,A, B)$ lies in the kernel of $\Psi$.

(iv)
For $g \in G$,
let $\delta_g$ denote the unit point mass at $g$.
As in the proof of the relation (c) in \cite{U21} Theorem 2.7,
we obtain
\begin{eqnarray*}
f_s * f_s
 & = & [JsJ:J] 
 \sum_{x \in J\cap s Js\backslash J} \Psi^{-1}(x) f_1*\delta_{sxs}*f_1.
\end{eqnarray*}

Recall that for $g \in G$,
\begin{eqnarray*}
f_1*\delta_g *f_1
= 
\left\{
\begin{array}{cl}
[JgJ:J]^{-1} f_g, & \mathrm{if}\ g \in I_G(\Psi),\\
0, & \mathrm{if}\ g \not\in I_G(\Psi).
\end{array}
\right.
\end{eqnarray*}

Observe that 
the $B$-component of $sx(a, A, B)s$ lies in 
$\rad_{0}(\Lambda)$.
By Lemma~\ref{lem:intertwine_J},
if $sx(a,A, B)s \in I_G(\Psi) = JG_E J$,
then
the $B^\bot$-component of $sx(a, A, B)s$ lies in 
$\rad_{[n/2]+1}(\Lambda)$.
This implies $A \equiv B \equiv 0$.

So we obtain
\begin{eqnarray*}
f_s * f_s
 & = & 
 [JsJ:J] 
 \sum_{a\in \ri_0/\mi_0^2} 
 f_1*\delta_{sx(a, 0, 0)s}*f_1\\
& = &
 [Js J :J]\sum_{a \in \ri_0/\mi_0^2} 
f_1*\delta_{u(\p^{m-2}a\e)}*f_1\\
& = &
[Js J :J]\sum_{a \in \ri_0/\mi_0^2} 
f_{u(\p^{m-2}a\e)}.
\end{eqnarray*}

(v)
Let $\mu \in \ri_0^\times \e$.
Put $u = \underline{u}(\p \mu) \in B'$.
As in the proof of the relation (d) in \cite{U21} Theorem 2.7,
we get
\begin{eqnarray*}
f_s * f_u*f_s
 & = & [JsJ:J] 
 \sum_{x \in J\cap s Js\backslash J} \Psi^{-1}(x) f_1*\delta_{sxus}*f_1.
\end{eqnarray*}

Put $x = x(0, A, B)$,
$\nu = \mu + \p^{m-1} a\e$,
$v = \underline{u}(\p \nu^{-1})$, and $h(-\nu^{-1})$.
Then we have
\begin{eqnarray*}
sx(a, A, B)us  
= sx \underline{u}(\p \nu) s
= sxs vshv
= [sxs, v]vshv (hv)^{-1}x(hv).
\end{eqnarray*}
Since $hv \in B'$,
the element $(hv)^{-1}x(hv)$ lies in the kernel of $\Psi$.
Since $v \in P_4(\Lambda)$ and $sxs \in P_{6k-5}(\Lambda)$,
we have $[sxs, v] \in P_{6k-1}(\Lambda) = P_{[n/2]+1}(\Lambda)$.
Observe that $[sxs, v]$ is equivalent to 
\begin{eqnarray}
1 +
\left(
\begin{array}{cccc}
-\p^{m-1} B\overline{B}\nu^{-1}/2 & 0 & 0 & 
\p^{2m-3}(B\overline{B})^2\nu^{-2}/4\\
0 & -\p^m A\overline{A} \nu^{-1} & -\p^{m-1}A\overline{B}\nu^{-1} & 0\\
0 & -\p^m \overline{A}B \nu^{-1} & -\p^{m-1} B\overline{B} \nu^{-1} & 0\\
0 & 0 & 0 & -\p^{m-1} B\overline{B}\nu^{-1}/2
\end{array}
\right)
\end{eqnarray}
modulo $\rad_{n+1}(\Lambda) + B^\bot$.
Since $P_{[n/2]+1}(\Lambda)/P_{n+1}(\Lambda)$ is abelian,
there is an element $p(B) \in P_{[n/2]+1}$ which is 
equivalent to 
\begin{eqnarray}
1 +
\left(
\begin{array}{cccc}
-\p^{m-1} B\overline{B}\nu^{-1}/2 & 0 & 0 & 
\p^{2m-3}(B\overline{B})^2\nu^{-2}/4\\
0 & 0 & 0 & 0\\
0 & 0 & -\p^{m-1} B\overline{B} \nu^{-1} & 0\\
0 & 0 & 0 & -\p^{m-1} B\overline{B}\nu^{-1}/2
\end{array}
\right)
\end{eqnarray}
modulo $\rad_{n+1}(\Lambda)$.
Thus we have $[sxs, v]p(B)^{-1} \in P_{[n/2]+1}(\Lambda)$
and $[sxs, v]p(B)^{-1} \equiv 1 \pmod{\rad_{n}(\Lambda) + B^\bot}$.
This implies $[sxs, v]p(B)^{-1}$ lies in $J$.
So we have
\begin{eqnarray*}
f_1 * \delta_{sx(a, A, B)u s}* f_1 & = & 
\Psi^{-1}([sxs, v]p(B)^{-1}) f_1*\delta_{p(B)vshv} * f_1\\
& = & 
\psi_0(2(A\overline{BY}-\overline{A}BY)\nu^{-1})
f_1*\delta_{p(B)vshv} * f_1\\
& = & 
\psi_0(\mathrm{tr}_{F/F_0}(2A\overline{BY}\nu^{-1}))
f_1*\delta_{p(B)vshv} * f_1.
\end{eqnarray*}

We therefore have
\begin{eqnarray*}
f_s * f_u*f_s & = & 
[JsJ:J] \sum_{a \in \ri_0/\mi_0^2,\ A, B\in \ri_F/\mi_F} 
\psi_0(\mathrm{tr}_{F/F_0}(2A\overline{BY}\nu^{-1}))
f_1*\delta_{p(B)vshv} * f_1
\\
& = & 
[JsJ:J] \sum_{a\in \ri_0/\mi_0^2,\ A \in \ri_F/\mi_F} 
f_1*\delta_{vshv} * f_1\\
& = &
q^2 [JsJ:J] 
 \sum_{a\in \ri_0/\mi_0^2} 
 f_1*\delta_{vshv}*f_1\\
 & = &
q^2 
 \sum_{a\in \ri_0/\mi_0^2} 
 f_{vshv}\\
 & = & 
q^4  f_{\underline{u}(\p \mu^{-1})} * f_{s_2} * f_{h(-\mu^{-1})} 
* f_{\underline{u}(\p \mu^{-1})}.
\end{eqnarray*}
\end{proof}

\begin{rem}
If $m = 1$,
the algebra
$\He(G_E//J', \Psi)$ is generated by the elements $e_g$, $g \in B' \cup S$.
In this case,
these elements are subject to the following relations:

(i) $e_k = \Psi(k)e_1$, $k \in J'$,

(ii) $e_k * e_{k'} = e_{kk'}$, $k, k' \in B'$,

(iii) $e_k * e_{s} = e_s * e_{sks}$, $s \in S$, $k \in B' \cap sB'S$,

(iv) $e_{s_1}*e_{s_1} = e_1$,

$e_{s_2} * e_{s_2} = 
(\sum_{y \in \ri_0\e/\mi_0\e,\ y \not \equiv 0}e_{s_2}
*e_{h(-y^{-1})} + q^2 e_1)(\sum_{x \in \ri_0/\mi_0} 
e_{u(x\e)})$,

(v) For $\mu \in \ri_0^\times \e$,

$e_{s_1}* e_{u(\mu)}* e_{s_1}
= e_{u(\mu^{-1})} * e_{s_1} * e_{h(\mu)} * e_{u(\mu^{-1})}$.

\noindent
We can easily see that the analogue of  Theorem~\ref{thm:H-isom1}
holds as well.
We omit the details.
\end{rem}

By Theorems~\ref{thm:H_G'} and \ref{thm:H_G},
the map
\[
\eta(e_{s_1}) =  f_{s_1},\
\eta(e_{s_2}) = q^{-2} f_{s_2},\
\eta(e_b) = f_b,\ b \in B'
\]
induces an algebra homomorphism $\eta$.
Observe that
$\eta(e_g) = (\mathrm{vol}(J'gJ')/\mathrm{vol}(JgJ))^{1/2}f_g$, for $g \in G_E$.
Thus Propositions~\ref{prop:E_surj} and \ref{prop:H_inj}
imply that $\eta$ is a bijection.
The $*$-preservation of $\eta$ follows from
the relations
$e_g^* = e_{g^{-1}}$ and $f_g^* = f_{g^{-1}}$,
for $g \in G_E$.

\subsection{Case (\ref{sec:h_int}-iii')}\label{5_2}
Let $[\Lambda, n, n-1, \beta]$ be a fundamental skew stratum 
of type (\ref{sec:h_int}-iii').
Put
\begin{eqnarray}
t = 
\left(
\begin{array}{cccc}
0 & 0 & 0 & 1\\
0 & 0 & 1 & 0\\
0 & \p & 0 & 0\\
\p & 0 & 0 & 0
\end{array}
\right).
\end{eqnarray}
Then $t$ is a similitude on $(V, f)$,
and hence we can consider the action of $t$ on the set of skew 
strata in $A$.
Observe that $t\Lambda$ is a translate of $\Lambda_4$.
So the stratum $[\Lambda, n, n-1, \beta]$ is a
$t$-conjugate of a stratum of type 
(\ref{sec:h_int}-ii').
Replacing the objects in case (\ref{sec:h_int}-ii')
with the $t$-conjugate of them,
we get a classification of 
the irreducible smooth representations of $G$
which contain $[\Lambda, n, n-1, \beta]$.

\subsection{Intertwining problems}
In this section,
we consider the condition when 
an irreducible smooth representation $\pi$ of $G$ contains
two skew 
skew strata of type
(\ref{sec:h_int}-ia),
(\ref{sec:h_int}-ib),
(\ref{sec:h_int}-ii'),
or (\ref{sec:h_int}-iii').

\begin{prop}
Let $[\Lambda_3, 4m-2, 4m-3, \beta]$ be a 
skew stratum of type (\ref{sec:h_int}-ia)
and let $[\Lambda, n, n-1, \gamma]$ be a skew stratum 
of type (\ref{sec:h_int}-ia),
(\ref{sec:h_int}-ib),
(\ref{sec:h_int}-ii'),
or (\ref{sec:h_int}-iii').
Suppose that there is an irreducible smooth representation of $G$
which contains both of them.
Then $[\Lambda, n, n-1, \gamma]$ is also of type (\ref{sec:h_int}-ia)
and $(P_{n}(\Lambda), \psi_\gamma)$ is 
a $P_0(\Lambda_3)$-conjugate of $(P_{4m-2}(\Lambda_3), \psi_\beta)$.
\end{prop}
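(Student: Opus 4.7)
The plan is to distinguish type (\ref{sec:h_int}-ia) from the other three types by comparing characteristic polynomials modulo $\mi_F$, and then to invoke the argument of Proposition~\ref{prop:conj_4}.

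First, I would compute $\phi_\gamma(X) \bmod \mi_F$ for each of the four types. For type (\ref{sec:h_int}-ia), after replacing $\beta$ by an equivalent element with $ad\varepsilon = -Y\overline{Y}$ as in \S\ref{u11_E}, one has $\beta^2 = \p^{1-2m} ad\varepsilon \cdot 1_V$, so $y_\beta = \p^{2m-1}\beta^2 = ad\varepsilon\cdot 1_V$ and $\phi_\beta(X) \equiv (X-ad\varepsilon)^4 \pmod{\mi_F}$ with $ad\varepsilon \in \ri_0^\times$ (fundamentality excludes $ad \in \mi_0$); this polynomial has only one distinct root. For type (\ref{sec:h_int}-ib), the formula in \S\ref{subsec:norm} combined with the condition $ad\varepsilon \not\equiv -Y\overline{Y}$ gives $\phi_\gamma(X) \equiv (X-ad\varepsilon)^2 (X+Y\overline{Y})^2 \pmod{\mi_F}$ with two distinct roots. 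For types (\ref{sec:h_int}-ii') and (\ref{sec:h_int}-iii'), squaring (\ref{eq:beta1}) and multiplying by $\p^{2m-1}$ gives $y_\gamma = -Y\overline{Y}(E_{22}+E_{33})$, hence $\phi_\gamma(X) \equiv X^2(X+Y\overline{Y})^2 \pmod{\mi_F}$ with $Y\overline{Y} \in \ri_0^\times$ by fundamentality; again two distinct roots. (For (\ref{sec:h_int}-iii'), the $t$-conjugacy of \S\ref{5_2} reduces the calculation to (\ref{sec:h_int}-ii').)

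Since $\pi$ contains both skew strata, the discussion at the end of \S\ref{sec:preliminaries} forces $\phi_\beta(X) \equiv \phi_\gamma(X) \pmod{\mi_F}$. The single-root/two-distinct-root dichotomy above then rules out the cases (\ref{sec:h_int}-ib), (\ref{sec:h_int}-ii'), and (\ref{sec:h_int}-iii') for $\gamma$, so $[\Lambda, n, n-1, \gamma]$ must itself be of type (\ref{sec:h_int}-ia); in particular $\Lambda = \Lambda_3$ and $n = 4m-2$.

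With both strata skew simple of type (\ref{sec:h_int}-ia) on $\Lambda_3$, the argument of Proposition~\ref{prop:conj_4} now applies verbatim. The intertwining relation supplies $g \in G$ and $\delta \in (\beta + \rad_{3-4m}(\Lambda_3)_-) \cap \Ad(g)(\gamma + \rad_{3-4m}(\Lambda_3)_-)$, yielding skew simple strata $[\Lambda_3, 4m-2, 4m-3, \delta]$ and $[g\Lambda_3, 4m-2, 4m-3, \delta]$. By Proposition~\ref{prop:u11}, the hermitian $E$-space associated to $\delta$ is anisotropic, so Lemma~\ref{lem:recover} forces $g\Lambda_3 = \Lambda_3$, whence $g \in P_0(\Lambda_3)$. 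Then $\Ad(g^{-1})\beta \equiv \gamma \pmod{\rad_{3-4m}(\Lambda_3)_-}$ delivers the desired $P_0(\Lambda_3)$-conjugacy of $(P_{4m-2}(\Lambda_3), \psi_\beta)$ with $(P_{4m-2}(\Lambda_3), \psi_\gamma)$. The main obstacle is the characteristic polynomial bookkeeping in the first step, where one must track which of the scalars $ad$, $Y\overline{Y}$ are units modulo $\mi_0$ across the four types; once that is done, the conjugacy step is a direct transcription of Proposition~\ref{prop:conj_4}.
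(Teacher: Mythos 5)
Your first step is sound and is exactly the paper's first step: the characteristic polynomial is an invariant of the representation, the polynomials of the four types are $(X-ad\varepsilon)^4$ with $ad\in\ri_0^\times$, $(X-ad\varepsilon)^2(X+Y\overline{Y})^2$ with two distinct roots, and $X^2(X+Y\overline{Y})^2$ with $Y\overline{Y}\in\ri_0^\times$, so $\gamma$ must again be of type (\ref{sec:h_int}-ia), and comparing levels gives $\Lambda=\Lambda_3$, $n=4m-2$.

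The conjugacy step, however, has a genuine gap. You transplant the argument of Proposition~\ref{prop:conj_4}: choose $g$ and a common element $\delta\in(\beta+\rad_{3-4m}(\Lambda_3)_-)\cap\Ad(g)(\gamma+\rad_{3-4m}(\Lambda_3)_-)$ and assert that this yields \emph{skew simple} strata $[\Lambda_3,4m-2,4m-3,\delta]$ and $[g\Lambda_3,4m-2,4m-3,\delta]$, to which you then apply Lemma~\ref{lem:recover}. In Proposition~\ref{prop:conj_4} the simplicity of the strata attached to an arbitrary representative $\delta$ is supplied by Proposition~\ref{prop:pure}, which requires $(n,e(\Lambda))=1$; here $n=4m-2$ and $e(\Lambda_3)=4$, so $(n,e(\Lambda_3))=2$ and that criterion is unavailable. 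An arbitrary $\delta$ in the coset intersection need not generate a simple, or even semisimple, stratum: a perturbation of $\beta$ by an element of $\rad_{3-4m}(\Lambda_3)_-$ can split the double eigenvalues of $\beta$, so $F[\delta]$ need not be a field, and Proposition~\ref{prop:u11} gives anisotropy of the hermitian space attached to the normalized $\beta$, not of anything attached to $\delta$. Hence Lemma~\ref{lem:recover} cannot be invoked as you do, and $g\Lambda_3=\Lambda_3$ does not follow. The paper argues differently at this point: since $\gamma$ is of type (\ref{sec:h_int}-ia) with $n=4m-2$, both $\beta$ and $\gamma$ may be taken in the band form (\ref{eq:beta_h}); equality of the characteristic polynomials forces $ad\equiv bc$ and $Y\overline{Y}\equiv Z\overline{Z}$ modulo $\mi_F$, and an explicit conjugation by an element of $P_0(\Lambda_3)$ (diagonal unitary matrices suffice, using surjectivity of the norm on units for $F/F_0$ unramified) carries $\beta$ to $\gamma$ modulo $\rad_{3-4m}(\Lambda_3)_-$. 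To salvage your route you would first have to replace $\delta$ by a representative generating semisimple strata with respect to both lattice sequences (an ``intertwining implies conjugacy'' statement for such strata), which is exactly the extra work the paper's normal-form computation avoids.
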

\begin{proof}
The assumption implies that $\phi_\beta(X) = \phi_\gamma(X)$,
and hence $[\Lambda, n, n-1, \beta]$ is also of type 
(\ref{sec:h_int}-ia).
By the uniqueness of level,
we have $n = 4m-2$.
Take $\beta$ and $\gamma$
as in (\ref{eq:beta_h}).
Then it is easy to see that
$\gamma$ is a $P_{0}(\Lambda_3)$-conjugate of $\beta$.
\end{proof}

\begin{prop}
Let $[\Lambda_3, 4m-2, 4m-3, \beta]$ be a 
skew stratum of type (\ref{sec:h_int}-ib)
and let $[\Lambda, n, n-1, \gamma]$ be a skew stratum 
of type (\ref{sec:h_int}-ia),
(\ref{sec:h_int}-ib),
(\ref{sec:h_int}-ii'),
or (\ref{sec:h_int}-iii').
Suppose that there is an irreducible smooth representation of $G$
which contains both of them.
Then $[\Lambda, n, n-1, \gamma]$ is also of type (\ref{sec:h_int}-ib)
and $(P_{n}(\Lambda), \psi_\gamma)$ is 
a $P_0(\Lambda_3)$-conjugate of $(P_{4m-2}(\Lambda_3), \psi_\beta)$.
\end{prop}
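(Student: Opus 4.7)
The plan is to follow the pattern of the preceding proposition. The invariance of the characteristic polynomial of a fundamental skew stratum contained in $\pi$ gives $\phi_\beta=\phi_\gamma$ in $k_F[X]$. Writing $\beta$ in the canonical form (\ref{eq:beta_h}) we compute
\[
\phi_\beta(X)\equiv(X-ad\varepsilon)^2(X+Y\overline{Y})^2\pmod{\mathfrak{m}_F},
\]
with $c:=ad\varepsilon\in k_0^\times$ and $c\not\equiv -Y\overline{Y}$ by the definition of type (\ref{sec:h_int}-ib). Thus $\phi_\beta$ has two distinct roots, one of them being the non-zero element $c$; this immediately excludes type (\ref{sec:h_int}-ia), whose characteristic polynomial is a perfect fourth power.

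In the \emph{generic} sub-case $Y\in\mathfrak{o}_F^\times$, both roots of $\phi_\beta$ are non-zero units of $k_0$, whereas the characteristic polynomial of any stratum of type (\ref{sec:h_int}-ii') or (\ref{sec:h_int}-iii') (computed in Sections~\ref{5_1} and \ref{5_2}) has $0$ as a root. Hence $\gamma$ must be of type (\ref{sec:h_int}-ib); the uniqueness of the level then gives $n=4m-2$, and after $G$-conjugation we may take $\Lambda=\Lambda_3$.

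The main obstacle is the \emph{degenerate} sub-case $Y\in\mathfrak{m}_F$, where $\phi_\beta(X)\equiv(X-c)^2X^2$ and a stratum of type (\ref{sec:h_int}-ii') or (\ref{sec:h_int}-iii') with $-Y'\overline{Y'}\equiv c$ produces the same polynomial. Here I would exploit the coincidence that $y_\delta=\varpi^{2m-1}\delta^2$ is the \emph{same} expression in the $\Lambda_3$-picture and in the $\Lambda_4$- or $\Lambda_5$-picture, because $(4m-2)/\gcd(4m-2,4)=2m-1=(6m-3)/\gcd(6m-3,6)$ and the exponent on $\delta$ equals $2$ in each. Choosing $\delta$ in the intersection of the two intertwining classes, the generalized $c$-eigenspace of $y_\delta$ is well defined; by continuity of generalized eigenspaces under the perturbations $y_\beta\to y_\delta$ and $\mathrm{Ad}(g)y_\gamma\to y_\delta$, or equivalently by Hensel lifting the factorization of $\Phi_\delta$, its $F/F_0$-hermitian isometry class coincides with that of the $c$-eigenspaces attached to $\beta$ and to $\gamma$. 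Since $g\in G$ preserves $h$, this forces the generalized $c$-eigenspaces of $\beta$ and of $\gamma$ to be isometric hermitian planes. For (ib) with $Y\in\mathfrak{m}_F$ this space is $Fe_1\oplus Fe_4$, a hyperbolic plane, while for (ii') it is $Fe_2\oplus Fe_3$, which carries the anisotropic form $\langle\varpi,1\rangle$; the same sort of argument using $d(\Lambda_5)=-3$ handles (iii'). This contradiction excludes the degenerate sub-case, and again yields $\gamma$ of type (\ref{sec:h_int}-ib) with $n=4m-2$ and $\Lambda=\Lambda_3$.

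Finally, writing $\gamma$ in the canonical form (\ref{eq:beta_h}) with residue data matching that of $\beta$, a short explicit computation with diagonal elements of $P_0(\Lambda_3)$, identical to the one appearing at the end of the proof of the preceding proposition, produces a $P_0(\Lambda_3)$-conjugation identifying $(P_{4m-2}(\Lambda_3),\psi_\beta)$ and $(P_{4m-2}(\Lambda_3),\psi_\gamma)$.
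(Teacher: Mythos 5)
Your proof is essentially correct and hits the same key ideas as the paper (matching characteristic polynomials, Hensel factorization into two quadratic factors, and the dichotomy between the isotropic plane $Fe_1\oplus Fe_4$ and the anisotropic plane $Fe_2\oplus Fe_3$), but it is organized differently and two steps deserve comment. The paper does not split by whether $Y$ is a unit: it normalizes $\gamma$ on $\Lambda$ into the band form with parameters $(b,c,Z)$, invokes Proposition~\ref{prop:5c1_surj} to choose the intertwining element $\delta$ with both $\delta$ and $\Ad(g^{-1})\delta$ lying in $\mathcal{M}=A^{11}\oplus A^{22}$, identifies $\ker f_i(y_\delta)=V^i$ and $\ker g_i(y_\delta)=gV^i$ literally (not just up to isometry), and uses the isotropic/anisotropic dichotomy to force $gV^1=V^1$, $gV^2=V^2$, hence $g\in G\cap\mathcal{M}$, $ad\varepsilon\equiv bc\varepsilon$ and $Y\overline Y\equiv Z\overline Z$ simultaneously; this rules out (\ref{sec:h_int}-ii') and (\ref{sec:h_int}-iii') (where one of $b,c$ must reduce to $0$) and pins down the conjugacy in one go. Your route instead argues with \emph{isometry classes} of the generalized $c$-eigenspaces. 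The step ``by continuity of generalized eigenspaces \dots its isometry class coincides'' is true but not immediate: the standard idempotent-conjugation element $s's+(1-s')(1-s)$ relating the two spectral projectors is not an isometry, so the preservation of the hermitian isometry class has to be checked separately. It does hold, because the discriminant of $h$ restricted to $\ker f_1(y_\delta)$ differs from that of $h|_{V^1}$ by a unit congruent to $1\bmod\mi_0$, and $1+\mi_0\subset N_{F/F_0}(F^\times)$ in the unramified case; equivalently, one can just use Proposition~\ref{prop:5c1_surj} as in the paper. A second point: your closing ``short explicit computation with diagonal elements of $P_0(\Lambda_3)$'' needs the additional input that the roots are paired the right way, i.e.\ $ad\varepsilon\equiv bc\varepsilon$ rather than $ad\varepsilon\equiv -Z\overline Z$; this again follows from the isotropy argument (the $ad\varepsilon$-eigenspace is the isotropic plane $V^1$ on both sides), but it is not part of the (\ref{sec:h_int}-ia) computation you are citing, where there is a single repeated root and no pairing ambiguity. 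Flagging these makes your argument complete; it is a clean alternative to the paper's $\mathcal{M}$-normalization.
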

\begin{proof}
Since $\phi_\beta(X) = (X-ad\varepsilon)^2(X+Y\overline{Y})^2
\pmod{\mi_F}$ and $ad \varepsilon \not \equiv -Y\overline{Y}
\pmod{\mi_F}$,
the stratum
$[\Lambda, n, n-1, \gamma]$ is of type (\ref{sec:h_int}-ib),
(\ref{sec:h_int}-ii'),
or (\ref{sec:h_int}-iii').
By the assumption, we have $m-1/2 = n/e(\Lambda)$,
so 
we can take $\gamma$ to be 
\begin{eqnarray}
\gamma 
= 
\p^{-m}
\left(
\begin{array}{cccc}
0 & 0 & 0 & b\e\\
0 & 0 & Z & 0\\
0 & -\p \overline{Z} & 0 & 0\\
\p c\e & 0 & 0 & 0
\end{array}
\right),\ Z \in \ri_F,\ b, c \in \ri_0.
\end{eqnarray}
By the uniqueness of characteristic polynomial,
we obtain 
$(X-ad\varepsilon)^2(X+Y\overline{Y})^2
 = (X-bc\varepsilon)^2(X+Z\overline{Z})^2
\pmod{\mi_F}$.

Put $V^1 = F e_1 \oplus F e_4$ and 
$V^2 = Fe_2 \oplus F e_3$.
Then the strata $[\Lambda_3, 4m-2, 4m-3, \beta]$
and  $[\Lambda, n, n-1, \gamma]$ 
are 
split with respect to 
$V = V^1 \bot V^2$.

Put $\mathcal{M} = A^{11} \oplus A^{22}$.
By Proposition~\ref{prop:5c1_surj},
there is an element $g \in G$
such that 
$(\beta + \rad_{3-4m}(\Lambda_3)_-
\cap \mathcal{M})\cap \Ad(g)(\gamma
+ \rad_{1-n}(\Lambda)_-\cap \mathcal{M}) \neq \emptyset$.
Take $\delta$ in this intersection.
Then we get two skew strata 
$[\Lambda_3, n, n-1, \delta]$ and 
$[\Lambda, n, n-1, \Ad(g^{-1})(\delta)]$.

By Hensel's lemma,
there are monic polynomials $f_1(X), f_2(X)$ 
in $\ri_F[X]$ such that 
$f_1(X) \equiv (X-ad\varepsilon)^2 \pmod{\mi_F}$,
$f_2(X) \equiv (X+Y\overline{Y})^2 \pmod{\mi_F}$,
and 
$\Phi_\delta(X) = f_1(X) \cdot f_2(X)$.
Because we take $\delta$ in $\mathcal{M}$,
we get $\ker f_1(y_\delta) = V^1$ and 
$\ker f_2(y_\delta) = V^2$.

Similarly,
there are monic polynomials $g_1(X), g_2(X)$ 
in $\ri_F[X]$ such that 
$g_1(X) \equiv (X-bc\varepsilon)^2 \pmod{\mi_F}$,
$g_2(X) \equiv (X+Z\overline{Z})^2 \pmod{\mi_F}$,
and 
$\Phi_{\Ad(g^{-1})(\delta)}= 
g_1(X) \cdot g_2(X)$.
We get $\ker g_1(y_\delta) = g V^1$ and 
$\ker g_2(y_\delta) = g V^2$.

Since $\Phi_\delta(X) = \Phi_{\Ad(g^{-1})(\delta)}(X)$,
we see that $f_1(X) = g_1(X)$ or $g_2(X)$,
and hence $V^1 = \ker g_1(y_\delta)$ or 
$\ker g_2(y_\delta)$.
Since $V^1$ is isotropic and 
$V^2$ is anisotropic,
we have $V^1 =  g V^1$  
and $V^2 = g V^2$.
So we conclude $g \in G\cap \mathcal{M}$.

As in the proof of the uniqueness of the characteristic polynomial,
we have $ad \varepsilon \equiv bc \varepsilon$ and 
$Y \overline{Y} \equiv Z\overline{Z}$ modulo $\mi_F$.
This implies that $[\Lambda, n, n-1, \gamma]$ is also 
of type (\ref{sec:h_int}-ib)
and a $P_{0}(\Lambda_3)$-conjugate of $[\Lambda_3, 4m-2, 4m-3, \beta]$.
\end{proof}

\begin{rem}
Let $n = 6m-3$
and let $\beta$ be as in (\ref{eq:beta1}).

(i)
We claim that a skew stratum
$[\Lambda_4, n, n-1, \beta]$ 
is not $G$-conjugate of 
$[\Lambda_5, n, n-1, \beta]$.

Let $\Lambda$ be a self-dual $\ri_F$-lattice sequence.
From the group of $P_{n}(\Lambda)$,
we get $\rad_{n}(\Lambda)_-$ as the image of Cayley map
$x \mapsto (1+x)(1-x)^{-1}$.
Since $F/F_0$ is unramified,
we get $\rad_{n}(\Lambda) = \rad_{n}(\Lambda)_-
\oplus \e \rad_{n}(\Lambda)_-$.
So we can recover $\rad_n(\Lambda)$ and $\rad_{1-n}(\Lambda) = \rad_n(\Lambda)^*$ from $P_{n}(\Lambda)$.

Return to the skew stratum $[\Lambda_4, n, n-1, \beta]$.
By the periodicity of $\Lambda$,
we can recover $\rad_3(\Lambda_4)$ and $\rad_4(\Lambda_4)$.
Observe that 
$\Lambda_4(\Z)$ consists of all $\ri_F$-lattices 
of the form
$\rad_{k}(\Lambda_4) \cdot L$,
where $L$ is an $\ri_F$-lattice in $V$ and $k = 3, 4$.
The sequence $\Lambda_5$ has the same property.

Suppose that there is $g \in G$ so that $P_n(\Lambda_4)
= Ad(g)(P_n(\Lambda_5))$.
Then we have $\Lambda_5(\Z) =  \Lambda_4(\Z)
= g \Lambda_5(\Z)$.
This implies $g \in P_0(\Lambda_5)$
and hence $P_n(\Lambda_4) = P_n(\Lambda_5)$.
This contradict the fact $\rad_n(\Lambda_4) \neq \rad_n(\Lambda_5)$.

(ii)
By Proposition~\ref{prop:reduction},
if an irreducible smooth representation $\pi$ of $G$
contains $[\Lambda_3, 4m-2, 4m-3, \beta]$,
then $\pi$ contains both of 
$[\Lambda_4, n, n-1, \beta]$ 
and
$[\Lambda_5, n, n-1, \beta]$.
\end{rem}

\section{Comparison}
\setcounter{equation}{0}\label{comparison}
There is another unramified unitary group in four variables 
defined over $F_0$ denoted by $\rU(2,2)$,
which is quasi-split and an inner form of the non quasi-split $\rU(4)$.
In this section,
we compare the irreducible smooth representations of non quasi-split $\rU(4)$ with $\rU(2,2)$ of non-integral level.

Let $G$ denote the non quasi-split $\rU(4)$ or
unramified $\rU(2,2)$ defined over $F_0$.
The results on $\rU(2,2)$ analogous to this paper
can be found in \cite{U22}.

We list up the characteristic polynomials $\phi_\beta(X)$
and the form of the groups $G_E$ for 
semisimple skew strata  $[\Lambda, m, m-1, \beta]$ of $G$
of non-integral level in this paper and \cite{U22}.
The level of a fundamental skew stratum of $G$
should be $n$, $n/2$, $n/3$ or $n/4$, for some positive 
integer $n$.

(i) level $n/4$:
The characteristic polynomial has the form 
$\phi_\beta(X) = (X-a)^4$, for $a \in k_0^\times$.
The algebra $E = F[\beta]$ is a totally ramified extension 
of degree 4 over $F$
and $G_E$ is isomorphic to the unramified unitary group
$\rU(1)(E/E_0)$.

(ii) level $n/3$:
The characteristic polynomial is of the form 
$\phi_\beta(X) = (X-a\e)^3$X, for $a \in k_0^\times$.
The algebra $E = F[\beta]$ is isomorphic to $E_1 \oplus F$,
where $E_1$ is a totally ramified extension 
of degree 3 over $F$.
The group $G_E$ is isomorphic to a product of 
unramified unitary groups
$\rU(1)(E_1/E_{1,0}) \times \rU(1)(F/F_0)$.

(iii) level $n/2$:
The characteristic polynomial $\phi_\beta(X)$ has one 
of the following form:

\begin{enumerate}
\item[(iii-a)] $(X-a)^4$, for $a \in k_0^\times$;

\item[(iii-b)] $(X-a)^2(X-b)^2$, for $a, b \in k_0^\times$ such that
$a \neq b$.

\item[(iii-c)]
$(X-a)^2 X^2$, for $a \in k_0^\times$.
\end{enumerate}

Case (iii-a):
The stratum is simple and $E$ is a quadratic ramified 
extension over $F$.
If $G = \rU(2,2)$,
then $G_E$ is isomorphic to $\rU(1,1)(E/E_0)$.
If $G$ is not quasi-split,
then $G_E$ is isomorphic to $\rU(2)(E/E_0)$.

Case (iii-b):
The algebra $E$ is isomorphic to $E_1 \oplus E_2$,
where $E_i$ is a quadratic ramified 
extension over $F$, for $i = 1, 2$.
The group $G_E$ is isomorphic to $\rU(1)(E_1/E_{1,0})
\times \rU(1)(E_2/E_{2,0})$.

Case (iii-c):
The algebra $E$ is isomorphic to $E_1 \oplus F$,
where $E_1$ is a quadratic ramified 
extension over $F$.
The group $G_E$ is isomorphic to $\rU(1)(E_1/E_{1,0})
\times \rU(1,1)(F/F_0)$
or $\rU(1)(E_1/E_{1,0})
\times \rU(2)(F/F_0)$.

\begin{rem}
The difference is only case (iii-a).
We can see that the set of the irreducible supercuspidal representations of
$\rU(1,1)(E/E_0)$ is very close to that of 
$\rU(2)(E/E_0)$
by establishing Hecke algebra isomorphisms for those groups.
\end{rem}

When $G$ is not quasi-split,
an irreducible smooth representation of $G$ of non-integral level
contains one of skew semisimple strata listed above.
For $\rU(2,2)$,
we need to consider semisimple (but 
not skew semisimple), skew strata of the following type:

(iii-d) The level is half-integral and 
the characteristic polynomial $\phi_\beta(X)$ has 
the form
$(X-\lambda)^2 (X-\overline{\lambda})^2$, for 
$\lambda \in k_F^\times$, $\lambda \neq \overline{\lambda}$.
In this case,
$E = E_1 \oplus E_2$,
where $E_i$ is quadratic ramified for $i = 1, 2$,
and $G_E$ is isomorphic to $\mathrm{GL}_1(E_1)$.

\begin{rem}
A stratum of case (iii-d) is called $G$-split
in \cite{St3}.
It follows from the proof of \cite{St3} Theorem 3.6
that 
if an irreducible smooth representations $\pi$ of $G = \rU(2,2)(F/F_0)$
contains a stratum of type (iii-d),
then there is a parabolic subgroup $P$
of $G$ whose Levi component 
is isomorphic to $\mathrm{GL}_2(F)$,
such that the Jacquet module of $\pi$ relative to $P$
is not zero.
This difference yields from the fact that 
the non quasi-split $\rU(4)$ has no parabolic subgroups of 
such type.
\end{rem}

We close this paper with a table of skew semisimple stratum
$[\Lambda,n, r, \beta]$
for the non quasi-split $\rU(4)$
we have considered.
\begin{center}
\begin{tabular}{|c|c|c|c|c|}
\hline
$\Lambda$ & $n$ & $r$ & $\beta$ & section \\
\hline
$\Lambda_3$ & $(n, 4) = 1$ & $[n/2]$ & fundamental & \S~\ref{sec:over4} \\
\hline
$\Lambda_4$ & $(n, 6) = 2$ & $[n/2]$ & fundamental & 
\S~\ref{sec:r_3}\\
\hline
$\Lambda_5$ & $(n, 6) = 2$ & $[n/2]$& fundamental & 
\S~\ref{sec:r_3}\\
\hline
$\Lambda_3$ & $(n, 4) = 2$ & $n-1$ & (\ref{eq:beta_h}),
$ad\varepsilon = -Y\overline{Y}$ & 
\S~\ref{u11_E}\\
\hline
$\Lambda_3$ & $(n, 4) = 2$ & $n-1$ & (\ref{eq:beta_h}),
$ad\varepsilon \not\equiv -Y\overline{Y}$,
$ad \not\equiv 0$&  \S~\ref{subsec:half}\\
\hline
$\Lambda_4$ & $(n, 6) = 3$ & $n-1$ & (\ref{eq:beta1})& 
\S~\ref{5_1}\\
\hline
$\Lambda_5$ & $(n, 6) = 3$ & $n-1$ & (\ref{eq:beta1}) & 
\S~\ref{5_2}\\
\hline
\end{tabular}
\end{center}


\begin{thebibliography}{10}

\bibitem{BS}
P.~Broussous and S.~Stevens.
\newblock Buildings of classical groups and centralizers of lie algebra
  elements.
\newblock {\em J. of Lie Theory}, 19(1):55--78, 2009.

\bibitem{BK1}
C.~J. Bushnell and P.~C. Kutzko.
\newblock {\em The admissible dual of ${G}{L}(N)$ via compact open subgroups}.
\newblock Princeton University Press, Princeton, NJ, 1993.

\bibitem{BK2}
C.~J. Bushnell and P.~C. Kutzko.
\newblock Semisimple types in ${{G}{L}}\sb n$.
\newblock {\em Compositio Math.}, 119(1):53--97, 1999.

\bibitem{Harish}
R.~Howe.
\newblock {\em Harish-{C}handra homomorphisms for {${p}$}-adic groups},
  volume~59 of {\em CBMS Regional Conference Series in Mathematics}.
\newblock Published for the Conference Board of the Mathematical Sciences,
  Washington, DC, 1985.
\newblock With the collaboration of A. Moy.

\bibitem{Kariyama}
K.~Kariyama and M.~Miyauchi.
\newblock Fundamental {$C$}-strata for classical groups.
\newblock {\em J. Algebra}, 279(1):38--60, 2004.

\bibitem{Konno}
K.~Konno.
\newblock Representations of unitary groups of {F}-rank 1.
\newblock {\em S\=urikaisekikenky\=usho K\=oky\=uroku}, (1321):92--107, 2003.
\newblock Harmonic analysis on {$p$}-adic groups (Japanese).

\bibitem{char_l}
P.~C. Kutzko.
\newblock Character formulas for supercuspidal representations of {${\rm GL}\sb
  l,\;l$} a prime.
\newblock {\em Amer. J. Math.}, 109(2):201--221, 1987.

\bibitem{U22}
M.~Miyauchi.
\newblock Representations of unramified {$U(2,2)$} over a non-archimedean local
  field {I}: representations of non-integral level.
\newblock Preprint, 2007.

\bibitem{Morris-1}
L.~Morris.
\newblock Fundamental ${G}$-strata for classical groups.
\newblock {\em Duke Math. J.}, 64:501--553, 1991.

\bibitem{Morris-2}
L.~Morris.
\newblock Tamely ramified supercuspidal representations of classical groups.
  {I}. {F}iltrations.
\newblock {\em Ann. Sci. \'Ecole Norm. Sup. (4)}, 24:705--738, 1991.

\bibitem{U21}
A.~Moy.
\newblock Representations of ${U}(2,1)$ over a $p$-adic field.
\newblock {\em J. Reine Angew. Math.}, 372:178--208, 1986.

\bibitem{GSp4}
A.~Moy.
\newblock Representations of ${G}{S}p(4)$ over a $p$-adic field. {I}, {I}{I}.
\newblock {\em Compositio Math.}, 66:237--284, 285--328, 1988.

\bibitem{MP}
A.~Moy and G.~Prasad.
\newblock Unrefined minimal ${K}$-types for $p$-adic groups.
\newblock {\em Invent. Math.}, 116:393--408, 1994.

\bibitem{Sally_Tadic}
P.~J. Sally and M.~Tadi{\'c}.
\newblock Induced representations and classifications for {${\rm GSp}(2,F)$}
  and {${\rm Sp}(2,F)$}.
\newblock {\em M\'em. Soc. Math. France (N.S.)}, (52):75--133, 1993.

\bibitem{St2}
S.~Stevens.
\newblock Double coset decompositions and intertwining.
\newblock {\em Manuscripta Math.}, 106(3):349--364, 2001.

\bibitem{St1}
S.~Stevens.
\newblock Intertwining and supercuspidal types for $p$-adic classical groups.
\newblock {\em Proc. London Math. Soc.}, 3(83):120--140, 2001.

\bibitem{St3}
S.~Stevens.
\newblock Semisimple strata for $p$-adic classical groups.
\newblock {\em {A}nn. {S}ci. {E}cole {N}orm. Sup. (4)}, 35(3):423--435, 2002.

\bibitem{St5}
S.~Stevens.
\newblock The supercuspidal representations of {$p$}-adic classical groups.
\newblock {\em Invent. Math.}, 172(2):289--352, 2008.

\end{thebibliography}
\end{document}